\newtheorem{theorem}{Theorem}[section]
\newtheorem{Counter-example}[theorem]{Counter example}
\newtheorem{Claim}[theorem]{Claim}
\newtheorem{Lemma}[theorem]{Lemma}
\newtheorem{Proposition}[theorem]{Proposition}
\newtheorem{Definition}[theorem]{Definition}
\newtheorem{Corollary}[theorem]{Corollary}
\newtheorem*{theorem*}{Theorem}
\newcommand{\supp}{\text{supp}}
\newcommand{\diam}{\text{diam}}
\newcommand{\ttau}{{\tilde{\tau}}}
\newcommand{\cA}{{\mathcal A}}
\newcommand{\tA}{{\widetilde A}}
\title{Logarithmic Fourier decay for  self conformal measures}
\author{Amir Algom, Federico Rodriguez Hertz, and Zhiren Wang}
\date{}
\begin{document}
\maketitle
\begin{abstract}
We prove that the Fourier transform of a self conformal measure on $\mathbb{R}$   decays to $0$ at infinity at a logarithmic rate, unless the following  holds: The underlying IFS is smoothly conjugated to an IFS that both acts linearly on its attractor and contracts by scales that are not Diophantine.  Our key technical result is an effective version of a local limit Theorem for cocycles with moderate deviations due to Benoist-Quint (2016), that is of independent interest. 
 \end{abstract}

\section{Introduction}

Let $\nu$ be a Borel probability measure on $\mathbb{R}$. For every $q\in \mathbb{R}$   the Fourier transform of $\nu$ at $q$ is defined by 
\begin{equation*} 
\mathcal{F}_q (\nu) := \int \exp( 2\pi i q x) d\nu(x).
\end{equation*} 
The measure $\nu$ is called a \textit{Rajchman measure} if $\lim_{|q|\rightarrow \infty} \mathcal{F}_q(\nu)=0$. It is a consequence of the Riemann-Lebesgue Lemma that if $\nu$ is absolutely continuous then it is Rajchman. On the other hand, by  Wiener's Lemma  if $\nu$ has an atom then it is not  Rajchman. For measures that are both continuous (no atoms) and singular, determining whether or not $\nu$ is a Rajchman measure may be a challenging problem even for well structured measures. The Rajchman property has various geometric consequences on the measure $\nu$ and its support, e.g. regarding the uniqueness problem    \cite{li2019trigonometric}. Further information about the rate of decay of $\mathcal{F}_q(\nu)$ has even stronger geometric consequences. For example,  by a classical Theorem of Davenport-Erd\H{o}s-LeVeque \cite{Davenport1964Erdos},  establishing a sufficiently fast rate of decay for $F_q (\nu)$ is one means towards finding normal numbers in the support of $\nu$. For some further applications of the Rajchamn property and the rate of decay, see the survey \cite{Lyons1995survey}.

The goal of this paper is to prove  that a wide class of fractal measures  enjoy logarithmic Fourier decay, assuming some mild conditions are met:  Let $\Phi= \lbrace f_1,...,f_n \rbrace$ be a finite set of strict contractions of a compact interval $I\subseteq \mathbb{R}$ (an \textit{IFS} - Iterated Function System), such that every $f_i$ is differentiable.  We say that  $\Phi$ is $C^\alpha$ smooth if every $f_i$ is at least $C^\alpha$ smooth for some $\alpha\geq 1$.  It is well known that there exists a unique compact set $\emptyset \neq K=K_\Phi \subseteq I$ such that
\begin{equation} \label{Eq union}
K = \bigcup_{i=1} ^n f_i (K).
\end{equation}
The set $K$ is called  the \textit{attractor} of the IFS $\lbrace f_1,...,f_n \rbrace$.  We always assume that there exist $i,j$ such that $x_i \neq x_j$, where $x_i$ is the fixed point of $f_i$. This ensures that $K$ is infinite. We call $\Phi$  \textit{uniformly contracting} if 
$$0< \inf \lbrace |f '(x)|:\, f\in \Phi, x\in I \rbrace \leq \sup \lbrace |f '(x)| :\, f\in \Phi, x\in I  \rbrace <1.$$
Next, writing $\mathcal{A}= \lbrace 1,...,n\rbrace$, for every $\omega \in \mathcal{A} ^\mathbb{N}$ and $m\in \mathbb{N}$ let
$$f_{\omega|_m} := f_{\omega_1} \circ \circ \circ f_{\omega_m}.$$
Fix $x_0 \in I$.  Then we have a surjective coding map $\pi: \mathcal{A} ^\mathbb{N} \rightarrow K$ defined by
$$\omega \in \mathcal{A}^{\mathbb N} \mapsto x_\omega:= \lim_{m\rightarrow \infty}  f_{\omega|_m}  (x_0),$$
which is a well defined map because of uniform contraction (see e.g. \cite[Section 2.1]{bishop2013fractal}). 

Let $\textbf{p}=(p_1,...,p_n)$ be a strictly positive probability vector, that is, $p_i >0$ for all $i$ and $\sum_i p_i =1$, and let $\mathbb{P}=\mathbf{p}^\mathbb{N}$ be the corresponding Bernoulli measure on $\mathcal{A}^\mathbb{N}$. We call the measure $\nu = \pi \mathbb{P}$ on $K$ the self conformal measure corresponding to $\mathbf{p}$, and note that our assumptions are known to imply that it is non-atomic.  Equivalently, $\nu$ is the unique Borel probability  measure on $K$ such that
$$\nu = \sum_{i=1} ^n p_i\cdot  f_i\nu,\quad \text{ where } f_i \nu \text{ is the push-forward of } \nu \text{ via } f_i.$$ 
When all the maps in $\Phi$ are affine we call $\Phi$ a self-similar IFS and $\nu$ a self-similar measure.

Next, we say that a $C^1$ IFS $\Phi$ is \textit{Diophantine} if there are $l , C>0$ such that  
\begin{equation} \label{Eq new Dio condition}
\inf_{y\in \mathbb{R}} \max_{i\in \lbrace 1,...n\rbrace } d\left(\,\log |f'_i (x_i)|\cdot x+y,\, \mathbb{Z} \right) \geq \frac{C}{|x|^{l}},  \text{ for all } x\in \mathbb{R} \text{ large enough in absolute value.}
\end{equation}
This condition is adopted from the work of Breuillard \cite{Breuillard2005llt} on effective local limit Theorems for classical random walks on $\mathbb{R}^d$, and serves a similar purpose for us as well. Note that it is invariant under conjugation by $C^1$ maps with non-vanishing derivative.   Next, we say that a $C^2$ IFS $\Psi$ is \textit{linear} if $g''(x) = 0$  for every  $x\in K_\Psi$  and $g\in \Psi$. Note that if $\Psi$ is $C^\omega$ and linear  then  it must be self-similar. We believe it is possible to construct a linear $C^2$ IFS that includes maps with non-locally constant derivative on the attractor, and we hope to discuss this in a future work.

Let $\mathcal{L}$ denote the family of all Borel probability measures on $\mathbb{R}$ that have logarithmic Fourier decay. That is, writing $\mathcal{P}(\mathbb{R})$ for the family of Borel probability measures on $\mathbb{R}$,
$$\mathcal{L} := \lbrace  \mu:\, \mu \in \mathcal{P}(\mathbb{R}) \text{ and there exists }  \alpha>0 \text{ such that } \left|\mathcal{F}_q (\nu)\right| \leq O\left( \frac{1}{ \left| \log |q| \right| ^\alpha} \right), \text{ as } |q|\rightarrow \infty \rbrace.$$
The following Theorem is the main result of this paper. We say that an IFS $\Phi$ is $C^r$ conjugate to an IFS $\Psi$ if there is a $C^r$ diffeomorphism $h$ such that  $\Phi = \lbrace h\circ g \circ h^{-1}\rbrace_{g\in \Psi}$. 
\begin{theorem} \label{Main Theorem}
Let $\Phi$ be an orientation preserving uniformly contracting $C^{r}$   IFS, where $r\geq 2$. If there exists a self conformal measure that is not in $\mathcal{L}$ then $\Phi$ is $C^r$ conjugate to a linear  non-Diophantine IFS.
\end{theorem}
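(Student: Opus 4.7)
The plan is to prove the contrapositive: if $\Phi$ is not $C^r$ conjugate to a linear non-Diophantine IFS, then every self conformal measure of $\Phi$ lies in $\mathcal L$. The engine of the argument is the log-derivative cocycle on the symbolic space $\mathcal A^{\mathbb N}$ and its statistics under the Bernoulli measure $\mathbb P$.

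First, I would iterate the self-conformality relation to write, for every $m$,
\[
\mathcal F_q(\nu) = \sum_{|\omega|=m} p_\omega \int e^{2\pi i q f_\omega(x)}\, d\nu(x).
\]
Fix a base point $x_* \in K$. Since $\Phi$ is $C^2$, bounded distortion lets me linearise each $f_\omega$ near $x_*$ as $f_\omega(x) = f_\omega(x_*) + f'_\omega(x_*)(x-x_*) + E_\omega(x)$, with errors $E_\omega$ uniformly small in $\omega$ once $m \sim \log|q|$. Up to negligible remainders, the decay of $\mathcal F_q(\nu)$ is therefore controlled by an oscillatory sum whose phases are governed by the additive cocycle $\Sigma_m(\omega) := -\log|f'_\omega(x_*)|$; this is a random walk on $\mathbb R$ driven by the symbolic shift, with positive drift by uniform contraction.

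To this sum I would apply the paper's main technical tool, an effective local limit theorem for cocycles with moderate deviations (the effective upgrade of Benoist--Quint announced in the abstract). For $m \sim \log|q|$ it yields polynomial-in-$\log|q|$ cancellation, provided the distribution of $\Sigma_m$ at appropriate scales satisfies a quantitative non-concentration hypothesis modelled on \eqref{Eq new Dio condition}. Thus, whenever this Diophantine-type condition on the cocycle holds, $\nu \in \mathcal L$ and we are done; it remains to analyse the case in which it fails.

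The failure of non-concentration forces the symbolic cocycle $\omega \mapsto \log|f'_{\omega_1}(\pi\sigma\omega)|$ to be cohomologous to a locally constant function on $\mathcal A^{\mathbb N}$. A Livsic-type rigidity theorem for H\"older cocycles over one-dimensional $C^r$ hyperbolic symbolic systems then produces a coboundary that, after standard regularity bootstrapping, defines a $C^r$ diffeomorphism $h$ of a neighbourhood of $K$. Conjugation by $h$ transforms $\Phi$ into an IFS $\Psi$ whose generators satisfy $g'' \equiv 0$ on $K_\Psi$, i.e.\ a linear IFS; on $\Psi$ the cocycle degenerates to an i.i.d.\ sum $\sum_k \log|g'_{\omega_k}|$, so the original non-concentration failure is equivalent to the failure of \eqref{Eq new Dio condition} for $\Psi$. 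The main obstacle I expect lies in this last step: upgrading the measurable or H\"older coboundary to genuine $C^r$ regularity, ensuring that a single one-dimensional reparametrisation $h$ simultaneously linearises \emph{all} $f_i$ on $K$, and extending $h$ to a $C^r$ diffeomorphism of the ambient interval $I$ so as to conjugate $\Phi$ to $\Psi$ in the strong sense demanded by the theorem's statement.
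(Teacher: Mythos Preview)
Your overall architecture is right in spirit---contrapositive, linearisation, reduce to statistics of the derivative cocycle, invoke an effective local limit theorem---but there is a genuine structural gap in where you locate the dichotomy, and it matters.

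You treat the effective LLT as a black box that fires whenever ``a Diophantine-type condition on the cocycle'' holds, and then argue that failure of this condition forces the cocycle to be cohomologous to a locally constant function. Neither half is correct as stated. The effective LLT (Theorem~\ref{Theorem LLT}) is not an off-the-shelf input; it is the paper's main technical result, and its proof rests on a Dolgopyat-type polynomial contraction estimate for the transfer operator $P_{i\theta}$ at large $|\theta|$ (Theorem~\ref{Thm Dolgopyat}). That estimate is where the true dichotomy lives: it is proved separately under the Diophantine hypothesis \eqref{Eq new Dio condition} \emph{and} under the not-conjugate-to-linear hypothesis. In the second case the argument does not go through Livsic or cohomology to a locally constant function at all. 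Instead one shows that failure of the Dolgopyat estimate forces the AAE (approximate eigenfunction) property, which in turn forces the temporal distance function $D$ to have lower box dimension zero; but if $\Phi$ is not conjugate to linear, an explicit computation (Claims~\ref{Claim next sufficinet}--\ref{Claim final ing}) shows that the image of $D$ contains a bi-Lipschitz copy of $K$ and hence has positive dimension. This is the mechanism that rules out AAE in the nonlinear case, and your Livsic sketch does not capture it.

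Your cohomology route also overshoots. ``Cohomologous to a locally constant function'' would yield a conjugacy to a self-similar IFS, but the paper's notion of \emph{linear} is strictly weaker ($g''\equiv 0$ on the attractor, with $g'$ possibly non-constant there in the $C^2$ category). Correspondingly, the conjugacy in Claim~\ref{Claim final ing} is built explicitly as a primitive of $\exp(\varphi_1)$, not via Livsic regularity bootstrapping, and the resulting $\Psi$ need not have an i.i.d.\ derivative cocycle. Finally, you never actually close the loop on why $\Psi$ is non-Diophantine: in the paper this is immediate because the Diophantine condition \eqref{Eq new Dio condition} is conjugation-invariant, so if the original $\Phi$ were Diophantine the Dolgopyat estimate would already have held and we would never have reached this branch.
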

Several remarks are in order: First,  Theorem \ref{Main Theorem} improves our previous work \cite[Theorem 1.1 and Corollary 1.2]{algom2020decay} by establishing a rate of decay in many new cases (our previous work was effective only  for Diophantine self similar IFSs). Secondly, the orientation preserving assumption is made purely for notational convenience, and can be easily dropped.   Finally, we emphasize that no separation conditions are imposed on $\Phi$.

Recent years have seen an explosion of interest and progress regarding the study of Fourier decay for fractal measures. We proceed to give a concise overview of results related to Theorem \ref{Main Theorem}, and refer to \cite[Section 1]{algom2020decay} for more details on e.g. the methods involved: Combining the work of Bourgain-Dyatlov \cite{Bour2017dya} with \cite{Li2018decay}, Li \cite{li2018fourier} proved the Rajchman property for Furstenberg measures for $SL(2,\mathbb{R})$ cocycles under mild assumptions (there are known conditions that ensure that such measures are self-conformal \cite{Yoccoz2004some, Avila2010jairo}).   Sahlsten-Stevens \cite{sahlsten2020fourier, sahlsten2018fourier} proved the Rajchman property  for Gibbs measures on $C^\omega$  self-conformal sets under some additional assumptions. These include the strong separation condition (i.e. that the union \eqref{Eq union} is disjoint), and a stronger non-linearity assumption: The IFS is not conjugate to an IFS where the derivatives of the maps are locally constant on its attractor. Our previous work \cite[Corollary 1.2 part (3)]{algom2020decay} gave a unified proof of the Rajchman property for many of these cases, and   Theorem \ref{Main Theorem} further upgrades this result by establishing a logarithmic rate of decay. On the other hand,  Bourgain-Dyatlov, Li, and Sahlsten-Stevens, establish  a polynomial rate of decay, but these works require various further assumptions. We believe that  when $\Phi$ is not $C^r$ conjugate to a linear IFS then the assumptions of Theorem \ref{Main Theorem} should ensure that all self conformal measures  have polynomial Fourier decay. See the end of this introduction for some more discussion about this issue.

Next,  suppose $\Phi$ is a $C^{2}$ IFS that is smoothly conjugated to a self similar IFS with contractions ratios $\lbrace r_1,...,r_n \rbrace \subset \mathbb{R}_+$ such that: There exist $C>0,l>2$ with
\begin{equation} \label{Eq Dio condition 2}
 \max_{i\in \lbrace 1,...n\rbrace } d(\,\log |r_i|\cdot x,\, \mathbb{Z}) \geq \frac{C}{|x|^{l}}, \text{ for all } x\in \mathbb{R} \text{ large enough in absolute value.}
\end{equation}
Then, by \cite[Remark 6.7]{algom2020decay}, there exists a $C^{2}$ IFS $\Psi$ as in Theorem \ref{Main Theorem} that satisfies \eqref{Eq new Dio condition}, and every self conformal measure with respect to $\Phi$ is also self-conformal with respect to $\Psi$.  So, in the conjugate-to-self-similar situation, it is enough to assume the self-similar IFS meets  condition \eqref{Eq Dio condition 2}  in order for all self-conformal measures to  be in $\mathcal{L}$. This generalizes an effective decay result of Li-Sahlsten  \cite[Theorem 1.3]{li2019trigonometric} for self-similar measures.

In the context of self-similar IFSs,   when all contraction ratios are powers of some $r\in (0,1)$,  Varj\'{u}-Yu  \cite{varju2020fourier} proved logarithmic decay as long as $r^{-1}$ is not a Pisot or a Salem number. Kaufman \cite{Kaufman1984ber} and Mosquera-Shmerkin \cite{Shmerkin2018mos} proved polynomial Fourier decay for $C^2$ non-linear IFS's that arise by conjugating homogeneous (that are never Diophantine) self-similar IFS's.   Solomyak \cite{Solomyak2021ssdecay, solomyak2021fourier} has recently shown that in fact, outside a zero Hausdorff dimension exceptional set of parameters,   self-similar measures on $\mathbb{R}$ and certain self-affine measures always have polynomial Fourier decay. Br\'{e}mont \cite{bremont2019rajchman} recently resolved  the Rajchman problem for self-similar measures on $\mathbb{R}$, and  Rapaport \cite{rapaport2021rajchman} extended this to self-similar measures on $\mathbb{R}^d$ for any $d\geq 1$ (see also \cite{Li2020Sahl}).  Finally, we mention the classical  work of Erd\H{o}s \cite{Erdos1940ber} and Kahane \cite{Kahane1971Ber} about polynomial decay being typical for Bernoulli convolutions, and the more recent  works \cite{Dai2012ber, Buf2014Sol, Dai2007Feng} about rates in some explicit examples of Bernoulli convolutions.

Let us now outline the proof of Theorem \ref{Main Theorem}, and along the way describe the orgnization of this paper. Fix $\Phi$ as in Theorem \ref{Main Theorem}, and assume it is either Diophantine or not-conjugate-to-linear. We aim to show that all self-conformal measures are in $\mathcal{L}$, which implies Theorem \ref{Main Theorem} since the Diophantine condition \eqref{Eq new Dio condition} is invariant under smooth conjugation. We begin with Section \ref{Section transfer}, where we define the derivative cocycle of the IFS and the  transfer operator corresponding to it and to a fixed probability vector $\mathbf{p}$ as above, and  recall some known results about it. We then proceed to prove Theorem \ref{Thm Dolgopyat}, an  estimate on the norm of iterations of the  transfer operator, which requires some delicate analysis that is closely related to the work of Dolgopyat \cite{Dolgopyat1998rapid}. In particular, in the not-conjugate-to-linear case  we will make use of the so-called temporal distance function \cite[Appendix A.1]{Dolgopyat1998rapid}. This is, to the best of our information, the first such anylsis to be done in the context of general $C^2$ IFS's without separation (in the presence of separation there are numreous papers that conduct similar analyses, e.g. the work of Naud \cite{Naud2005exp} for separated $C^\omega$ IFS's).

Afterwards, in Section \ref{Section LLT and CLT}, we show that  certain random walks driven by the derivative cocycle  satisfy an effective version of the central limit Theorem. This is Theorem \ref{Theorem CLT kasun}, that follows from a standard application of the Nagaev-Guivarc'h method as presented in the work of Gou\"{e}zel \cite{Gouzel2015limit}. Thus, all we have to do to this end is to verify that the conditions of  \cite[Theorem 3.7]{Gouzel2015limit} are met, which is a consequence of well known results that are discussed in Section \ref{Section prop of trans}.

Section \ref{Section LLT} then contain  the most subtle step towards Theorem \ref{Main Theorem}, and the main technical result of this paper: We prove an \textit{effective} version of Benoist-Quint's local limit Theorem with moderate deviations \cite[Theorem 16.1]{Benoist2016Quint} for random walks driven by the derivative cocycle. Here we combine our estimates on the contraction properties of the transfer operator obtained in Theorem \ref{Thm Dolgopyat} with the work of Breuillard \cite{Breuillard2005llt}, who proved effective local limit Theorems for classical random walks on $\mathbb{R}^d$ under a Diophantine condition similar to \eqref{Eq new Dio condition}, and with the work of Benoist-Quint \cite[Chapter 16]{Benoist2016Quint}, to derive our local limit Theorem \ref{Theorem LLT}.

In Section \ref{Section Thm equid} we  use these effective limit Theorems to obtain a certain effective conditional local limit Theorem for the derivative cocycle. This is Theorem \ref{Theorem equid}, which is an upgraded version of our previous result \cite[Theorem 3.7]{algom2020decay} as it is effective (holds with a polynomial rate). Finally, in Section \ref{Section proof}, we show that all self conformal measures  belong to $\mathcal{L}$. To this end we combine Theorem \ref{Theorem equid} with a delicate linerization scheme, and a more robust estimation of certain oscillatory integrals as in \cite[Section 4.2]{algom2020decay}.

Finally, we remark that in the not-conjugate-to-linear case it might be possible to further upgrade our local limit Theorem \ref{Theorem LLT} to hold with an exponential rate of convergence. This would be an important step towards showing that in this case all self-conformal measures have polynomial Fourier decay. Also,  it is  possible that Theorem \ref{Main Theorem} is optimal in the Diophantine case, since such IFSs may be self-similar, where much less is known regarding polynomial Fourier decay in concrete cases (though "most" self-similar measures do have polynomial decay as shown in the afformentioned work of Solomyak \cite{Solomyak2021ssdecay}).

\noindent{ \textbf{Acknowledgements}} We thank Tuomas Sahlsten and Kasun Fernando for some helpful  discussions. We  are also grateful to the anonymous referee for a very thorough reading and
many helpful suggestions which greatly improved the presentation of the paper

\section{The derivative cocycle and the associated transfer operator} \label{Section transfer}
\subsection{Preliminaries} \label{Section pre transfer}
Fix an orientation preserving $C^{2}$ IFS $\Phi = \lbrace f_1,...,f_n \rbrace$ and write $\mathcal{A}= \lbrace 1,...,n\rbrace$.  For every $1\leq a\leq n$ let $\iota_a: \mathcal{A}^{\mathbb N}\to \mathcal{A} ^{\mathbb N}$ be the map 
$$\iota_a(\omega_1,\omega_2,\cdots)=(a,\omega_1,\omega_2,\cdots).$$
Let  $G$ to be the free semigroup generated by the family $\{\iota_a: 1\leq a\leq n\}$, which acts on $\mathcal{A} ^{\mathbb N}$ by composing the corresponding $\iota_a$'s. We define the derivative cocycle $c:G\times \mathcal{A} ^{\mathbb N}\rightarrow \mathbb{R}$ via
\begin{equation} \label{The der cocycle}
c(a,\omega)=-\log f'_a(x_\omega).
\end{equation}
Let $\rho:= \sup_{f\in \Phi} ||f'||_\infty \in (0,1)$, and  define a metric on $\mathcal{A}^{\mathbb N}$ via 
\begin{equation} \label{Eq d rho}
d_\rho (\omega,\omega'):=\rho^{\min\{n:\ \omega_n\neq\omega'_n\}}.
\end{equation}
We record the following standard Claim for future use:
\begin{Claim} \label{Properties of cocycle}
For every $a\in \mathcal{A}$ the following statements hold true:
\begin{enumerate}
\item The map $\iota_a$ is uniformly contracting: 
$$d(\iota_a(\omega),\iota_a(\eta))=\rho d(\omega,\eta).$$

\item The cocycle $c(a,\omega)$ is  uniformly bounded, Lipschitz in $\omega$, with a uniformly bounded Lipschitz constant as $a \in \mathcal{A}$ varies.
\end{enumerate}
\end{Claim}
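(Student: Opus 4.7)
The plan is to prove both statements by direct computation, leveraging the combinatorial structure of the shift map $\iota_a$ and the uniform contraction of $\Phi$. The claim is fairly standard, so the proof should be mostly bookkeeping; I do not expect any genuine obstacle.

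For part (1), I would argue as follows. Given $\omega \neq \eta$, set $k = \min\{n : \omega_n \neq \eta_n\}$, so $d_\rho(\omega,\eta) = \rho^k$. Prepending $a$ to both sequences shifts the index of first disagreement by one: $\iota_a(\omega)$ and $\iota_a(\eta)$ agree in coordinate $1$ (both equal $a$) and for coordinates $n \geq 2$ we have $\iota_a(\omega)_n = \omega_{n-1}$ and similarly for $\eta$. Hence $\min\{n : \iota_a(\omega)_n \neq \iota_a(\eta)_n\} = k+1$, which yields $d_\rho(\iota_a(\omega),\iota_a(\eta)) = \rho^{k+1} = \rho\, d_\rho(\omega,\eta)$.

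For part (2), uniform boundedness of $c(a,\omega) = -\log f'_a(x_\omega)$ is immediate from the uniform contraction hypothesis on $\Phi$, which gives constants $0 < c_1 \leq c_2 < 1$ with $c_1 \leq f'_a(x) \leq c_2$ for all $a$ and all $x \in I$; thus $|c(a,\omega)|$ is bounded by $\max(|\log c_1|, |\log c_2|)$. For the Lipschitz bound, since $\Phi$ is $C^2$, the function $x \mapsto \log f'_a(x)$ is $C^1$ on $I$ with derivative $f''_a/f'_a$, whose sup-norm is bounded uniformly in $a \in \mathcal{A}$. By the mean value theorem,
$$|c(a,\omega) - c(a,\eta)| \leq M\, |x_\omega - x_\eta|,$$
where $M = \max_{a} \|f''_a/f'_a\|_\infty$. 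It remains to control $|x_\omega - x_\eta|$ in terms of $d_\rho(\omega,\eta)$.

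For this last step, let $k = \min\{n : \omega_n \neq \eta_n\}$. Because $\omega$ and $\eta$ share the first $k-1$ coordinates, we may write $x_\omega = f_{\omega|_{k-1}}(x_{\sigma^{k-1}\omega})$ and $x_\eta = f_{\omega|_{k-1}}(x_{\sigma^{k-1}\eta})$, where $\sigma$ is the shift. The map $f_{\omega|_{k-1}}$ is a composition of $k-1$ elements of $\Phi$, each of derivative at most $\rho$ in absolute value, so it is $\rho^{k-1}$-Lipschitz. Therefore
$$|x_\omega - x_\eta| \leq \rho^{k-1}\, \mathrm{diam}(I) = \rho^{-1}\,\mathrm{diam}(I)\, d_\rho(\omega,\eta),$$
and combining with the bound above gives a uniform Lipschitz constant for $c(a,\cdot)$ that does not depend on $a$. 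The only step that requires the full $C^2$ hypothesis is the MVT estimate on $\log f'_a$; everything else is immediate from the uniform-contraction assumption and the definition of the metric $d_\rho$.
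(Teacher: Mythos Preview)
Your proof is correct and follows exactly the approach the paper has in mind; the paper simply declares the claim ``standard'' and records the bounds $D,D'$ coming from uniform contraction together with the $C^2$ hypothesis, without writing out the details. Your argument spells out precisely those standard details (the index-shift computation for part (1), and the MVT on $\log f_a'$ combined with the Lipschitz bound on the coding map for part (2)).
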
 
This is standard, since all the maps in $\Phi$ are $C^{2}$ smooth, and since by uniform contraction
\begin{equation} \label{Eq C and C prime}
0<D:= \min \lbrace -\log |f' (x)| : f\in \Phi, x\in I \rbrace, \quad D':= \max \lbrace -\log |f' (x)| : f\in \Phi, x\in I \rbrace < \infty.
\end{equation}

Next, let $H^1 = H^1 (\rho)$ denote the space of Lipschitz functions  $\mathcal{A}^\mathbb{N} \rightarrow \mathbb{C}$ in the metric $d_\rho$,  and equip $H^1$ with the norm
\begin{equation} \label{Eq B-Q norm}
|\varphi|_1 = ||\varphi||_\infty + c_1 (\varphi),\text{ where } c_1(\varphi)= \sup_{\omega \neq \omega'} \frac{|\varphi(\omega)-\varphi(\omega')|}{d_\rho (\omega,\omega')} = \text{ the Lipschitz constant of } \varphi.
\end{equation}
Following Dolgopyat \cite[Section 6]{Dolgopyat1998rapid}, for every $\theta\neq 0$ we  define yet another norm on $H^1$ via
\begin{equation} \label{Eq norm dol theta}
||\varphi||_{(\theta)} = \max\lbrace ||\varphi||_\infty, \, \frac{c_1 (\varphi)}{2 C_6 |\theta|}  \rbrace
\end{equation}
for a  constant $C_6>0$ whose exact choice will be explained soon.

 Next, let $\textbf{p}=(p_1,...,p_n)$ be a strictly positive probability vector on $\mathcal{A}$, and let $\mathbb{P}=\mathbf{p}^\mathbb{N}$ be the corresponding product measure on $\mathcal{A}^\mathbb{N}$. Note that $\mathbb{P}$ is the unique stationary measure corresponding to the measure $\mu := \sum_{a\in \mathcal{A}} p_a \cdot \delta_{\lbrace \iota_a \rbrace}$ on $G$.
 \begin{Definition} \label{Def transfer operator}
 For every $\theta \in \mathbb{R}$ let $P_{i\theta}:H^1 \rightarrow H^1$ denote the transfer operator defined by, for $\phi \in H^1$ and $\omega \in \mathcal{A}^\mathbb{N}$,
$$P_{i\theta} (\phi)(\omega) = \int e^{2\pi i \theta c(a,\omega)} \phi(\iota_a(\omega)) d\mathbf{p}(a). $$
 \end{Definition}
We can now remark that the constant $C_6>0$ is chosen so that $|| P_{i\theta} ^n ||_{(\theta)} \leq 1$ for all $n$  - see \cite[Proposition 2]{Dolgopyat1998rapid} for more details.

\subsection{Some properties of the transfer operator}  \label{Section prop of trans}
In this Section we recall some properties of the family of operators $\lbrace P_{i\theta} \rbrace_{\theta \in \mathbb{R}}$, working with the norm \eqref{Eq B-Q norm} on $H^1$. We begin with following standard results:
\begin{theorem}  \label{Theorem B-Q 1}
Suppose
$\Phi$ satisfies the conditions of Theorem \ref{Main Theorem}. Let $\mathbb{P}=\mathbf{p}^\mathbb{N}$ be a Bernoulli measure on $\mathcal{A}^\mathbb{N}$. Then the following properties hold true:
\begin{enumerate}
\item \cite[Lemma 11.17]{Benoist2016Quint} $P_{i\theta}$ is an analytic function of $\theta$. 

\item  \cite[Lemma 15.1 and Lemma 15.3]{Benoist2016Quint} The constant function $\mathbf{1} \in H^1$  is an isolated and simple eigenvalue of $P_{i0}$.  All other eigenvalues of $P_{i0}$ have absolute value less than $1$, and its essential spectrum is strictly contained inside the unit disc. 
 
\end{enumerate}

\end{theorem}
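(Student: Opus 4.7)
Both parts are stated as direct citations to Benoist--Quint, so my plan is to verify that our setting fits their general framework and then indicate the derivation each cited lemma provides. The Markov system generated by $(G,\mu)$ acting on $\mathcal{A}^\mathbb{N}$ is exactly of the type considered in Chapters 11 and 15 of \cite{Benoist2016Quint}: each generator $\iota_a$ is a $\rho$-contraction in the metric $d_\rho$ by Claim \ref{Properties of cocycle}(1), and the cocycle $c$ is bounded (by $D'$, see \eqref{Eq C and C prime}) and uniformly Lipschitz in $\omega$ by Claim \ref{Properties of cocycle}(2). The operator $P_{i\theta}$ of Definition \ref{Def transfer operator} is then the Markov transfer operator twisted by $e^{2\pi i \theta c}$, matching the setup of their Lemma 11.17 verbatim.

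For Part 1, analyticity follows by expanding the kernel $e^{2\pi i \theta c(a,\omega)}$ as a power series in $\theta$. The $n$-th coefficient $P^{(n)}\phi(\omega) = \int \frac{(2\pi i c(a,\omega))^n}{n!} \phi(\iota_a \omega) \, d\mathbf{p}(a)$ defines a bounded operator on $H^1$ of norm $O\bigl((2\pi D')^n/n!\bigr)$, using the uniform bound and Lipschitz control of $c$; hence the series $\sum_n P^{(n)}\theta^n$ converges in operator norm on every bounded subset of $\mathbb{C}$ and defines an entire family.

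For Part 2, the key input is a Lasota--Yorke (Doeblin--Fortet) inequality. Iterating Definition \ref{Def transfer operator} at $\theta=0$ expresses $P_{i0}^n \phi(\omega)$ as an average of values of $\phi$ at points obtained by applying an $n$-fold composition of generators to $\omega$; since each such composition has Lipschitz constant $\rho^n$ by Claim \ref{Properties of cocycle}(1), we obtain $\|P_{i0}^n \phi\|_\infty \leq \|\phi\|_\infty$ and $c_1(P_{i0}^n \phi) \leq \rho^n \, c_1(\phi)$. Combined with the compact inclusion $H^1 \hookrightarrow C(\mathcal{A}^\mathbb{N})$ (Arzel\`a--Ascoli), Hennion's theorem then gives quasi-compactness of $P_{i0}$ on $H^1$ with essential spectral radius at most $\rho<1$. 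Since $P_{i0}\mathbf{1}=\mathbf{1}$, the value $1$ is an eigenvalue; uniqueness of the stationary measure $\mathbb{P}$ (which in turn follows from the irreducibility coming from the assumption that the IFS has at least two distinct fixed points) forces this eigenvalue to be simple and rules out any other eigenvalue on the unit circle, which is precisely the content of Benoist--Quint's Lemmas 15.1 and 15.3 in our situation. The only substantive step is the Lasota--Yorke estimate, which is immediate from Claim \ref{Properties of cocycle}, so I do not anticipate a genuine obstacle.
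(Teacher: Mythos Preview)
Your approach is essentially the paper's: both arguments consist of verifying that the Benoist--Quint hypotheses (contraction of the $\iota_a$'s, bounded Lipschitz cocycle, finite moments) are met via Claim \ref{Properties of cocycle}, and then invoking \cite[Lemmas 11.17, 15.1, 15.3]{Benoist2016Quint}; you simply unpack more of the internal mechanics (power-series expansion for analyticity, Lasota--Yorke plus Hennion for quasi-compactness), which is fine.

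One correction to your parenthetical: uniqueness of the $\mu$-stationary measure on $\mathcal{A}^\mathbb{N}$ is a consequence of the uniform contraction in Claim \ref{Properties of cocycle}(1), not of the assumption that two maps have distinct fixed points---the latter is used in the paper only to guarantee that the attractor $K$ is infinite. Relatedly, uniqueness of the stationary measure by itself yields simplicity of the eigenvalue $1$ but does not automatically exclude other peripheral eigenvalues $e^{i\alpha}$; the extra step (that any modulus-one eigenfunction of $P_{i0}$ must be constant) is precisely what \cite[Eq.~(15.3)]{Benoist2016Quint} supplies, again using the contraction, so your deferral to Lemmas 15.1 and 15.3 at the end is what actually closes the argument.
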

Let us take a moment to explain how our setup fits into the more general one outlined in the work of  Benoist-Quint \cite{Benoist2016Quint}: With the notations of \cite[Chapter 11]{Benoist2016Quint}, our acting semigroup is $G$ as in the beginning of Section \ref{Section pre transfer}, $F$ is the trivial group (and so is the morphism $s$), and $E$ is simply taken to be $\mathbb{R}$. Recalling the definition of the measure $\mu$ on $G$ from before Definition \ref{Def transfer operator}, the compact metric $G$-space on which $G$ is $(\mu,1)$-contracting is taken to be $\mathcal{A}^\mathbb{N}$ (this follows from Claim \ref{Properties of cocycle} part (1)), and recall that $\mathbb{P}$ is the unique stationary measure. Since $\mu$ is finitely supported, via Claim \ref{Properties of cocycle} part (2) our cocycle $c$ trivially has both finite exponential moment and its Lipschitz constant has finite moment \cite[Eq. (11.14) and (11.15)]{Benoist2016Quint}. Thus, in our setup \cite[Lemmas 11.17, 15.1, 15.3]{Benoist2016Quint} can all be applied. Now, \cite[Lemma 11.17]{Benoist2016Quint} immediately implies Theorem \ref{Theorem B-Q 1} part (1). Since, by   \cite[Equation (15.3)]{Benoist2016Quint} the only eigenfunction of modulus $1$ of $P_{i0}$ is $\mathbf{1}$, \cite[Lemma 15.1 and Lemma 15.3]{Benoist2016Quint} imply Theorem \ref{Theorem B-Q 1} part (2).

We proceed to recall some results proved by Benoist-Quint \cite{Benoist2016Quint} regarding certain contraction properties of $P_{i\theta}$:   For every small enough $\theta$  the operator $N_{i\theta}:H^1 \rightarrow H^1$ is defined in \cite[Lemma 11.18]{Benoist2016Quint} as an analytic continuation of the operator $N_0 (\varphi) = N(\varphi)= \mathbb{P} ( \varphi)$. Furthermore, $N_{i\theta}$ is the projection onto the one dimensional eigenspace spanned by the eigenvector with the leading eigenvalue $\lambda_{i\theta}$ of $P_{i\theta}$.  The local behaviour of $\lambda_{i\theta}$ near $0$ plays a crucial role in the analysis of Benoist-Quint \cite[$\text{Parts } \romannumeral 3 \text{ and }  \romannumeral 4$]{Benoist2016Quint}, and also in our work. 

In the following Proposition we  use the standard re-centring  trick \cite[Equation (3.9)]{Benoist2016Quint} and assume 
\begin{equation} \label{Eq rece cocycle}
 \chi=\chi_\mathbf{p} = \int c(a,\omega) d\mathbf{p}(a) d\mathbb{P} (\omega) =0.
\end{equation} 
Notice that this amounts to changing the cocycle $c$ to a re-centred version $c-\chi$, which only adds a constant phase to $P_{i\theta}$, so it does not affect its norm (note that  $\chi$ equals $\sigma_{\mathbf{p}}$ in the notations of \cite[Equation (3.9)]{Benoist2016Quint}).

\begin{Proposition} (Benoist-Quint) \label{Prop compact and small guys} Assume the conditions of Theorem \ref{Theorem B-Q 1} hold, and  suppose in addition that $\Phi$ is either Diophantine or not-conjugate-to-linear. Then   we have:
\begin{enumerate}
\item \cite[Corollary 15.2]{Benoist2016Quint}  Let $J\subset \mathbb{R}_+$ be a compact set such that $0\notin J$. Then there are $n_0 \in \mathbb{N}$ and $C'\in (0,1)$ such that
$$\sup_{\theta \in J} ||P_{i\theta} ^{n_0} ||_1 < C' <1.$$

\item \cite[Lemmas 11.18 and 11.19]{Benoist2016Quint} For every $\epsilon>0$ small enough there is some constant $C'' \in (0,1)$ such that
$$\sup_{|\theta| \in [0,\epsilon]} ||P_{i\theta} ^{n} ||_1 \leq 2 e^{-C'' \cdot \theta^2 \cdot  n}.$$
 \end{enumerate}
\end{Proposition}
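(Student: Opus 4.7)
The plan is to deduce both parts from the spectral theory of $P_{i\theta}$ developed by Benoist--Quint, after verifying that our setup fits their framework. As explained in the paragraph following Theorem \ref{Theorem B-Q 1}, the derivative cocycle $c$ is bounded and uniformly Lipschitz by Claim \ref{Properties of cocycle}(2), so it has finite exponential and Lipschitz moments, $\mathbb{P}$ is the unique $\mu$-stationary measure, and the space $\mathcal{A}^{\mathbb N}$ is $(\mu,1)$-contracting. This allows the cited lemmas from \cite[Chapters 11, 15]{Benoist2016Quint} to be applied directly.

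For part (2), I would use analytic perturbation theory. By Theorem \ref{Theorem B-Q 1}(2), $1$ is a simple eigenvalue of $P_{i0}$ which is isolated from the rest of the spectrum whose radius is strictly less than $1$. Combined with the analyticity of $\theta\mapsto P_{i\theta}$ (Theorem \ref{Theorem B-Q 1}(1)), Kato's perturbation theorem produces, for $\theta$ in a neighborhood of $0$, an analytic family $\lambda_{i\theta}$ of simple leading eigenvalues and an analytic family $N_{i\theta}$ of rank-one spectral projectors, with $\lambda_{i0}=1$ and $N_{i0}=N$. Decomposing
$$P_{i\theta}^n=\lambda_{i\theta}^n N_{i\theta}+\bigl(P_{i\theta}(I-N_{i\theta})\bigr)^n,$$
the second summand decays exponentially in $n$ uniformly in $\theta$ near $0$. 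Expanding $\lambda_{i\theta}=1+i\chi\theta-\tfrac{1}{2}\sigma^2\theta^2+O(\theta^3)$ via the standard moment computation and using the re-centering $\chi=0$ from \eqref{Eq rece cocycle}, we obtain $|\lambda_{i\theta}|=1-\tfrac{1}{2}\sigma^2\theta^2+O(\theta^3)$, which produces the quadratic-exponential bound with any $C''<\sigma^2/2$. The factor $2$ on the right hand side of the estimate absorbs the operator norms of $N_{i\theta}$ and of the complementary piece.

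For part (1), the strategy is first to show that for every fixed $\theta\neq 0$, the spectral radius of $P_{i\theta}$ on $H^1$ is strictly less than $1$. If this holds, then combined with analyticity of $\theta\mapsto P_{i\theta}$, compactness of $J$, and upper semicontinuity of the spectral radius, one gets a uniform bound $\rho_0<1$ on $J$; the spectral radius formula then yields $\|P_{i\theta}^{n_0}\|_1\leq C'$ for some sufficiently large $n_0$ uniformly in $\theta\in J$. The crux is the individual bound for $\theta\neq 0$: a modulus-$1$ eigenvalue $\lambda$ with eigenfunction $\psi$ would, by a Perron--Frobenius type argument applied to $|\psi|$, force $|\psi|$ to be constant and $\psi$ to satisfy the cohomological equation $\psi(\omega)=\overline{\lambda}\,e^{2\pi i\theta c(a,\omega)}\psi(\iota_a\omega)$ for $\mathbf{p}$-almost every $a$. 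This is precisely the arithmeticity obstruction that the hypotheses of Theorem \ref{Main Theorem} rule out: under the Diophantine condition \eqref{Eq new Dio condition}, this equation is incompatible with the lower bounds on $d(\log|f'_i(x_i)|\theta,\mathbb{Z})$ for large $\theta$ (small $\theta$ being covered by part (2)); in the not-conjugate-to-linear case, the obstruction is eliminated by a Dolgopyat-type oscillation argument based on the temporal distance function, which is exactly what is carried out in Theorem \ref{Thm Dolgopyat} below.

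The main obstacle is the strict spectral radius bound in part (1) in the not-conjugate-to-linear case: unlike in classical separated $C^\omega$ settings where Naud-type arguments apply, we must deal with overlapping and only $C^2$-regular IFSs. This is precisely why the bulk of the subsequent work in Section \ref{Section transfer} is devoted to a careful Dolgopyat analysis adapted to this generality, rather than to a direct citation of \cite{Benoist2016Quint}.
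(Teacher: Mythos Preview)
Your overall structure matches the paper's: both parts are deduced from the Benoist--Quint spectral theory after checking that the framework applies. However, there is one genuine gap and one conceptual misunderstanding.

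\textbf{Gap in part (2).} Your expansion $|\lambda_{i\theta}|=1-\tfrac12\sigma^2\theta^2+O(\theta^3)$ only yields a positive $C''$ if $\sigma^2>0$, and you never address this. In the paper this is the \emph{entire} content added beyond the citation: one must rule out $r_0=0$. The paper does so by observing that $r_0=0$ forces the derivative cocycle to be $C^1$ cohomologous to a constant, which is impossible under the Diophantine condition and, in the other case, would force $\Phi$ to be conjugate to linear (the argument is that of Claim~\ref{Claim final ing}). Without this step your bound in part (2) is vacuous.

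\textbf{Misunderstanding in part (1).} You suggest that in the not-conjugate-to-linear case the absence of a modulus-$1$ eigenfunction requires the Dolgopyat machinery of Theorem~\ref{Thm Dolgopyat}, and that this is why Section~\ref{Section contraction} exists. That is not so. Part (1) needs only the \emph{qualitative} fact that $P_{i\theta}$ has no eigenfunction of modulus $1$ for $\theta\neq 0$; the paper obtains this directly from \cite[Sections~6.1 and 6.4]{algom2020decay} and then invokes \cite[Corollary 15.2]{Benoist2016Quint}. Theorem~\ref{Thm Dolgopyat} is a much stronger \emph{quantitative} contraction estimate for large $|\theta|$, developed for a different purpose (the effective local limit theorem via Theorem~\ref{Lemma key 2}), not to salvage Proposition~\ref{Prop compact and small guys}(1). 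Your alternative route---combine part (2) for small $|\theta|$ bounded away from $0$ with Theorem~\ref{Thm Dolgopyat} for $|\theta|>1$---could in principle be made to work and is not circular, but it is needlessly heavy and itself depends on the $\sigma^2>0$ verification you omitted. Also, your parenthetical ``small $\theta$ being covered by part (2)'' in the Diophantine case is muddled: the Diophantine condition \eqref{Eq new Dio condition} is about large frequencies of the test variable, not about the size of the fixed $\theta$ at hand; the relevant input is simply that the cohomological equation contradicts aperiodicity of the cocycle, which follows from either hypothesis without case-splitting on $|\theta|$.
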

Here we are using the norm from \eqref{Eq B-Q norm} for the operator norm, as in \cite[Chapter 11.3]{Benoist2016Quint}. Now,  part (1) follows from \cite[Corollary 15.2]{Benoist2016Quint} since our assumptions are known to imply that $P_{i\theta}$ does not have an eigenfunction of modulus $1$ for $\theta \neq 0$: This follows from e.g. \cite[Section 6.1]{algom2020decay} in the Diophantine case, and from \cite[Section 6.4]{algom2020decay} in the not-conjugate-to-linear case.  To derive part (2) from \cite[Lemmas 11.18 and 11.19]{Benoist2016Quint}  we need to explain why here the variance $r_0 = r_0 (\mathbf{p})$ of the associated Gaussian  as in the central limit Theorem \cite[Theorem 12.1 part (i)]{Benoist2016Quint} (see also Section \ref{Section LLT and CLT}) satisfies that $r_0>0$: Recall that $I$ is an interval such that every $f\in \Phi$ is a self map of $I$. We can define a derivative cocycle $c'$ directly on $\mathcal{A} \times I$ via
$$c'(i,x)= -\log f_i '(x).$$ 
It is well known that having $r_0 = 0$ implies that the cocycle $c'$ is $C^1$ co-homologous to a constant (see e.g. \cite[Section 6.4]{algom2020decay} for a very similar argument). This  is clearly impossible if $\Phi$ is Diophantine. In addition, if $c'$ is $C^1$ co-homologous to a constant, then a standard argument shows  that $\Phi$ is conjugate to linear (in fact, this argument is included in the proof of Claim \ref{Claim final ing} below). Thus, in our setting $r_0>0$, and so one may use the Taylor-Young formula  for $\lambda_{i\theta}$ obtained via  \cite[Lemmas 11.18 and 11.19]{Benoist2016Quint}  similarly to e.g.  \cite[third paragraph in the proof of Theorem 3.7]{Gouzel2015limit} to derive part (2) (Note: in that proof $\lambda_{i\theta}$ is denoted by $\lambda(t)$). 

\subsection{Contraction properties  of $P_{i\theta}$ for large $\theta$} \label{Section contraction}
As in the work of Dolgopyat \cite{Dolgopyat1998rapid}, for every $\beta>0$ and $\theta \in \mathbb{R}$ let
$$n(\beta, \theta) = [\beta\cdot \log |\theta|].$$
The following Theorem is the key behind the proof of our effective local limit Theorem with moderate deviations, Theorem \ref{Theorem LLT}:

\begin{theorem} \label{Thm Dolgopyat}
Suppose $\Phi$ satisfies the conditions of Theorem \ref{Main Theorem} and is either Diophantine or not-conjugate-to-linear. Then there are $\alpha,\beta,C>0$ such that for every $|\theta|>1$ we have
$$||P_{i\theta} ^{n(\beta,\theta)} ||_{(\theta)} \leq 1- \frac{C}{|\theta|^\alpha}$$
where the operator norm is taken with respect to the norm $||\cdot ||_{(\theta)}$.
\end{theorem}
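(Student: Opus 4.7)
The plan is to adapt Dolgopyat's oscillatory cancellation technique \cite{Dolgopyat1998rapid} to our non-separated symbolic setting, carefully tracking the quantitative dependence on $|\theta|$. Since the norm $\|\cdot\|_{(\theta)}$ is calibrated so that $\|P_{i\theta}^{n}\|_{(\theta)} \le 1$ automatically, the task is to produce a strict gain of size $|\theta|^{-\alpha}$ once $n \sim \beta\log|\theta|$ iterations have been performed. Expanding
$$(P_{i\theta}^{n}\phi)(\omega) = \sum_{\mathbf{a}\in\mathcal{A}^n} p(\mathbf{a})\, e^{2\pi i \theta c_n(\mathbf{a},\omega)}\,\phi(\iota_{\mathbf{a}}\omega),$$
where $c_n(\mathbf{a},\omega)=\sum_{k=0}^{n-1} c(a_{k+1},\iota_{a_{k+2}}\cdots\iota_{a_n}\omega)$, the triangle inequality saturates only when the phases $\theta c_n(\mathbf{a},\cdot)$ align across most branches $\mathbf{a}$; the proof must show that such alignment can only occur on a small set.

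The first step is a $L^2$-type Dolgopyat contraction: squaring the expansion introduces, for each pair $(\mathbf{a},\mathbf{b})$ of branches, the symbolic temporal distance function
$$\Delta_{\mathbf{a},\mathbf{b}}(\omega,\omega') = c_n(\mathbf{a},\omega)-c_n(\mathbf{b},\omega)-c_n(\mathbf{a},\omega')+c_n(\mathbf{b},\omega'),$$
and the cross-term oscillates at scale $2\pi\theta\,\Delta_{\mathbf{a},\mathbf{b}}$. Following the scheme of \cite{Dolgopyat1998rapid,Naud2005exp}, I would partition $\mathcal{A}^{\mathbb{N}}$ into cylinders of length $n$ and set up a majorant for $|P_{i\theta}^n\phi|^2$ that is strictly less than $\|\phi\|_{(\theta)}^2$ on a subset of $\mathbb{P}$-measure bounded below by a polynomial in $|\theta|^{-1}$. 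The norm $\|\cdot\|_{(\theta)}$ is tailored so that a Lasota-Yorke inequality controlling the Lipschitz seminorm of $P_{i\theta}^n\phi$ in terms of $\|\phi\|_{(\theta)}$ is essentially automatic from Claim \ref{Properties of cocycle}; combining this with the $L^\infty$ strict gain on the "oscillation-rich" set yields the stated bound in $\|\cdot\|_{(\theta)}$.

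The second step is to feed in the two dichotomous inputs quantitatively. In the Diophantine case, condition \eqref{Eq new Dio condition} applied to $\theta\cdot\log|f'_i(x_i)|$ forces, for every $|\theta|>1$, at least one index $i$ with $d(\theta\log|f'_i(x_i)|,\mathbb{Z})\gtrsim |\theta|^{-l}$; expanding $c_n$ near the fixed points and using Claim \ref{Properties of cocycle}(2) to control errors, this translates into a quantitative lower bound $|\theta \Delta_{\mathbf{a},\mathbf{b}}| \gtrsim |\theta|^{-l}$ on a positive proportion of pairs. In the not-conjugate-to-linear case, by \cite[Section 6.4]{algom2020decay} the symbolic temporal distance cannot vanish on any open set; a compactness argument together with the $C^r$ regularity ($r\ge 2$) produces a fixed $\omega_0, \omega_0', \mathbf{a}_0, \mathbf{b}_0$ and a fixed scale $\delta>0$ on which $\Delta$ is bounded away from $0$, and then composing with $\sim\log|\theta|$ iterates turns this fixed gap into the required $|\theta|^{-\alpha}$ oscillation.

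The third step is the calibration $n=[\beta\log|\theta|]$: this must be large enough that the UNI/Diophantine oscillation on the fixed scale $\delta$ gets amplified to exceed the spatial scale $1/|\theta|$ at which phases are read, yet small enough that the number of branches $|\mathcal{A}|^n$ remains polynomial in $|\theta|$. The exponents $\alpha,\beta$ are determined explicitly in terms of $l$ (Diophantine exponent) or the UNI exponent and $|\log\rho|$. The principal difficulty — and what separates this argument from Naud's work on $C^\omega$ separated systems — is the absence of any separation hypothesis on $\Phi$: different $f_{\mathbf{a}}$ may have overlapping images, so geometric Dolgopyat arguments break down. The remedy is to work entirely at the symbolic level on $(\mathcal{A}^{\mathbb{N}},d_\rho)$, where distinct words remain separated, and to import the analytic inputs only through the cocycle $c$ and the fixed-point data of $\Phi$, where the Diophantine or non-linearity assumption supplies the oscillation; verifying that this symbolic implementation survives the quantitative bookkeeping needed for a polynomial (rather than merely positive) rate is the main technical obstacle.
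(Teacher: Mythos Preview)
Your approach diverges substantially from the paper's, and in its present form has a real gap.

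The paper does \emph{not} carry out a direct Naud-style $L^2$ majorant argument. Instead it invokes two lemmas from \cite{Dolgopyat1998rapid} as black boxes: Lemma~\ref{Lemma 3} says that it suffices to find, for each $\theta$ and each $\varphi$ with $\|\varphi\|_{(\theta)}\le 1$, a \emph{single point} $\omega_0$ and a time $n\le 3n(\beta,\theta)$ at which $|P_{i\theta}^n\varphi(\omega_0)|\le 1-|\theta|^{-\alpha}$; Lemma~\ref{Lemma 4} says that if this fails for all $\alpha,\beta$, then $\Phi$ has the approximate eigenfunction (AAE) property. The paper then works by contradiction: in the Diophantine case, AAE is tested at the fixed points $\overline a$ and immediately collides with \eqref{Eq new Dio condition}; in the not-conjugate-to-linear case, Theorem~\ref{Theorem AAE and box dim} shows AAE forces $\underline{\dim}_B$ of the image of the temporal distance function to vanish, while Lemma~\ref{Lemma sufficent holds} and Claims~\ref{Claim next sufficinet}--\ref{Claim final ing} show non-linearity forces it to be positive. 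In both cases one concludes that some $\alpha$ satisfies Lemma~\ref{Lemma 3}, and the theorem follows.

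Your proposal instead attempts the direct construction: squaring, pairing branches, producing a set of positive $\mathbb P$-measure (polynomial in $|\theta|^{-1}$) on which the majorant is strictly contracted. This is exactly the step that the paper's indirect argument is designed to bypass. In the non-separated setting you cannot use the geometric ``good cylinder'' arguments of \cite{Naud2005exp}, and your final paragraph acknowledges this but does not resolve it: saying ``work entirely at the symbolic level on $(\mathcal A^{\mathbb N},d_\rho)$'' is not enough, because the symbolic majorant/Lasota--Yorke scheme still needs a quantitative lower bound on the $\mathbb P$-measure of the oscillation set, and you have not indicated how to obtain one from either hypothesis. The paper's route via Lemmas~\ref{Lemma 3}--\ref{Lemma 4} replaces this measure estimate by a single-point estimate and then argues contrapositively, which is what makes the argument go through without separation. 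Your ``compactness argument'' in the not-conjugate-to-linear case is likewise vague: the paper needs the full chain Theorem~\ref{Theorem AAE and box dim} $\to$ Lemma~\ref{Lemma sufficent holds} $\to$ Claim~\ref{Claim next sufficinet} $\to$ Claim~\ref{Claim final ing} to extract a usable non-integrability condition, and this is not a routine compactness step.
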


The proof of Theorem \ref{Thm Dolgopyat} relies  on some ideas going back to the work of  Dolgopyat \cite{Dolgopyat1998rapid}.  First, we will need:
\begin{Lemma} \cite[Lemma 3]{Dolgopyat1998rapid} \label{Lemma 3}
Let $\alpha>0$. If there is some $\beta>0$ such that for every $\theta$ with $|\theta|>1$ and for every $\varphi \in H^1$ with $||\varphi||_{(\theta)} \leq 1$ there exists some $\omega_0 \in \mathcal{A}^\mathbb{N}$ and $0\leq n \leq 3 n(\beta,\theta)$ such that
$$\left|P_{i\theta} ^n \left( \varphi \right)(\omega_0) \right| \leq 1- \frac{1}{|\theta|^\alpha},$$
then there exist $\tilde{\beta}, C_{15}, \alpha_{9}>0$ such that for every $|\theta|>1$
$$||P_{i\theta} ^{n(\tilde{\beta},\theta)} ||_{(\theta)} \leq 1-\frac{C_{15}}{4|\theta|^{\alpha_9}}.$$
\end{Lemma}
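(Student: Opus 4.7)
The plan is to start from the pointwise smallness at $\omega_0$ and upgrade it in two stages to the uniform $\|\cdot\|_{(\theta)}$-operator bound: first propagating it to a uniform sup-norm estimate over all of $\mathcal{A}^{\mathbb N}$ via a Doeblin--Fortet style cylinder decomposition, and then controlling the Lipschitz part by the sup norm using the built-in $\rho^n$-contraction from Claim~\ref{Properties of cocycle}(1).

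Given $\varphi\in H^1$ with $\|\varphi\|_{(\theta)}\leq 1$, let $\omega_0=\omega_0(\varphi)$ and $0\leq n_0\leq 3n(\beta,\theta)$ be as in the hypothesis, so that $|P_{i\theta}^{n_0}\varphi(\omega_0)|\leq 1-|\theta|^{-\alpha}$. Since $c_1(\varphi)\leq 2C_6|\theta|$ and the cocycle $c$ is uniformly Lipschitz in $\omega$ by Claim~\ref{Properties of cocycle}(2), a direct computation gives $c_1(P_{i\theta}^{n_0}\varphi)=O(|\theta|)$ uniformly in $n_0$. Thus the pointwise bound extends to
$$\bigl|P_{i\theta}^{n_0}\varphi(\omega)\bigr|\leq 1-\tfrac12|\theta|^{-\alpha}\qquad\text{for all }\omega\in[\omega_0|_k],$$
once the cylinder depth $k$ is chosen so that $\rho^k\leq 1/(4C_6|\theta|^{1+\alpha})$, i.e.\ $k=C_1\log|\theta|$ for an explicit $C_1=C_1(\rho,\alpha,C_6)$.

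Now apply $P_{i\theta}^k$ to $P_{i\theta}^{n_0}\varphi$:
$$P_{i\theta}^{n_0+k}\varphi(\omega)=\sum_{\mathbf a\in \mathcal{A}^k}p_{\mathbf a}\,e^{2\pi i\theta S_k c(\mathbf a,\omega)}\,(P_{i\theta}^{n_0}\varphi)(\iota_{\mathbf a}\omega).$$
Since $\iota_{\mathbf a}\omega$ begins with the word $\mathbf a$, the good cylinder $[\omega_0|_k]$ is hit only when $\mathbf a=\omega_0|_k$; but this single term already carries mass $p_{\omega_0|_k}\geq p_{\min}^k=|\theta|^{-c}$ where $c=C_1\log(1/p_{\min})/\log(1/\rho)$. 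Splitting the sum into this privileged term (whose factor is bounded by $1-\tfrac12|\theta|^{-\alpha}$) and the remaining ones (where only the trivial bound $|P_{i\theta}^{n_0}\varphi|\leq 1$ is available) yields
$$\bigl\|P_{i\theta}^{n_0+k}\varphi\bigr\|_\infty\leq 1-\tfrac12|\theta|^{-\alpha-c}$$
uniformly in $\omega$. Setting $\tilde n:=n_0+k\leq (3\beta+C_1)\log|\theta|$ completes the sup-norm contraction.

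Finally, to upgrade to $\|\cdot\|_{(\theta)}$, one uses $c_1(P_{i\theta}^{\tilde n}\varphi)\leq O(|\theta|)\,\|\varphi\|_\infty+\rho^{\tilde n}c_1(\varphi)$, so that with $C_6$ chosen exactly as in the definition of $\|\cdot\|_{(\theta)}$, the normalised Lipschitz part $c_1(P_{i\theta}^{\tilde n}\varphi)/(2C_6|\theta|)$ is dominated by $\|P_{i\theta}^{\tilde n}\varphi\|_\infty$ plus a term of order $\rho^{\tilde n}\|\varphi\|_{(\theta)}$ which is negligible for $\tilde\beta$ large. This gives $\|P_{i\theta}^{\tilde n}\varphi\|_{(\theta)}\leq 1-C/|\theta|^{\alpha_9}$ with $\alpha_9=\alpha+c$, and picking $\tilde\beta\geq 3\beta+C_1$ so that $n(\tilde\beta,\theta)\geq\tilde n$ uniformly in $|\theta|>1$, the claimed operator norm bound follows from $\|P_{i\theta}\|_{(\theta)}\leq 1$ applied to the leftover iterates. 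The main obstacle is the precise coordination of $k$, $C_6$, and $\rho^{\tilde n}$ so that the $O(|\theta|)$ Lipschitz contribution of $c$ on a single step does not absorb the $|\theta|^{-\alpha_9}$ gap gained from a single cylinder of measure $|\theta|^{-c}$; this is precisely the role of the modified norm $\|\cdot\|_{(\theta)}$ and of Dolgopyat's specific choice of $C_6$ from \cite[Proposition 2]{Dolgopyat1998rapid}.
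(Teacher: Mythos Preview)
The paper does not supply its own argument for this lemma; it simply cites \cite[Lemma~3]{Dolgopyat1998rapid} and remarks that the stated conclusion can be read off from Dolgopyat's proof. Your sketch is recognisably that argument---propagate the pointwise drop to a cylinder via the Lipschitz bound, push it to a uniform $\|\cdot\|_\infty$-contraction by one application of $P_{i\theta}^k$ (paying the cylinder mass $p_{\min}^k\asymp|\theta|^{-c}$), then close up the Lipschitz half of $\|\cdot\|_{(\theta)}$ with the Lasota--Yorke inequality---and the first two steps are correct.

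There is, however, a genuine slip in the final paragraph. The Lasota--Yorke bound you quote is
$c_1(P_{i\theta}^{\tilde n}\varphi)\leq O(|\theta|)\,\|\varphi\|_\infty+\rho^{\tilde n}c_1(\varphi)$,
with $\|\varphi\|_\infty$ on the right, yet you then claim the normalised Lipschitz part is dominated by $\|P_{i\theta}^{\tilde n}\varphi\|_\infty$. These are different: since only $\|\varphi\|_\infty\leq 1$ is available (with equality permitted), and the minimal admissible choice of $C_6$ gives precisely $A/(2C_6(1-\rho))=1$, the displayed inequality yields only $c_1(P_{i\theta}^{\tilde n}\varphi)/(2C_6|\theta|)\leq 1+\rho^{\tilde n}$, which does not beat $1$. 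The fix is to iterate once more \emph{after} the sup-norm drop is in hand: applying the one-step Lasota--Yorke inequality to $\psi:=P_{i\theta}^{\tilde n}\varphi$, now using $\|\psi\|_\infty\leq 1-\tfrac12|\theta|^{-(\alpha+c)}$ and $c_1(\psi)\leq 2C_6|\theta|$, gives
\[
\frac{c_1(P_{i\theta}\psi)}{2C_6|\theta|}\ \leq\ (1-\rho)\Bigl(1-\tfrac12|\theta|^{-(\alpha+c)}\Bigr)+\rho\ =\ 1-\tfrac{1-\rho}{2}\,|\theta|^{-(\alpha+c)},
\]
so $\|P_{i\theta}^{\tilde n+1}\varphi\|_{(\theta)}\leq 1-\tfrac{1-\rho}{2}|\theta|^{-(\alpha+c)}$. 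With this adjustment (and absorbing the extra step into $\tilde\beta$) your argument goes through; the exponent $\alpha_9=\alpha+c$ is exactly the relation to ``$\alpha$ and the entropy of $\mathbb P$'' the paper alludes to.
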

We remark that $\alpha_9$ is related to $\alpha$ and to the entropy of  $\mathbb{P}$. Notice that the formal conclusion of \cite[Lemma 3]{Dolgopyat1998rapid} is different from that of Lemma \ref{Lemma 3}. Nonetheless, the conclusion of Lemma \ref{Lemma 3} follows from the proof of \cite[Lemma 3]{Dolgopyat1998rapid}  - which is explicitly stated in the argument (for the readers' convenience we use the same notation $\alpha_9,C_{15}$ as in \cite{Dolgopyat1998rapid}).

Next, we  recall what happens if $\Phi$ fails the conditions of Lemma \ref{Lemma 3}. First, we require the following Definition:
\begin{Definition} \label{Def AAE}
We say $\Phi$ has the approximate eigenfunctions (\textit{AAE}) property if for every $\alpha_0>0$ there are $\alpha,\beta> \alpha_0$ such that one can find arbitrarily large $\theta$ satisfying:

There are  $\Theta = \Theta(\theta) \in \mathbb{R}$ and $H=H_\theta\in H^1$ with $|H(\omega)|=1$ for all $\omega\in \mathcal{A}^\mathbb{N}$, such that:
\begin{equation} \label{Eq AAE}
\left|  e^{ i \theta c\left( \omega|_{n(\beta,\theta)}, \, \sigma^{ n(\beta,\theta)} (\omega) \right)} H\left( \sigma^{n(\beta,\theta)} (\omega) \right)  - e^{i\Theta} H  (\omega)  \right| \leq \frac{1}{|\theta|^\alpha}
\end{equation}
and  the Lipschitz norm of $H$ satisfies
$$\max\lbrace ||H||_\infty, c_1(H) \rbrace \leq O(|\theta|).$$
\end{Definition}
We remark that the terminology \textit{AAE} is adopted from \cite[Section 4.3.2]{Kasun020Pene}. The following Lemma is proved in \cite[Section 8]{Dolgopyat1998rapid}:
\begin{Lemma} \cite[Lemma 4]{Dolgopyat1998rapid} \label{Lemma 4}
If $\alpha>0$ fails the conditions of Lemma \ref{Lemma 3} for every $\beta>0$, then there is some $\beta=\beta(\alpha)>0$  and a sequence $|\theta_k|\rightarrow \infty$ with associated sequences of $\Theta_k \in \mathbb{R}$, $H_k \in H^1$ with $|H_k|=1$, such that \eqref{Eq AAE} holds true for all $k$. Furthermore, $\beta$ can be taken to be arbitrarily large, so if $\Phi$ fails the conditions of Lemma \ref{Lemma 3} for every  $\alpha,\beta>0$ then it has the AAE property.
\end{Lemma}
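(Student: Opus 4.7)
My plan is to argue contrapositively. Assume the hypothesis of Lemma~\ref{Lemma 3} fails for every $\beta>0$. Then for each $\beta$ there exist $\theta$ with $|\theta|>1$ and $\varphi = \varphi_\theta \in H^1$ with $\|\varphi\|_{(\theta)} \leq 1$ satisfying
\begin{equation*}
\bigl| P_{i\theta}^n \varphi(\omega) \bigr| > 1 - |\theta|^{-\alpha}  \quad\text{for every } \omega \in \mathcal{A}^\mathbb{N} \text{ and every } 0 \leq n \leq 3 n(\beta,\theta). \tag{$\ast$}
\end{equation*}
Proposition~\ref{Prop compact and small guys}(1) applied to compact intervals $[1,M] \subset \mathbb{R}_+$ forces $|\theta(\beta)| \to \infty$ as $\beta \to \infty$, since $(\ast)$ is incompatible with the strict contraction $\|P_{i\theta}^{n_0}\|_1 \leq C' < 1$ once $3n(\beta,\theta) \gg n_0 / \log(1/C')$. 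The task is therefore to extract from each $\varphi_\theta$ a unit-modulus $H = H_\theta \in H^1$ and a constant $\Theta = \Theta(\theta)$ realising Definition~\ref{Def AAE}.

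The key observation is that $P_{i\theta}^n \varphi(\omega) = \sum_{|\mathbf{a}|=n} p_{\mathbf{a}}\, e^{2\pi i\theta c(\mathbf{a},\omega)} \varphi(\iota_{\mathbf{a}}\omega)$ is a convex combination of complex numbers of modulus $\leq 1$; such a sum can have modulus $>1-\epsilon$ only if every summand of positive weight has modulus $\geq 1-O(\sqrt{\epsilon})$ and argument within $O(\sqrt{\epsilon})$ of $\arg P_{i\theta}^n\varphi(\omega)$. Applied to $(\ast)$ with $\epsilon = |\theta|^{-\alpha}$, the modulus half gives $|\varphi(\iota_{\mathbf{a}}\omega)| \geq 1 - O(|\theta|^{-\alpha/2})$ on the $\rho^{n(\beta,\theta)}$-dense net of preimages; combining this with the Lipschitz bound $c_1(\varphi) \leq 2 C_6 |\theta|$ coming from $\|\varphi\|_{(\theta)}\leq 1$ forces $\bigl||\varphi|-1\bigr| = O(|\theta|^{-\alpha/2})$ uniformly, provided $\beta > (1+\alpha/2)/|\log \rho|$. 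I then set $H := \varphi/|\varphi|$, so that $|H|\equiv 1$ and $c_1(H) = O(|\theta|)$, matching the Lipschitz bound in Definition~\ref{Def AAE}. Simultaneously, the phase half yields
\begin{equation*}
e^{2\pi i\theta c(\mathbf{a},\omega)} H(\iota_{\mathbf{a}}\omega) = \lambda(\omega) H(\omega) + O(|\theta|^{-\alpha/2}), \qquad \lambda(\omega) := P_{i\theta}^{n}\varphi(\omega)/\varphi(\omega),
\end{equation*}
uniformly in $\omega$ and in $\mathbf{a}$ of length $n := n(\beta,\theta)$. Applying this with $\omega$ replaced by $\sigma^n\omega$ and $\mathbf{a} = \omega|_n$, so that $\iota_{\mathbf{a}}\sigma^n\omega = \omega$, produces an approximate eigenfunction identity of AAE type, except with the spatially varying quantity $\lambda(\sigma^n\omega)$ in place of the required constant $e^{i\Theta}$.

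The main obstacle, and the reason the enlarged range $3n(\beta,\theta)$ is imposed in Lemma~\ref{Lemma 3}, is to promote this local eigenvalue to a global constant. For this I would invoke $(\ast)$ at level $2n$, define $\lambda_{2n}(\omega) := P_{i\theta}^{2n}\varphi(\omega)/\varphi(\omega)$, and expand $P_{i\theta}^{2n}\varphi(\omega) = \sum_{|\mathbf{a}|=n} p_{\mathbf{a}}\,e^{2\pi i\theta c(\mathbf{a},\omega)}(P_{i\theta}^n\varphi)(\iota_{\mathbf{a}}\omega)$; substituting the level-$n$ relation into each factor yields the approximate identity
\begin{equation*}
\lambda_{2n}(\omega) \approx \lambda(\omega) \sum_{|\mathbf{a}|=n} p_{\mathbf{a}}\, \lambda(\iota_{\mathbf{a}}\omega),
\end{equation*}
in which the right-hand sum must itself have modulus close to $1$, since $|\lambda_{2n}|,|\lambda|$ are both close to $1$. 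As this sum is another convex combination of unit-modulus numbers, its near-maximality forces $\lambda(\iota_{\mathbf{a}}\omega) \approx \lambda(\iota_{\mathbf{a}'}\omega)$ for any two length-$n$ words; coupled with the Lipschitz regularity of $\lambda$ inherited from $H$ and $\varphi$, this upgrades to $\lambda(\omega) \approx e^{i\Theta}$ uniformly, with $\Theta := \arg\lambda(\omega^*)$ taken at any fixed reference point $\omega^*$. Absorbing the accumulated $O(\sqrt{\cdot})$ losses into a slightly smaller positive exponent, and noting that $\beta$ may be chosen as large as desired (allowed since the hypothesis of Lemma~\ref{Lemma 3} is assumed to fail for every $\beta$), delivers the AAE condition along a sequence $|\theta_k|\to\infty$, establishing both the main statement and the ``furthermore'' clause.
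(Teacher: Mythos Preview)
Your sketch follows Dolgopyat's argument, which is precisely what the paper cites rather than proving independently; the convex-combination extraction of $H=\varphi/|\varphi|$ and the two-level promotion of the spatially varying $\lambda(\omega)$ to a global constant $e^{i\Theta}$ is exactly the mechanism of \cite[Lemma~5]{Dolgopyat1998rapid}, which the paper singles out as the crucial step.

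One point deserves care. Your appeal to Proposition~\ref{Prop compact and small guys}(1) to force $|\theta(\beta)|\to\infty$ imports the Diophantine or not-conjugate-to-linear hypothesis, which is not among the stated hypotheses of Lemma~\ref{Lemma 4}. This is harmless for the paper's purposes, since the lemma is only ever invoked inside the proof of Theorem~\ref{Thm Dolgopyat} where those hypotheses are in force; but it means your argument does not establish Lemma~\ref{Lemma 4} in the generality in which it is formally stated. (Your argument also does not obviously exclude the possibility $|\theta(\beta)|\to 1$, where $n(\beta,\theta)$ may stay bounded and $(\ast)$ becomes vacuous.) In Dolgopyat's original formulation this step is handled through his resolvent-norm assumption, which the paper explicitly remarks is being replaced here by the weaker hypothesis ``failure of Lemma~\ref{Lemma 3}''. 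A second, minor issue: the square-root loss in the convex-combination step degrades the exponent from $\alpha$ to roughly $\alpha/2$, so the first clause of the lemma (for a fixed $\alpha$) is obtained only with a smaller exponent. This is irrelevant for the ``furthermore'' clause and the AAE property, which is all that the paper actually uses downstream.
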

Notice that  \cite[Lemma 4]{Dolgopyat1998rapid} is stated in terms of approximate eigenfunctions of  iterations of a certain operator defined in \cite[Page 2]{Dolgopyat1998rapid} - our statement avoids this notation, and follows by unwinding Dolgopyat's definitions. Next, a-priori \cite[Lemma 4]{Dolgopyat1998rapid} makes a  different assumption, about the norm of the resolvent operator, but for the proof of \cite[Lemma 5]{Dolgopyat1998rapid} (which is the crucial step in the proof) only \cite[Equation (3)]{Dolgopyat1998rapid} is required - and this is precisely the assumption made in Lemma \ref{Lemma 4}. We remark that Dolgopyat's extra assumption on the norm of the resolvent operator is required for his analysis in \cite[Section 9]{Dolgopyat1998rapid}, which allows him to upgrade the conclusion of Lemma \ref{Lemma 4} into having $\Theta\equiv 0$. We do not know if in our setting such a bound  on the norm of the resolvent operator holds true.

\subsubsection{Proof of Theorem \ref{Thm Dolgopyat} under the Diophantine condition \eqref{Eq new Dio condition}}

We  show that if $\Phi$ satisfies the Diophantine condition \eqref{Eq new Dio condition}  then there is some $\alpha>0$ that  satisfies the conditions of Lemma \ref{Lemma 3}. Thus, via the conclusion of Lemma \ref{Lemma 3}, Theorem \ref{Thm Dolgopyat} will follow. Suppose that $\alpha>0$ fails the conditions of Lemma \ref{Lemma 3} for every $\beta>0$. Then by Lemma \ref{Lemma 4} there is some $\beta=\beta(\alpha)>0$ such that  we can find  a sequence $|\theta_k|\rightarrow \infty$ with associated sequences of $\Theta_k \in \mathbb{R}$, $H_k \in H^1$ with $|H_k|=1$, such that
\begin{equation} \label{Eq sim approx}
\left|  e^{ i \theta_k c\left( \omega|_{n(\beta,\theta_k)}, \, \sigma^{ n(\beta,\theta_k)} (\omega) \right)} H_k\left( \sigma^{n(\beta,\theta)} (\omega) \right)  - e^{i\Theta_k} H  (\omega)  \right| \leq \frac{1}{|\theta_k|^\alpha}.
\end{equation}
Now, for every $a\in \mathcal{A}$ let $\overline{a} \in \mathcal{A}^\mathbb{N}$ be the constant sequence $a$. Let  $x_a$ be the fixed point of $f_a$. It follows from \eqref{Eq sim approx} by plugging in $\omega = \overline{a},a\in \mathcal{A}$, that for every $k$ there is some $y_k \in \mathbb{R}$ (that corresponds to $\Theta_k$),  such that for $m_a\in \mathbb{Z}$ that may differ between the $a$'s,
$$\theta_k\cdot n(\beta,\theta_k)\cdot \log|f'_a (x_a)|+y_k= 2 \pi m_a  +O(|\theta_k|^{-\alpha}).$$ 
Therefore, for all $k$ we get
$$\inf_{y\in \mathbb{R}} \max_{a\in \mathcal{A}} d\left( \frac{1}{2\pi} \cdot \theta_k\cdot n(\beta,\theta_k) \cdot \log|f'_a (x_a)|+y, \, \mathbb{Z} \right)  = O(|\theta_k|^{-\alpha}).$$
On the other hand, by the Diophantine condition  there are  $\ell, C>0$ such that for every $s\in \mathbb{R}$ large enough in absolute value,
$$\inf_{y\in \mathbb{R}} \max_{a\in \mathcal{A}} d\left( s \cdot \log|f'_a (x_a)|+y, \, \mathbb{Z} \right)  \geq \frac{C}{|s|^\ell}.$$

Combining the last two displayed equations and using that as $k\rightarrow \infty$ we have $|\theta_k|\rightarrow \infty$,  we see that $\alpha \leq \ell$. Therefore, for every $\alpha>\ell$ there exists some $\beta>0$ such that the conditions of Lemma \ref{Lemma 3} hold true. This completes the proof of Theorem \ref{Thm Dolgopyat} in this case. \hfill{ $\Box$}

\subsubsection{Proof of Theorem \ref{Thm Dolgopyat} assuming $\Phi$ is not conjugate to linear}
We now prove Theorem \ref{Thm Dolgopyat} assuming $\Phi$ is not conjugate to linear. First, we require the following definition, that is originally due to Chernov \cite{Chernov1998Markov}.
 \begin{Definition} \cite[Appendix A.1]{Dolgopyat1998rapid}
The symbolic  temporal distance function \newline$D:\mathcal{A}^\mathbb{N} \times \mathcal{A}^\mathbb{N} \times \mathcal{A}^\mathbb{N} \times \mathcal{A}^\mathbb{N} \rightarrow \mathbb{R}$  is defined by
$$D(\xi, \zeta,\omega,\eta) :=\lim_n \left( \left( \log f_{\xi|_n} '  (x_\omega) -  \log f_{\xi|_n} '   (x_\eta)  \right) - \left( \log f_{\zeta|_n} '  (x_\omega) -  \log f_{\zeta|_n} '    (x_\eta)  \right) \right). $$
The Euclidean temporal distance function $E:\mathcal{A}^\mathbb{N} \times \mathcal{A}^\mathbb{N} \times I \times I \rightarrow \mathbb{R}$, where $I$ is the interval $\Phi$ is acting on, is defined by
$$D(\xi, \zeta,x,y) :=\lim_n \left( \left( \log f_{\xi|_n} '   (x) -  \log f_{\xi|_n} '   (y)  \right) - \left( \log f_{\zeta|_n} '   (x) -  \log f_{\zeta|_n} '   (y)  \right) \right). $$
\end{Definition}
Notice that $D$ and $E$ are well defined since $\Phi$ is uniformly contracting and $C^2$. The following Theorem is essentially \cite[Theorem 6]{Dolgopyat1998rapid}, with some variations similar to \cite[Theorem 5.6]{Melbourne2018rapid}. For a bounded set $X\subset \mathbb{R}$ we denote its lower box dimension by  $\underline{\dim}_B X$.

\begin{theorem} \label{Theorem AAE and box dim}
If $\Phi$ has the AAE property  then $\underline{\dim}_B D\left( \mathcal{A}^\mathbb{N} \times \mathcal{A}^\mathbb{N} \times \mathcal{A}^\mathbb{N} \times \mathcal{A}^\mathbb{N} \right)=0$.
\end{theorem}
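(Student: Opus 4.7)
The plan is to use the AAE hypothesis to force the image of $D$ into very small neighborhoods of the arithmetic progressions $\tfrac{2\pi}{\theta_k}\mathbb{Z}$ at arbitrarily fine scales, and to read off vanishing lower box dimension from the resulting covering estimate. Fix $\alpha_0>0$ and invoke the AAE property to obtain $\alpha,\beta>\alpha_0$, a sequence $|\theta_k|\to\infty$, phases $\Theta_k\in\mathbb{R}$, and functions $H_k\in H^1$ with $|H_k|\equiv 1$ and Lipschitz constant $c_1(H_k)\leq O(|\theta_k|)$ satisfying \eqref{Eq AAE}. Write $N_k:=n(\beta,\theta_k)=[\beta\log|\theta_k|]$.

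For arbitrary $\xi,\zeta,a,b\in\mathcal{A}^{\mathbb{N}}$, we plug the four concatenated sequences $\xi|_{N_k}\cdot a$, $\zeta|_{N_k}\cdot a$, $\xi|_{N_k}\cdot b$, $\zeta|_{N_k}\cdot b$ into \eqref{Eq AAE}; each has prescribed initial block of length $N_k$ and prescribed tail $a$ or $b$. Multiplying by $\overline{H_k(\sigma^{N_k}\omega)}$ and dividing the two equations that share the tail $a$ eliminates the phase $e^{i\Theta_k}$ and the factor $H_k(a)$, yielding
$$e^{i\theta_k(c(\xi|_{N_k},a)-c(\zeta|_{N_k},a))}=\frac{H_k(\xi|_{N_k}\cdot a)}{H_k(\zeta|_{N_k}\cdot a)}+O(|\theta_k|^{-\alpha})$$
and the analogous identity with $b$ in place of $a$. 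Because $d_\rho(\xi|_{N_k}\cdot a,\xi|_{N_k}\cdot b)\leq \rho^{N_k}$ while $c_1(H_k)\leq O(|\theta_k|)$, the two right-hand sides agree up to $O(|\theta_k|\rho^{N_k})=O(|\theta_k|^{1+\beta\log\rho})$; since $\log\rho<0$ and $\beta>\alpha_0$ can be taken arbitrarily large, this is $O(|\theta_k|^{-\eta})$ with $\eta=\eta(\alpha_0)\to\infty$ as $\alpha_0\to\infty$. Setting
$$B_n(\xi,\zeta,a,b):=(c(\xi|_n,a)-c(\zeta|_n,a))-(c(\xi|_n,b)-c(\zeta|_n,b)),$$
we conclude $\theta_k\, B_{N_k}(\xi,\zeta,a,b)\in 2\pi\mathbb{Z}+O(|\theta_k|^{-\gamma})$ with $\gamma=\min(\alpha,\eta)$.

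The next ingredient is that, by the chain-rule expansion of the derivative cocycle together with the uniform $C^2$ bounded distortion of $\Phi$, $B_n(\xi,\zeta,a,b)$ converges to $-D(\xi,\zeta,a,b)$ at the exponential rate $O(\rho^n)$. Substituting $B_{N_k}=-D+O(\rho^{N_k})$ and noting $\theta_k\rho^{N_k}=O(|\theta_k|^{-\eta})$ gives
$$\theta_k\, D(\xi,\zeta,a,b)\in 2\pi\mathbb{Z}+O(|\theta_k|^{-\gamma'}),\qquad\gamma'=\gamma'(\alpha_0)\to\infty,$$
so every value of $D$ lies within distance $O(|\theta_k|^{-1-\gamma'})$ of the lattice $\tfrac{2\pi}{\theta_k}\mathbb{Z}$. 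The image of $D$ is uniformly bounded by the same distortion estimate, say contained in $[-M,M]$, so the lattice has $O(|\theta_k|)$ relevant points and the image is covered by $O(|\theta_k|)$ intervals of length $O(|\theta_k|^{-1-\gamma'})$. Along the sequence $|\theta_k|\to\infty$ this yields
$$\underline{\dim}_B D\bigl(\mathcal{A}^{\mathbb{N}}\times\mathcal{A}^{\mathbb{N}}\times\mathcal{A}^{\mathbb{N}}\times\mathcal{A}^{\mathbb{N}}\bigr)\leq \frac{1}{1+\gamma'},$$
and letting $\alpha_0\to\infty$ drives the right-hand side to zero.

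The main obstacle is the simultaneous balancing of three polynomial errors in $|\theta_k|$: the AAE defect $|\theta_k|^{-\alpha}$ coming from \eqref{Eq AAE}, the Lipschitz defect $|\theta_k|^{1+\beta\log\rho}$ arising from pairing the rough Lipschitz norm of $H_k$ against the rapid contraction of $d_\rho$ on a common prefix of length $N_k$, and the bounded-distortion tail $O(\theta_k\rho^{N_k})$ in approximating $D$ by $B_{N_k}$. All three can be forced to $O(|\theta_k|^{-\alpha_0'})$ for arbitrarily large $\alpha_0'$ only because the AAE definition permits $\alpha$ and $\beta$ to be taken arbitrarily large at the same time.
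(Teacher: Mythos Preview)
Your proof is correct and follows essentially the same strategy as the paper's: exploit the AAE relations at the four concatenated words $\xi|_{N_k}\cdot a$, $\zeta|_{N_k}\cdot a$, $\xi|_{N_k}\cdot b$, $\zeta|_{N_k}\cdot b$, cancel the phase $e^{i\Theta_k}$ and the tail values $H_k(a),H_k(b)$, control the residual $H_k$-quotients via the Lipschitz bound $c_1(H_k)=O(|\theta_k|)$ against the prefix contraction $\rho^{N_k}$, and deduce that the image of $D$ lies in an $O(|\theta_k|^{-1-\gamma})$-neighborhood of $\tfrac{2\pi}{\theta_k}\mathbb{Z}$. The only cosmetic difference is that the paper works directly with $\exp(i\theta D)$ via the truncation $D_n$ and absorbs the Lipschitz error into the AAE error by choosing $\beta$ large enough so that $|\theta|\rho^{n(\beta,\theta)}\leq \rho^{-1}|\theta|^{-\alpha}$, obtaining the clean bound $\underline{\dim}_B\leq \tfrac{1}{\alpha+1}$; you instead carry the two errors separately as $\gamma=\min(\alpha,\eta)$ and pass through the intermediate quantity $B_n=-D_n$, which is equivalent.
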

\begin{proof}
First, for every $n\in \mathbb{N}$ and $(\xi, \zeta, \omega,\eta)\in (\mathcal{A}^\mathbb{N})^4$ we define 
$$D_n(\xi, \zeta, \omega,\eta) := \left( \log f_{\xi|_n} '   (x_\omega) -  \log f_{\xi|_n} '   (x_\eta)  \right) - \left( \log f_{\zeta|_n} '   (x_\omega) -  \log f_{\zeta|_n} '  (x_\eta)  \right)$$
and notice that, since $\rho = \sup_{f\in \Phi} ||f'||_\infty < 1$, we have
$$ D(\xi, \zeta,\omega,\eta) = D_n(\xi, \zeta,\omega,\eta) +O\left(   \rho^n \right).$$
Combining this with the definition of $c$,
\begin{eqnarray*}
\exp\left( i \theta D(\xi, \zeta,\omega,\eta) \right) & =& \exp\left( i \theta D_n(\xi, \zeta,\omega,\eta) \right) + O(|\theta| \cdot \rho^n) \\
& =& \frac{ \exp\left( i \theta \log f_{\xi|_n} '  (x_\omega) \right)  }{ \exp \left( i \theta \log f_{\xi|_n} '   (x_\eta) \right) } \cdot \frac{ \exp\left( i \theta \log f_{\zeta|_n} '  (x_\eta) \right)  }{ \exp \left( i \theta \log f_{\zeta|_n} '  (x_\omega) \right) }   + O(|\theta| \cdot \rho^n) \\
&=& \frac{ \exp\left( i \theta c\left( \left(\xi|_n .\omega\right)|_n, \, \sigma^n (\xi|_n .\omega) \right)  \right)  }{ \exp\left( i \theta c\left( \left(\xi|_n .\eta\right)|_n, \, \sigma^n (\xi|_n .\eta) \right)  \right) } \cdot \frac{ \exp\left( i \theta c\left( \left(\zeta|_n .\eta\right)|_n, \, \sigma^n (\zeta|_n .\eta) \right)  \right) }{ \exp\left( i \theta c\left( \left(\zeta|_n .\omega\right)|_n, \, \sigma^n (\zeta|_n .\omega) \right)  \right) } \\
&+&  O(|\theta| \cdot \rho^n). 
\end{eqnarray*}
Let $\alpha_0>0$ be fixed, and let $\alpha,\beta>\alpha_0$. Using the AAE property and the equation above, we can find arbitrarily large $\theta$ and $H=H_\theta\in H^1$ as in Definition \ref{Def AAE} such that we have
$$\exp\left( i \theta D(\xi, \zeta,\omega,\eta) \right)= \frac{H(\xi|_{n(\beta,\theta)}.\omega)}{H(\xi|_{n(\beta,\theta)}.\eta)} \cdot \frac{H(\zeta|_{n(\beta,\theta)}.\eta)}{H(\zeta|_{n(\beta,\theta)}.\omega)} + O(|\theta|^{-\alpha})  +O(|\theta| \cdot \rho^{n(\beta,\theta)}).$$
Since  $n(\beta,\theta)=[\beta \log |\theta|]$, via Lemma \ref{Lemma 4} we may assume $\beta$ is large enough so that we have 
$$|\theta| \rho^{n(\beta,\theta)} \leq \rho^{-1} \cdot |\theta|^{-\alpha}.$$
Therefore, since $|H|\equiv 1$ we have
$$ \left| \frac{H(\xi|_{n(\beta,\theta)}.\omega)}{H(\xi|_{n(\beta,\theta)}.\eta)} - 1 \right| =   \left| H(\xi|_{n(\beta,\theta)}.\omega) - H(\xi|_{n(\beta,\theta)}.\eta) \right| \leq c_1(H)\cdot d_\rho( \xi|_{n(\beta,\theta)}.\eta,\, \xi|_{n(\beta,\theta)}.\omega)$$
$$ \leq O( |\theta| \rho^{{n(\beta,\theta)}}) \leq O(|\theta|^{-\alpha}). $$
Since the same is true for the term corresponding to $\zeta$, it follows that
$$\left| \exp\left( i \theta D(\xi, \zeta, \omega,\eta) \right) - 1 \right| = O(|\theta|^{-\alpha})$$
for arbitrarily large $\theta$ and every $(\xi,\eta,\omega,\eta)$. Thus, there is some $C=C(\alpha)$ such that for arbitrarily large $\theta$,
$$D( \mathcal{A}^\mathbb{N} \times \mathcal{A}^\mathbb{N} \times \mathcal{A}^\mathbb{N} \times \mathcal{A}^\mathbb{N} ) \subseteq \bigcup_{j\in \mathbb{Z}} \left( \frac{2\pi j}{|\theta|}-\frac{C}{|\theta|^{\alpha+1}}, \frac{2\pi j}{|\theta|}+\frac{C}{|\theta|^{\alpha+1}} \right).$$
So, 
$$ \underline{\dim}_B (D( \mathcal{A}^\mathbb{N} \times \mathcal{A}^\mathbb{N} \times \mathcal{A}^\mathbb{N} \times \mathcal{A}^\mathbb{N} )) \leq \frac{1}{\alpha+1}.$$
The result follows since $\alpha$ can be made arbitrarily large.
\end{proof}
Thus, it is our main task to verify that in the not-conjugate-to-linear setting, the box dimension appearing in Theorem \ref{Theorem AAE and box dim} cannot vanish. To this end, we adopt a variant of Naud's non local integrability condition \cite[Definitions 2.1-2.2]{Naud2005exp}:

\begin{Lemma} \label{Lemma sufficent holds}
 If there exist $(\xi,\zeta, \omega,\eta) \in \left( \mathcal{A}^\mathbb{N} \right)^4$ such that the function 
$$g:I\rightarrow \mathbb{R},\quad g(x) = E(\xi,\zeta,x ,x_\eta )$$
satisfies that $g'(x_\omega)\neq0$, then $\underline{\dim}_B D( \mathcal{A}^\mathbb{N} \times \mathcal{A}^\mathbb{N} \times \mathcal{A}^\mathbb{N} \times \mathcal{A}^\mathbb{N} )>0$.

In particular, $\Phi$ fails the AAE property.
\end{Lemma}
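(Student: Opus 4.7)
The plan is to bound $\underline{\dim}_B D\left( (\mathcal{A}^\mathbb{N})^4 \right)$ from below by $\underline{\dim}_B K$ and then observe that the latter is positive. The ``in particular'' statement is then immediate from the contrapositive of Theorem \ref{Theorem AAE and box dim}. The core observation is that, for the specific $(\xi,\zeta,\eta)$ given by the hypothesis, the map $\omega' \mapsto D(\xi,\zeta,\omega',\eta)$ factors through the $C^1$ function $g$ evaluated at $x_{\omega'} \in K$, and $g$ is bi-Lipschitz near $x_\omega$.

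First, I would verify that $g$ is $C^1$ on $I$. Writing
$$g(x) = \lim_n \bigl[(\log f'_{\xi|_n}(x) - \log f'_{\xi|_n}(x_\eta)) - (\log f'_{\zeta|_n}(x) - \log f'_{\zeta|_n}(x_\eta))\bigr],$$
the $C^2$ smoothness of $\Phi$ combined with the standard bounded distortion lemma yields uniformly bounded derivatives of $\log f'_{\xi|_n}$ on $I$, together with geometric decay of successive differences in $n$. Term-by-term differentiation in the resulting uniformly convergent series gives $g \in C^1(I)$. The hypothesis $g'(x_\omega) \neq 0$, together with continuity of $g'$, then supplies an open neighborhood $U \subseteq I$ of $x_\omega$ on which $g|_U$ is a $C^1$ diffeomorphism onto its image, and hence bi-Lipschitz.

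Next, I would push $K$ into $U$ using the IFS itself to harvest the full lower box dimension of $K$. By uniform contraction, choose $N$ so large that $f_{\omega|_N}(I) \subseteq U$; then $f_{\omega|_N}(K) \subseteq K \cap U$. Since $f_{\omega|_N}$ is itself bi-Lipschitz on $I$ (by \eqref{Eq C and C prime} and the chain rule), the composition $\Psi := g \circ f_{\omega|_N} : K \to \mathbb{R}$ is a bi-Lipschitz embedding. Unwinding the definitions, $g(x_{\omega'}) = E(\xi,\zeta, x_{\omega'}, x_\eta) = D(\xi,\zeta,\omega',\eta)$ for every $\omega' \in \mathcal{A}^\mathbb{N}$, so $\Psi(K) \subseteq g(K) \subseteq D((\mathcal{A}^\mathbb{N})^4)$. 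Bi-Lipschitz maps preserve lower box dimension, and so $\underline{\dim}_B D((\mathcal{A}^\mathbb{N})^4) \geq \underline{\dim}_B K$. Finally, to see $\underline{\dim}_B K > 0$, I pick $f_i, f_j \in \Phi$ with $x_i \neq x_j$ (which exist by assumption); for $N$ large the iterates $f_i^N, f_j^N$ contract $I$ into disjoint neighborhoods of $x_i$ and $x_j$, producing a strongly separated two-map sub-IFS whose conformal attractor $\widetilde K \subseteq K$ is a Cantor set with positive Hausdorff dimension.

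The main delicate point is the $C^1$ regularity of $g$ in the first step, which requires invoking bounded distortion carefully in order to differentiate inside the limit defining $g$. Everything else is a routine change of variables combined with the standard positivity of the dimension of a non-trivial conformal attractor; note that no separation hypothesis on $\Phi$ is used directly, since the sub-IFS $\{f_i^N, f_j^N\}$ achieves separation simply by iteration.
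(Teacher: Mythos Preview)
Your proof is correct and follows essentially the same approach as the paper: both identify $g$ as bi-Lipschitz on a neighborhood of $x_\omega$, push $K$ into that neighborhood via $f_{\omega|_N}$, and conclude that the image has the same (positive) dimension as $K$. Your version is somewhat more careful than the paper's---you justify the $C^1$ regularity of $g$, work directly with lower box dimension rather than passing through Hausdorff dimension, and explicitly argue $\underline{\dim}_B K>0$ via a separated sub-IFS---but the underlying idea is identical.
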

\begin{proof}
The assumption means that  there is some $n\in \mathbb{N}$ such that $g'$ does not vanish on $f_{\omega|_n}(K)$. This means that $g'$ is bi-Lipschitz on $f_{\omega|_n}(K)$.  So,
$$\dim_H  D(\xi,\zeta, \mathcal{A}^\mathbb{N},\eta) = \dim_H  E(\xi,\zeta, K,x_\eta)  \geq  \dim_H E(\xi,\zeta, f_{\omega|_n}(K),x_\eta)$$
$$  = g(f_{\omega|_n}(K)) = \dim_H K >0.$$
The last part of the Lemma now follows via Theorem \ref{Theorem AAE and box dim}.
\end{proof}
We proceed to prove two Claims that together will allow us to verify the conditions of Lemma \ref{Lemma sufficent holds} in our setting. We follow the general strategy of Avila-Gou\"{e}zel-Yoccoz  \cite[Proposition 7.4]{Avila2006yoccoz}, with some modifications due to the possible lack of separation in the IFS.

\begin{Claim} \label{Claim next sufficinet} 
If there exists some $c>0$ such that for infinitely many $n$ there are $\xi=\xi(n),\zeta=\zeta(n) \in \mathcal{A}^\mathbb{N}$ such that for some $x_0=x_0(n)\in K$
$$\left| \frac{d}{dx} \left( \log f_{\xi|_n} '    -  \log f_{\zeta|_n} '   \right) \left(x_0 \right) \right| \geq c$$
then the condition of Lemma \ref{Lemma sufficent holds} holds.
\end{Claim}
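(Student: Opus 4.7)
First I would apply the chain rule to write
\[
\frac{d}{dx}\log f'_{\xi|_n}(x) = \sum_{\ell=0}^{n-1} (f_{\xi_{n-\ell+1}}\circ\cdots\circ f_{\xi_n})'(x) \cdot \phi'_{\xi_{n-\ell}}(f_{\xi_{n-\ell+1}}\circ\cdots\circ f_{\xi_n}(x)),
\]
where $\phi_i := \log f_i'$. The key structural observation is that the $\ell$-th summand depends only on the last $\ell+1$ letters of $\xi|_n$ and is bounded in absolute value by $M\rho^\ell$ for some uniform constant $M$ (by uniform contraction and the $C^2$-smoothness of $\Phi$). In particular, only the ``tail'' of $\xi|_n$ contributes significantly to the derivative, and the sum converges absolutely if the tail is stabilized.

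Second, I would apply a pigeonhole-plus-Cantor-diagonal argument to the data in the hypothesis. Since $\mathcal{A}$ is finite, for every $L$ the tuple of last $L$ letters of $\xi(n)|_n$ takes only $|\mathcal{A}|^L$ possible values; passing to a subsequence and diagonalizing over $L$ yields indices $n_k\to\infty$ together with two infinite sequences $\xi^\#,\zeta^\# \in \mathcal{A}^\mathbb{N}$ such that, along $n_k$, the $(\ell+1)$-th letter from the end of $\xi(n_k)|_{n_k}$ stabilizes to $\xi^\#_{\ell+1}$ for every $\ell\ge 0$, and analogously for $\zeta^\#$. Compactness of $K$ lets me arrange simultaneously that $x_0(n_k)\to x^*\in K$.

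Third, the uniform $\rho^\ell$-bound combined with the stabilization lets me apply dominated convergence termwise in the $\ell$-sum to get
\[
\lim_{k\to\infty} \frac{d}{dx}\left(\log f'_{\xi(n_k)|_{n_k}} - \log f'_{\zeta(n_k)|_{n_k}}\right)(x_0(n_k)) = \Psi(\xi^\#, \zeta^\#, x^*),
\]
for an absolutely convergent series $\Psi$ depending only on $\xi^\#,\zeta^\#,x^*$. The hypothesis forces $|\Psi(\xi^\#, \zeta^\#, x^*)|\ge c$. To finish, I would identify $\Psi(\xi^\#,\zeta^\#, x)$ with $\frac{\partial}{\partial x} E(\xi^\#, \zeta^\#, x, x_\eta)$ for a suitable $\eta$: along the chosen subsequence and for the stabilized tails, the finite-$n$ primitive of $\Psi$ matches the defining limit of $E$ (using the $O(\rho^n)$ approximation $D = D_n + O(\rho^n)$ already exploited in the proof of Theorem \ref{Theorem AAE and box dim}). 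Since $x^* \in K$, we may write $x^* = x_\omega$ for some $\omega\in\mathcal{A}^\mathbb{N}$, giving $g'(x_\omega)\ne 0$ as required by Lemma \ref{Lemma sufficent holds}. The main obstacle is this last identification: the series $\Psi$ naturally encodes a reverse-tail structure, whereas $E$ is built from forward compositions, so care must be taken when exchanging the limit in $k$ with $x$-integration in order to recover $E(\xi^\#,\zeta^\#,\cdot,x_\eta)$ on the nose rather than only up to an additive ambiguity that could fail to preserve the non-vanishing.
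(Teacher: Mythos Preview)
Your direct route --- extract $(\xi^\#,\zeta^\#,x^*)$ by a diagonal argument and then try to witness the condition of Lemma~\ref{Lemma sufficent holds} for these --- is considerably more involved than what the paper does, and the obstacle you yourself flag in the last paragraph is genuine and left unresolved. The paper argues by a one-line contrapositive. Assume $g'(x)$ vanishes for \emph{every} choice of $\xi,\zeta,\eta$ and every $x\in K$. Writing $g'(x)$ as the full chain-rule series and splitting off the contribution of the first $n$ letters, the paper observes that
\[
\tfrac{d}{dx}\bigl(\log f'_{\xi|_n}-\log f'_{\zeta|_n}\bigr)(x)\;=\;g'(x)\;-\;(\text{tail})\;=\;0-O(\rho^{n-1}),
\]
with the $O(\rho^{n-1})$ bound uniform over $\xi,\zeta$ and $x\in K$ (each remaining term is at most $\sup|f''/f'|\cdot\|f'_{\xi|_{k-1}}\|_\infty=O(\rho^{k-1})$). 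Specializing to $\xi=\xi(n),\ \zeta=\zeta(n),\ x=x_0(n)$ immediately contradicts the hypothesis $\ge c$ for large $n$. No diagonalization, no stabilized tails, no identification step is needed: for \emph{fixed} $\xi,\zeta$ the finite-$n$ derivative already sits $O(\rho^n)$-close to $g'$, and that is all one uses.

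The obstacle you name is real, and ``exchanging the limit with $x$-integration'' does not close it. Your series $\Psi$ is built from the compositions $f_{\xi^\#_\ell}\circ\cdots\circ f_{\xi^\#_1}$ coming from the stabilized \emph{last} letters of $\xi(n_k)|_{n_k}$, whereas the $x$-derivative of $E(\xi^\#,\zeta^\#,\cdot,x_\eta)$ involves the forward compositions $f_{\xi^\#|_\ell}$; these are genuinely different maps, so there is no a priori reason $\Psi$ should coincide with $g'$ for the parameters $(\xi^\#,\zeta^\#)$ you produced. What would rescue the situation is precisely the observation underlying the paper's contrapositive --- at which point the diagonal machinery becomes superfluous.
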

\begin{proof}
Suppose the condition of Lemma \ref{Lemma sufficent holds} fails. Then  for every $(\xi,\zeta,\eta) \in (\mathcal{A}^\mathbb{N})^3$ the corresponding function $g$ as in Lemma \ref{Lemma sufficent holds} satisfies $g'(x)=0$ for every $x\in K$ . So,  for all $x\in K$ and every $n$
\begin{eqnarray*}
0&=& g'(x) \\
&=& \lim_k \frac{d}{dx} \left( \log f_{\xi|_k} '   (x) -  \log f_{\zeta|_k} '   (x)  \right) \\
&=& \sum_{k=1} \frac{  f_{\xi_k} '' \circ f_{\xi|_{k-1}} (x) \cdot f_{\xi|_{k-1}} ' (x)   }{f_{\xi_k} ' \circ f_{\xi|_{k-1}} (x) } - \sum_{k=1} \frac{  f_{\zeta_k} '' \circ f_{\zeta|_{k-1}} (x) \cdot f_{\zeta|_{k-1}} ' (x)   }{f_{\zeta_k} ' \circ f_{\zeta|_{k-1}} (x) } \\
&=& \frac{d}{dx} \left( \log f_{\xi|_n} '   (x) -  \log f_{\zeta|_n} '   (x)  \right) \\
&+&  \sum_{k\geq n} \frac{  f_{\xi_k} '' \circ f_{\xi|_{k-1}} (x) \cdot f_{\xi|_{k-1}} ' (x)   }{f_{\xi_k} ' \circ f_{\xi|_{k-1}} (x) } - \sum_{k\geq n} \frac{  f_{\zeta_k} '' \circ f_{\zeta|_{k-1}} (x) \cdot f_{\zeta|_{k-1}} ' (x)   }{f_{\zeta_k} ' \circ f_{\zeta|_{k-1}} (x) }. 
\end{eqnarray*}
Since $\sup_{x\in I, f\in \Phi} \left| \frac{f''(x)}{f'(x)} \right| =O(1)$, we obtain
$$ \left\Vert \frac{d}{dx} \left( \log f_{\xi|_n} ' -  \log f_{\zeta|_n} '  \right) \right\Vert_{\infty,K} = O( \sup_{f\in \Phi} ||f'||_\infty ^{n-1} ).$$
This contradicts our assumptions.
\end{proof}
Here is the final ingredient in our proof:
\begin{Claim} \label{Claim final ing}
If the condition in Claim \ref{Claim next sufficinet} fails then $\Phi$ is $C^r$ conjugate to a  an IFS $\Psi$ such that
$$g''(x) = 0 \text{ for every } x\in K_\Psi \text{ and } g\in \Psi.$$
\end{Claim}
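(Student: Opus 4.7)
The plan is to extract a Livšic-type cohomological equation from the failure hypothesis and use it to build the conjugating diffeomorphism explicitly. Failure of Claim~\ref{Claim next sufficinet} precisely states that, uniformly over $\xi, \zeta \in \mathcal{A}^{\mathbb{N}}$ and $x \in K$,
$$\frac{d}{dx}\bigl(\log f_{\xi|_n}' - \log f_{\zeta|_n}'\bigr)(x) \longrightarrow 0 \quad \text{as } n \to \infty.$$
The telescoping identity used in the proof of Claim~\ref{Claim next sufficinet} expresses $\frac{d}{dx} \log f_{\omega|_n}'(x)$ as the $n$-th partial sum of
$$\tau(\omega, x) := \sum_{k=1}^{\infty} \frac{f_{\omega_k}''(f_{\omega|_{k-1}}(x)) \cdot f_{\omega|_{k-1}}'(x)}{f_{\omega_k}'(f_{\omega|_{k-1}}(x))},$$
whose $k$-th summand is $O(\rho^{k-1})$ by Claim~\ref{Properties of cocycle}; hence the series converges uniformly on $I$. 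The failure hypothesis then forces $\tau(\xi, x) = \tau(\zeta, x)$ for all $\xi, \zeta \in \mathcal{A}^{\mathbb{N}}$ and all $x \in K$. Fix once and for all some $\omega^\ast \in \mathcal{A}^{\mathbb{N}}$ and set $T(x) := \tau(\omega^\ast, x)$ for $x \in I$; then $T|_K$ equals this common $\omega$-independent value.

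Isolating the $k = 1$ term and reindexing the remainder yields the self-similar recursion
$$\tau(\omega, x) = \frac{f_{\omega_1}''(x)}{f_{\omega_1}'(x)} + f_{\omega_1}'(x) \cdot \tau\bigl(\sigma\omega,\, f_{\omega_1}(x)\bigr), \quad x \in I.$$
Specializing to $x \in K$ with $\omega_1 = i$ and using that $f_i(x) \in K$, the right-hand side becomes independent of $\sigma\omega$, yielding the pointwise cohomological identity
$$T(x) = \frac{f_i''(x)}{f_i'(x)} + f_i'(x)\, T(f_i(x)) \quad \text{for every } x \in K \text{ and } i \in \mathcal{A}.$$
Next define $\psi(x) := \int_{x_0}^{x} T(t)\, dt$ and $h(x) := \int_{x_0}^{x} e^{\psi(t)}\, dt$ on $I$; then $h$ is a $C^r$-diffeomorphism onto its image, and we take $\Psi := \{g_i := h \circ f_i \circ h^{-1}\}_{i \in \mathcal{A}}$, whose attractor is $K_\Psi = h(K)$.

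To verify the conclusion, observe that $\log g_i'(h(x)) = \psi(f_i(x)) + \log f_i'(x) - \psi(x)$, so differentiating in $x$ gives
$$\frac{d}{dx} \log g_i'(h(x)) = T(f_i(x))\, f_i'(x) + \frac{f_i''(x)}{f_i'(x)} - T(x),$$
which vanishes on $K$ by the cohomological identity. Since $h'(x) > 0$, this is equivalent to $\frac{d}{dy} \log g_i'(y) = 0$ for every $y \in K_\Psi$, and hence $g_i''(y) = 0$ on $K_\Psi$ as required.

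The principal technical point in executing this plan is verifying $T \in C^{r-2}(I)$, so that $\psi \in C^{r-1}$ and $h \in C^r$. This amounts to uniform summability in $C^{r-2}$ of the series defining $\tau(\omega^\ast, \cdot)$: differentiating the $k$-th term up to $r-2$ times produces, via the Fa\`{a} di Bruno formula, higher derivatives of $f_{\omega^\ast|_{k-1}}$ multiplied by powers of $f_{\omega^\ast|_{k-1}}'(x) = O(\rho^{k-1})$, yielding the required geometric decay in our uniformly contracting $C^r$ setting. Estimates of this flavor are standard in the smooth conjugacy theory of contracting IFSs; compare for instance \cite[Proposition 7.4]{Avila2006yoccoz}.
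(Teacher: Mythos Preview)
Your proof is correct and follows essentially the same strategy as the paper: both extract from the failure hypothesis the fact that the derivative series $\tau(\omega,x)=\lim_n \frac{d}{dx}\log f_{\omega|_n}'(x)$ is independent of $\omega$ on $K$, and both build the conjugacy $h$ as a primitive of $\exp$ of a primitive of this common value. The difference is purely in execution. The paper works with the antiderivatives $\varphi_i(x)=\lim_n\bigl(\log f_{\bar i|_n}'(x)-\log f_{\bar i|_n}'(x_0)\bigr)$ attached to the periodic sequences $\bar i$, obtains the exact cohomological equation $\varphi_i\circ f_i=-\log f_i'+\varphi_i+\text{const}$ on all of $I$, and then introduces the auxiliary functions $d_j=\varphi_1-\varphi_j$ and $F_i$ to transfer this equation to a single $\varphi_1$ for every $f_i$; only the derivative of the correction $F_i$ is shown to vanish on $K$. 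You bypass this detour entirely by working directly at the derivative level: the recursion $\tau(\omega,x)=\frac{f_{\omega_1}''(x)}{f_{\omega_1}'(x)}+f_{\omega_1}'(x)\,\tau(\sigma\omega,f_{\omega_1}(x))$ holds on $I$, and specialising to $x\in K$ immediately gives the needed identity $T(x)=\frac{f_i''(x)}{f_i'(x)}+f_i'(x)T(f_i(x))$ for every $i$, with no auxiliary functions. Your route is a bit more economical; the paper's route has the minor advantage that the integrated cohomological equation is valid on all of $I$ (useful if one wanted finer control off the attractor), but that extra information is not used in the proof of the claim.
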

\begin{proof}
Suppose the condition in Claim \ref{Claim next sufficinet} fails. Then for any $\xi,\zeta \in \mathcal{A}^\mathbb{N}$ and any $x\in K$ we have
\begin{equation} \label{Eq AGY}
\lim_n \frac{d}{dx} \log f_{\xi|_n} ' (x) = \lim_n \frac{d}{dx} \log f_{\zeta|_n} ' (x).
\end{equation}
Now, fix $i\in \mathcal{A}$, and let $\bar{i}\in \mathcal{A}^\mathbb{N}$ be the corresponding $\sigma$-periodic point. Fix $x_0 \in I$.  Define a function $\varphi_i:I\rightarrow \mathbb{R}$ via
$$\varphi_i (x):= \lim_n \log f_{\bar{i}|_n} ' (x) - \log f_{\bar{i}|_n} ' (x_0)$$
It is standard  that $\varphi_i$ is $C^{r-1}$. Now, for every $x\in I$ we have
$$\varphi_i (f_i (x)) = \lim_n \log f_{\bar{i}|_n} ' \circ f_i (x) - \log f_{\bar{i}|_n} ' (x_0) = \lim_n \sum_{j\leq n} \log f_i ' \circ f_{\bar{i}|_{j+1}} (x) - \log f_{\bar{i}|_n} ' (x_0) $$
$$= \varphi_i (x) - \log f_i ' (x) + \log f_i ' (x_{\bar{i}}).$$
Therefore, for any $i\in \mathcal{A}$ and any $x\in I$,
\begin{equation} \label{Eq for any i}
\varphi_i \circ f_i (x) = - \log f_i ' (x) + \varphi_i(x)+ \log f_i ' (x_{\bar{i}}).
\end{equation}

Note that we can produce such a function $\varphi_j$ for every $j \in \mathcal{A}$. So, for every $j\in \mathcal{A}$ we define a function $d_j:I\rightarrow \mathbb{R}$ via
$$d_j(x) = \varphi_1(x)-\varphi_j(x).$$
By \eqref{Eq AGY} for every $x\in K$ we have that $\varphi_1' (x) = \varphi_j' (x)$ so $d_j'(x)=0$ for all $x\in K$.  Also, using \eqref{Eq for any i}, for any $x\in I$ and $i\in \mathcal{A}$,
$$ \varphi_1 \circ f_i (x) = \varphi_i \circ f_i (x) +d_i\circ f_i (x) = - \log f_i ' (x)+\varphi_i (x) +d_i \circ f_i (x)+ \log f_i ' (x_{\bar{i}})$$
$$ = - \log f_i ' (x)+\varphi_1 (x)-d_i (x) +d_i \circ f_i (x)+ \log f_i ' (x_{\bar{i}}).$$
To conclude, for every $i\in \mathcal{A}$ the function $F_i:I\rightarrow \mathbb{R}$ defined by
$$F_i (x):= d_i \circ f_i (x) -d_i (x)+ \log f_i ' (x_{\bar{i}})$$
satisfies  that 
\begin{equation} \label{Eq useful}
\varphi_1 \circ f_i (x) = - \log f_i ' (x)+\varphi_1 (x) +F_i (x) \text{ for every } x\in I, \text{ and } F_i ' (x) =0 \text{ for all } x\in K. 
\end{equation}

Finally, let $h:I\rightarrow \mathbb{R}$ be a $C^{r}$ smooth function  that is a primitive of $\exp( \varphi_1 (x))$ on $I$. For every $i\in \mathcal{A}$ define a function $g_i: h(I)\rightarrow h(I)$ via
$$g_i (x) := h \circ f_i \circ h^{-1} : h(I) \rightarrow h(I)$$
and let $\Psi$ be the IFS consisting of the maps $g_i$. Then $\Psi$ is $C^r$ conjugate to $\Phi$.

We claim that $\Psi$ is a linear IFS. Indeed,  by \eqref{Eq useful}, for every $i \in \mathcal{A}$ and every $y\in h(I)$
\begin{eqnarray*}
g_i' (y)  &=& \left( h \circ f_i \circ h^{-1} \right)'(y) \\
&=& \frac{h' \left( f_i \circ h^{-1} (y) \right) \cdot  f_i' (h^{-1}(y)) }{h'(h^{-1}(y))}\\
&=& \exp\left( \varphi_1 \circ f_i \circ h^{-1}(y)  +\log \left( f_i ' \circ h^{-1}(y) \right)  -\varphi_1 \circ h^{-1}(y) \right)  \\
&=& \exp\left( F_i \circ h^{-1} (y) \right).  \\
\end{eqnarray*}
Therefore, for every $y\in h(K)$ we have
$$g_i'' (y) = F_i ' (h^{-1} (y)) \cdot \left( h^{-1} \right)' (y)  \cdot \exp\left( F_i \circ h^{-1} (y) \right)  = 0$$
as $F_i '$ vanishes on $K$ by \eqref{Eq useful}. Since $h(K)$ is the attractor of $\Psi$, the proof is complete.
\end{proof}

\noindent{\textbf{Proof of Theorem \ref{Thm Dolgopyat}}} We  show that in the not-conjugate-to-linear setting  there is some $\alpha>0$ that  satisfies the conditions of Lemma \ref{Lemma 3}. Thus, via the conclusion of Lemma \ref{Lemma 3}, Theorem \ref{Thm Dolgopyat} will follow. Indeed, if this is not the case then  by Lemma \ref{Lemma 4} $\Phi$ has the AAE property. However, since $\Phi$ assumed not to be conjugate to linear, by Claim \ref{Claim final ing} the condition in Claim \ref{Claim next sufficinet} holds true. This in turn implies that the  the condition of Lemma \ref{Lemma sufficent holds} holds true. But by Lemma \ref{Lemma sufficent holds} $\Phi$ cannot have the AAE property. This is a contradiction. The Theorem is proved. $\hfill{\Box}$

\section{An Effective central  limit Theorem for  the derivative cocycle} \label{Section LLT and CLT}
Let $\mathbb{P}=\mathbf{p}^\mathbb{N}$ be a Bernoulli measure on $\mathcal{A}^\mathbb{N}$, and keep the notations and assumptions as in Section \ref{Section transfer}. In this Section we discuss an effective version of the central limit Theorem for a certain random walk driven by the derivative cocycle \eqref{The der cocycle}. This random walk is defined as follows: Denoting by $\sigma: \mathcal{A}^\mathbb{N} \rightarrow \mathcal{A}^\mathbb{N}$ the left shift, for every $n\in \mathbb{N}$ we define a function  on $\mathcal{A}^\mathbb{N}$ via
\begin{equation} \label{Eq for Sn}
S_n(\omega)=-\log f'_{\omega|_n}(x_{\sigma^n(\omega)}).
\end{equation}
Let $X_1: \mathcal{A}^\mathbb{N}\rightarrow \mathbb{R}$ be the random variable
\begin{equation} \label{Eq for X1}
X_1(\omega):= c(\omega_1,\sigma(\omega))=-\log f'_{\omega_1}(x_{\sigma(\omega)}) ,
\end{equation}
and note that our assumptions on $\Phi$ imply that $X_1 \in H^1$. Next, for every integer $n>1$ we define 
$$X_n(\omega) =  - \log f_{\omega_n} ' \left( x_{\sigma^n (x_\omega)} \right)  = X_1 \circ \sigma^{n-1}.$$
Let $\kappa$ be the law of the random variable $X_1$. Then for every $n$, $X_n \sim \kappa$.  By uniform contraction  there exists $D,D'\in \mathbb{R}$ as in \eqref{Eq C and C prime},  
so $\kappa \in \mathcal{P}([D,D'])$. In particular, the support of $\kappa$ is bounded away from $0$. It is easy to see that for every $n \in \mathbb{N}$ and $\omega\in A^\mathbb{N}$ we have
$$S_n(\omega) = \sum_{i=1} ^n X_i (\omega).$$
Thus, in this sense $S_n$ is a random walk.

We proceed to state a version of the central limit Theorem for the random walk $S_n$: For $r> 0$ let $N(0,r ^2)$ be the distribution of a Gaussian random variable with  $0$ mean and variance $r^2$. Also, for any Bernoulli measure $\mathbb{P}$ on $\mathcal{A}^\mathbb{N}$ recall that we write $\chi=\chi_\mathbf{p} = \int c(a,\omega) d\mathbf{p}(a) d\mathbb{P} (\omega)$. 
The  Berry-Esseen type central limit Theorem we now state  follows from a standard application of the Nagaev-Guivarc'h method as presented in the work of Gou\"{e}zel \cite{Gouzel2015limit}:
\begin{theorem} \cite[Theorem 3.7]{Gouzel2015limit} \label{Theorem CLT kasun} Suppose
$\Phi$ satisfies the conditions of Theorem \ref{Main Theorem} and is either Diophantine or not-conjugate-to-linear. Let $\mathbb{P}=\mathbf{p}^\mathbb{N}$ be a Bernoulli measure on $\mathcal{A}^\mathbb{N}$. Then there exists some $r_0=r_0(\mathbf{p})>0$ such that
$$\sup_{z} \left| \mathbb{P}\left( \frac{S_n-n\chi}{\sqrt{n}} \leq z \right) - \left( N(0, r_0 ^2) \leq z \right) \right| = O\left( \frac{1}{n^{\frac{1}{2}}}\right)$$
where $\left( N(0, r_0 ^2) \leq z \right)$ stands for the probability that $N(0,r_0^2)$ is less than $z$.
\end{theorem}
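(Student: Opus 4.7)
The plan is to derive Theorem \ref{Theorem CLT kasun} as a direct consequence of the Nagaev--Guivarc'h framework encoded in \cite[Theorem 3.7]{Gouzel2015limit}, which packages a Berry--Esseen bound out of purely spectral information about the perturbed transfer operators $P_{i\theta}$. More precisely, \cite[Theorem 3.7]{Gouzel2015limit} asserts that if $P_{i\theta}$ is an analytic family of bounded operators on a Banach space, if $P_{i0}$ has a simple leading eigenvalue $1$ isolated from the rest of the spectrum, if for each compact subset of $\mathbb{R}\setminus\{0\}$ some iterate of $P_{i\theta}$ has spectral radius strictly less than $1$ uniformly in $\theta$, and if the Taylor expansion $\lambda_{i\theta}=1+i\chi\theta-\tfrac{r_0^2}{2}\theta^2+O(\theta^3)$ holds near $0$ with $r_0>0$, then $S_n$ satisfies the Berry--Esseen estimate claimed above. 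My task therefore reduces to verifying each of these inputs in the present setting.

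First I would cite Theorem \ref{Theorem B-Q 1}(1) for analyticity of $\theta\mapsto P_{i\theta}$ on $H^1$, and Theorem \ref{Theorem B-Q 1}(2) for the fact that $1$ is a simple isolated leading eigenvalue of $P_{i0}$ with the rest of the spectrum (essential and otherwise) contained in a disc of radius strictly less than $1$. This immediately yields the standard spectral decomposition $P_{i0}=N+Q$ with $\|Q^n\|\leq C\lambda^n$ for some $\lambda<1$, and by analytic perturbation theory it persists for small $|\theta|$ in the form $P_{i\theta}=\lambda_{i\theta}N_{i\theta}+Q_{i\theta}$ with the same uniform geometric decay of $Q_{i\theta}^n$. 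The uniform contraction on compact subsets of $\mathbb{R}_+$ bounded away from $0$ is precisely Proposition \ref{Prop compact and small guys}(1), and the quantitative decay for small $|\theta|$ of Proposition \ref{Prop compact and small guys}(2) furnishes the required control for $|\theta|$ in a neighborhood of zero. Up to the usual re-centering \eqref{Eq rece cocycle}, these are exactly the hypotheses of \cite[Theorem 3.7]{Gouzel2015limit}.

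The one genuinely delicate point is the strict positivity of the variance $r_0^2$ of the limiting Gaussian, since the Berry--Esseen rate degenerates when $r_0=0$. This is already addressed in the discussion following Proposition \ref{Prop compact and small guys}: if $r_0=0$ then by a standard coboundary argument the cocycle $c'(i,x)=-\log f_i'(x)$ on $\mathcal{A}\times I$ is $C^1$ cohomologous to a constant, which either contradicts the Diophantine hypothesis \eqref{Eq new Dio condition} or, in the not-conjugate-to-linear case, leads via the argument embedded in Claim \ref{Claim final ing} to a $C^r$ conjugation of $\Phi$ to a linear IFS. Under either of our standing alternatives this is excluded, so $r_0>0$, and we may apply the Taylor--Young expansion of $\lambda_{i\theta}$ obtained by combining \cite[Lemmas 11.18, 11.19]{Benoist2016Quint} with the argument in the third paragraph of the proof of \cite[Theorem 3.7]{Gouzel2015limit}.

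The main obstacle, as usual in this framework, is not any of the individual verifications but rather the non-degeneracy $r_0>0$, since it forces one to cross from spectral information about $P_{i\theta}$ back to geometric information about $\Phi$ via the cohomological equation. Once this is in place, the remaining Fourier-analytic integration on the dual side, yielding the $O(n^{-1/2})$ rate by a Berry--Esseen smoothing inequality applied to $\mathbb{E}[e^{i\theta(S_n-n\chi)/\sqrt{n}}]=\mathbf{1}\cdot P_{i\theta/\sqrt{n}}^n(\mathbf{1})\,e^{-i\sqrt{n}\chi\theta}$, is entirely routine within Gouëzel's framework and I would simply cite it.
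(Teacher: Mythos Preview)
Your proposal is correct and follows essentially the same approach as the paper: both verify the hypotheses of \cite[Theorem 3.7]{Gouzel2015limit} by citing Theorem \ref{Theorem B-Q 1} for analyticity and the spectral gap, Proposition \ref{Prop compact and small guys} for the behavior of $P_{i\theta}$ away from and near zero, and the discussion following that Proposition for the positivity of $r_0$. The paper's own argument is the brief paragraph after the statement, and your write-up simply unpacks it in slightly more detail (including the coding identity $\mathbb{E}\big(e^{2\pi i\theta S_n}\big)=\mathbb{E}\big(P_{i\theta}^n(\mathbf{1})\big)$, which the paper also records).
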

To explain how the setup of Theorem \ref{Theorem CLT kasun} fits into  the conditions of  \cite[Theorem 3.7]{Gouzel2015limit}, we note that since $c$ is a cocycle,  for every $\theta$ the constant function  $\mathbf{1} \in H^1$ satisfies
$$\mathbb{E}\left( e^{2 \pi i \theta S_n} \right) = \mathbb{E} \left( P_{i \theta} ^n \left( \mathbf{1} \right) \right).$$
This confirms  the coding assumption in \cite[Theorem 2.4]{Gouzel2015limit}. The other assumptions of \cite[Theorem 2.4]{Gouzel2015limit} and \cite[Theorem 3.7]{Gouzel2015limit}   follow directly from Theorem \ref{Theorem B-Q 1} and Proposition \ref{Prop compact and small guys} part (2) (where it is explained why here $r_0 >0$).

Finally, we remark that  the very recent works of Fernando-Liverani \cite{Kasun2021Liv} and Cuny-Dedecker-Merlev\`ede \cite{Cuny2021Ded} are closely related to this.   We refer the reader to  \cite{Kasun2021Liv, Kasun020Pene} for an exhaustive bibliography of some further related results.

\section{An effective local limit Theorem with moderate deviations} \label{Section LLT}
Let $\mathbb{P}=\mathbf{p}^\mathbb{N}$ be a Bernoulli measure on $\mathcal{A}^\mathbb{N}$, and keep the notations and assumptions as in Section \ref{Section transfer}. For every $n\in \mathbb{N}$ and $\omega \in \mathcal{A}^\mathbb{N}$  consider the distribution of the centred $n$-step random walk driven by $c$ that starts from $\omega$. This distribution is given by a measure $\mu_{n,\omega}$ on $\mathbb{R}$ such that, for $X\subseteq \mathbb{R}$
$$\mu_{n,\omega} (X) = \int 1_X ( c(a,\omega) - n \chi) d\mathbf{p}^n (a)$$
where, as in Section \ref{Section LLT and CLT}, $\chi$ is the Lyapunov exponent.  Let $G_n$ be the density of the $n$-fold convolution of the Gaussian $N^{*n}(0,r_0 ^2)$ with $r_0$ as in Theorem \ref{Theorem CLT kasun}. That is,
$$G_n(v) = \frac{e^{- \frac{v^2 \cdot r_0 ^2}{2n} }}{\sqrt{2 \pi n}}, \text{ for } v\in \mathbb{R}.$$
The following local limit Theorem is one of the main keys behind the proof of Theorem \ref{Main Theorem}. It is an effective version of a local limit Theorem with moderate deviations due to Benoist-Quint \cite[Theorem 16.1]{Benoist2016Quint}. Recall that $\lambda$ denotes the Lebesgue measure on $\mathbb{R}$.
\begin{theorem} \label{Theorem LLT}
Suppose $\Phi$ satisfies the conditions of Theorem \ref{Main Theorem} and is either Diophantine or not-conjugate-to-linear. Then for every $R>0$ there  is some $\delta=\delta(\mathbf{p},R)>0$ such that for every bounded interval $C\subseteq \mathbb{R}$ 
$$\sup \left\lbrace \left| \frac{\mu_{n,\omega} (C+v_n)}{G_n(v_n)} - \lambda(C) \right|: \, \omega \in \mathcal{A}^\mathbb{N}, \, |v_n|\leq \sqrt{R n \log n} \right\rbrace = O_{\lambda(C)} (\frac{1}{n^\delta}), \text{ as } n\rightarrow \infty.$$
\end{theorem}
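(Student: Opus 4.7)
The plan is to follow a Nagaev--Guivarc'h Fourier inversion scheme combined with a Cram\'er--Esscher tilt to handle the moderate deviations regime, then control the Fourier integral by splitting the $\theta$-axis into three zones corresponding to the three regimes of decay for $P_{i\theta}$ established above. First I would replace $1_C$ by a smooth bump $\psi_\epsilon$ at scale $\epsilon = n^{-\delta_0}$ for small $\delta_0 > 0$; since $\widehat{\mu_{n,\omega}}(\theta) = e^{-2\pi i n\chi\theta} (P_{i\theta}^{n}\mathbf{1})(\omega)$, Fourier inversion reduces matters to estimating
$$\int \widehat{\psi_\epsilon}(\theta)\, e^{-2\pi i\theta v_n} (P_{i\theta}^{n}\mathbf{1})(\omega)\, d\theta,$$
and the Schwartz decay of $\widehat{\psi_\epsilon}$ makes the tail $|\theta| \geq n^{\delta_1}$ negligible for an appropriate $\delta_1$.

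To handle $|v_n| \leq \sqrt{Rn \log n}$, I would perform a Cram\'er tilt by choosing a small real $t_n$ with $\partial_t \log \lambda_t|_{t_n} = \chi + v_n/n$. Analyticity of $P_{s+i\theta}$ in a complex neighbourhood of $0$ (Theorem \ref{Theorem B-Q 1} part (1)) combined with the spectral gap (part (2)) gives, via standard perturbation theory, an analytic continuation of the leading eigenvalue, and since $r_0 > 0$ (Proposition \ref{Prop compact and small guys}) the equation for $t_n$ is solvable with $t_n = O(\sqrt{\log n/n})$. After the tilt the centred walk has mean $v_n$ and the prefactor $\lambda_{t_n}^n e^{-t_n v_n}$ matches $G_n(v_n)$ up to the Legendre transform of $\log\lambda_t$, which is $-v_n^2 r_0^2/(2n) + O(|v_n|^3/n^2) = o(1)$ in the moderate deviations range, so this Gaussian prefactor reconstructs $G_n(v_n)$ with a polynomial error.

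I would then split $\int_{|\theta|\leq n^{\delta_1}}$ into three zones and estimate each against the tilted transfer operator $P_{t_n + i\theta}$:
\begin{itemize}
\item \emph{Central Gaussian zone} $|\theta| \leq n^{-\delta_2}$: Proposition \ref{Prop compact and small guys} part (2) applied to the tilted operator, combined with the second-order Taylor expansion of $\lambda_{t_n+i\theta}$ around $\theta = 0$, yields a quantitative Gaussian approximation that produces the leading term $G_n(v_n) \lambda(C)$ with error $O(n^{-\delta})$.
\item \emph{Intermediate compact zone} $n^{-\delta_2} \leq |\theta| \leq T$: Proposition \ref{Prop compact and small guys} part (1) gives $\|P_{t_n + i\theta}^n\mathbf{1}\|_1 \leq C_1 (C')^{n/n_0}$, an exponentially small contribution.
\item \emph{Large $\theta$ zone} $T \leq |\theta| \leq n^{\delta_1}$: Iterating Theorem \ref{Thm Dolgopyat} (whose analogue for $P_{t_n+i\theta}$ follows from a perturbative argument since $t_n \to 0$) yields
$$\|P_{t_n+i\theta}^{n}\mathbf{1}\|_{(\theta)} \leq \Bigl(1-\tfrac{C}{|\theta|^\alpha}\Bigr)^{\lfloor n/n(\beta,\theta)\rfloor} \leq n^{-A}$$
for any fixed $A$, provided $\delta_1 < 1/\alpha$; this is where the Diophantine or not-conjugate-to-linear hypothesis enters in an essential way.
\end{itemize}
Summing the three contributions, then passing from the smooth $\psi_\epsilon$ back to $1_C$ by the usual sandwich, produces the claimed bound $O_{\lambda(C)}(n^{-\delta})$, uniform in $\omega$ and in $v_n$ in the moderate deviations range.

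The main obstacle I expect is the large $\theta$ zone (c): one must transfer the Dolgopyat-type bound of Theorem \ref{Thm Dolgopyat} from $P_{i\theta}$ to the tilted family $P_{t_n + i\theta}$ uniformly in $t_n$ of order $\sqrt{\log n/n}$, and one must iterate $(1 - C|\theta|^{-\alpha})$ in the norm $\|\cdot\|_{(\theta)}$ while comparing it with the usual $|\cdot|_1$ norm that controls $(P^n \mathbf{1})(\omega)$ pointwise. A secondary, more technical difficulty is showing that the analytic perturbation of $\lambda_{t+i\theta}$ near $t = 0$ extends in a controlled way to $t = t_n$, with remainder terms of the Taylor expansion tracked as explicit powers of $|v_n|/\sqrt n$; this is where Breuillard's effective scheme from \cite{Breuillard2005llt} together with the Benoist--Quint framework from \cite[Chapter 16]{Benoist2016Quint} is needed to combine cleanly.
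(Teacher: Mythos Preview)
Your proposal takes a genuinely different route from the paper's. The paper \emph{does not} perform a Cram\'er--Esscher tilt. Instead, following Benoist--Quint, it handles the moderate deviations regime $|v_n|\leq\sqrt{Rn\log n}$ by pushing the Edgeworth-type expansion of $\lambda_{i\theta}^n N_{i\theta}$ to a high order $r>3+R$ (this is \cite[Lemma 16.12]{Benoist2016Quint}, quoted as Lemma \ref{Lemma 16.12}). The point is that $G_n(v_n)^{-1}\leq O(n^{(1+R)/2})$, so an \emph{additive} error of order $n^{-r/2}$ in the smooth local limit theorem becomes negligible relative to $G_n(v_n)$ once $r$ is large enough; no recentring is needed. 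The Fourier integral is then split only into two zones, $|\theta|^2\lessgtr T\log n/n$: the small-$\theta$ zone is handled by the spectral decomposition $P_{i\theta}^n=\lambda_{i\theta}^n N_{i\theta}+O(\delta^n)$ plus the order-$r$ expansion, and the large-$\theta$ zone is handled by the Breuillard-type estimate (Theorem \ref{Lemma key 2}), which packages together Proposition \ref{Prop compact and small guys} and the iterated Dolgopyat bound (Corollary \ref{Coro Dima}) into a single $o(n^{-r})$ bound. The passage from the smooth $\psi_{\epsilon,C}$ to $1_C$ is then done by the sandwich argument with $\epsilon=n^{-1/k}$, exactly as you describe.

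The practical upshot is that the paper's scheme applies Theorem \ref{Thm Dolgopyat} \emph{only} to the operator $P_{i\theta}$ with purely imaginary parameter, exactly as stated and proved. Your tilt, by contrast, forces you to transfer the Dolgopyat contraction to $P_{t_n+i\theta}$ uniformly in $|\theta|\leq n^{\delta_1}$, which is precisely the obstacle you flag at the end; the gain $1-C|\theta|^{-\alpha}$ is only polynomially close to $1$, so a perturbation of size $t_n\asymp\sqrt{\log n/n}$ is not obviously harmless for $|\theta|$ near $n^{\delta_1}$. This can likely be made to work with enough care (restricting $\delta_1$ further), but the paper sidesteps the issue entirely at the price of invoking the higher-order expansion machinery from \cite[Chapter 16]{Benoist2016Quint}. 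So: your approach is plausible but carries an extra technical burden that the paper's route simply avoids.
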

Here by $O_{\lambda(C)} (\frac{1}{n^\delta})$ we mean that the multiplicative constant inside the big-$O$ depends on $\lambda(C)$, but we do note that it also depends on other universal multiplicative factors and on $\mathbf{p}$. We do not attempt to give more specific quantitative estimates of the rate, although this is possible.  This result may be extended to other cocycles taking values in vector spaces over $\mathbb{R}$ subject to certain contraction and moment conditions, along with conditions ensuring that the transfer operator contracts fast enough for large frequencies (as in Theorem \ref{Thm Dolgopyat}). Also, similarly to e.g. \cite[Proposition 16.6]{Benoist2016Quint}  Theorem \ref{Theorem LLT} may be adapted to work with a target. However, having the Fourier decay result Theorem \ref{Main Theorem} in our sights, we do not  study these more general situations here.

 The scheme of proof of Theorem \ref{Theorem LLT} is modelled after the proof of Benoist-Quint's local limit Theorem \cite[Theorem 16.1]{Benoist2016Quint}, which is essentially the same as Theorem \ref{Theorem LLT} but without an explicit rate of decay. The proof of Benoist-Quint roughly follows three main steps: First, they prove a version with the interval  $C$ replaced by certain smooth functions on $\mathbb{R}$ \cite[Lemma 16.11]{Benoist2016Quint}. Secondly, they prove that the indicator function $1_C$ admits "good" approximations via such smooth functions \cite[Lemma 16.13]{Benoist2016Quint}. The third and final step is an estimation of $\frac{\mu_{n,\omega} (C+v_n)}{G_n(v_n)}$ for moderately large $v_n\in \mathbb{R}$ using the previous two steps.

 Thus, we will show that the conditions of Theorem \ref{Theorem LLT} yield an effective version of \cite[Lemma 16.11]{Benoist2016Quint}, the local limit Theorem for smooth functions.  Section \ref{Section LLT for smooth}, that contains this result, critically relies on  Theorem \ref{Thm Dolgopyat} to derive certain estimates on an integral that arises from Fourier inversion. This  is inspired by the work of Breuillard \cite[Lemme 3.1]{Breuillard2005llt}, and is related to the analysis of Fernando-Liverani \cite[Theorem 2.4]{Kasun2021Liv}. Then, in Section \ref{Section LLT approx}, we show that the proof of \cite[Lemma 16.13]{Benoist2016Quint} actually yields a polynomial error term. We then combine these into a proof of Theorem \ref{Theorem LLT} in Section \ref{Section proof of LLT}, following along the lines of \cite[Eq. (16.21) and (16.23)]{Benoist2016Quint}.

From this point forward, we  use the standard re-centring  trick as in \eqref{Eq rece cocycle}  and assume $\chi =0$. This will make our computation a bit simpler. Notice that this amounts to changing the cocycle $c$ to a re-centred version $c-n\chi$, which is precisely how the distributions $\mu_{n,\omega}$ 
are defined. From now on, this will be our cocycle.

\subsection{Effective local limit Theorem for smooth functions} \label{Section LLT for smooth}
We proceed to prove a version of our effective local limit Theorem for certain smooth functions. This is in accordance with the strategy of Benoist-Quint \cite[Section 16.2]{Benoist2016Quint}, but via Theorem \ref{Thm Dolgopyat} and ideas going back to Breuillard \cite{Breuillard2005llt} and Stone \cite{Stone1965llt} we make this Theorem  effective.

Fix a non-negative Schwartz function $\alpha$ on $\mathbb{R}$ such that $\lambda(\alpha)=1$, $||\alpha||_\infty \leq 1$, and $\hat{\alpha}$ has compact support, as in  \cite[Definition 16.8 and Remark 16.9]{Benoist2016Quint}. For every $\epsilon>0$ we define
$$\alpha_\epsilon (v) : = \frac{1}{\epsilon} \alpha(\frac{v}{\epsilon}).$$
Fix a bounded interval $C$ and define
$$ \psi_{\epsilon,C}(v):= \int \alpha_\epsilon (w)1_C (v-w)d \lambda(w) = (\alpha_\epsilon \lambda)*1_C$$
which is still a non-negative Schwartz function \cite[Page 268]{Benoist2016Quint}.  For $f\in C^k(\mathbb{R})$ let
$$C^k(f) = \max_{0\leq j \leq k} \parallel f^{(j)} \parallel_{L^1}, \text{ and (even for more general functions) } \hat{f}(\theta) = \int e^{-i\theta x} f(x)dx$$ 
and note that  for every integer $k\geq 1$ 
\begin{equation} \label{Bound for Fourier}
||\widehat{\psi_{\epsilon,C}} ||_\infty \leq \lambda(C), \quad ||\psi_{\epsilon,C}||_\infty \leq \frac{\lambda(C)}{\epsilon}, \quad C^k ( \psi_{\epsilon,C}  ) \leq \lambda(C)\cdot  C^k ( \alpha_{\epsilon}  ) \leq O_\alpha \left( \frac{\lambda(C)}{\epsilon^{k}} \right).
\end{equation}
Recall the notations $\mu_{n,\omega}, G_n$ introduced before Theorem \ref{Theorem LLT}. The following is an effective version of \cite[Lemma 16.11]{Benoist2016Quint}:
\begin{theorem}  \label{Theorem smooth}
Let $\Phi$ be as in Theorem \ref{Theorem LLT} and let $\mathbb{P} = \mathbf{p}^\mathbb{N}$ be a Bernoulli measure. Let $\ell = \alpha+1$, where $\alpha$ is as in Theorem \ref{Thm Dolgopyat}. 

Then for every $r\geq 2$ there exists $\delta = \delta(r)>0$ such that, setting $k= \lceil\ell\cdot r+2\rceil$, for every $\epsilon>0$ we have
$$\sup_{\omega \in \mathcal{A}^\mathbb{N}} \left| \mu_{n,\omega} (\psi_{\epsilon,C}) - \lambda( \psi_{\epsilon,C} \cdot G_n) \right| \leq O_{\lambda(C)} \left(\frac{1}{n^\delta} \right) \cdot \lambda( \psi_{\epsilon,C} \cdot G_n) + \frac{1}{\epsilon^k} \cdot  O_{\lambda(C)} \left(\frac{1}{n^{\frac{r}{2}}} \right)$$

\end{theorem}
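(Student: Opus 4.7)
The plan is Fourier inversion. Since $c$ is a cocycle, $\int e^{2\pi i\theta c(a,\omega)}\,d\mathbf{p}^n(a) = (P_{i\theta}^n\mathbf{1})(\omega)$, and writing $\psi_{\epsilon,C}$ via its Fourier transform yields
\begin{equation*}
\mu_{n,\omega}(\psi_{\epsilon,C}) - \lambda(\psi_{\epsilon,C}\cdot G_n) = c_F \int \widehat{\psi_{\epsilon,C}}(\theta)\Bigl[(P_{i\theta}^n\mathbf{1})(\omega) - \widehat{G_n}(\theta)\Bigr]\,d\theta
\end{equation*}
after a harmless rescaling reconciling the two Fourier conventions used in the paper (with $c_F$ an absolute constant), where $\widehat{G_n}(\theta)= e^{-c_0 n\theta^2}$ for some $c_0=c_0(r_0)>0$. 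The task is thus to bound this integral, which I would do by splitting the $\theta$-axis into four standard regions.

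On the small range $|\theta|\le\epsilon_0$, the analytic perturbation theory from \cite[Chapter 11]{Benoist2016Quint} together with Proposition \ref{Prop compact and small guys}(2) gives a spectral decomposition $(P_{i\theta}^n\mathbf{1})(\omega) = \lambda_{i\theta}^n (N_{i\theta}\mathbf{1})(\omega) + O(\rho_0^n)$ for some $\rho_0<1$, with leading eigenvalue satisfying $\log\lambda_{i\theta} = -c_0\theta^2 + O(|\theta|^3)$. Thus $\lambda_{i\theta}^n$ matches $\widehat{G_n}(\theta)$ up to a factor $1+O(n|\theta|^3)$ uniformly on $|\theta|\le\epsilon_0$, and integrating this against $|\widehat{\psi_{\epsilon,C}}|\le\lambda(C)$, followed by the change of variables $\theta\mapsto\theta/\sqrt n$, produces the multiplicative term $O_{\lambda(C)}(n^{-\delta})\cdot \lambda(\psi_{\epsilon,C}\cdot G_n)$. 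On the intermediate range $\epsilon_0\le|\theta|\le M$, Proposition \ref{Prop compact and small guys}(1) gives $\|P_{i\theta}^{n_0}\|_1 < 1$ uniformly, so $\|P_{i\theta}^n\mathbf{1}\|_\infty$ decays exponentially in $n$ and the contribution is negligible.

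The critical range is $M\le|\theta|\le T_n$ for a cutoff $T_n$ to be chosen. Theorem \ref{Thm Dolgopyat} yields $\|P_{i\theta}^{n(\beta,\theta)}\|_{(\theta)} \le 1 - C|\theta|^{-\alpha}$ with $n(\beta,\theta) = \lfloor\beta\log|\theta|\rfloor$, so iterating to $n$ steps and using $\|\mathbf{1}\|_{(\theta)}=1$ gives
\begin{equation*}
\|P_{i\theta}^n \mathbf{1}\|_\infty \le \|P_{i\theta}^n \mathbf{1}\|_{(\theta)} \lesssim \exp\!\bigl(-c\,n\,|\theta|^{-\alpha}/\log|\theta|\bigr).
\end{equation*}
Multiplying by $|\widehat{\psi_{\epsilon,C}}|\le\lambda(C)$ and integrating in $\theta$ produces $O_{\lambda(C)}(n^{-r/2})$ provided $T_n = n^\gamma$ with $\gamma<1/\alpha$; the choice $\gamma = 1/\ell = 1/(\alpha+1)$ works and is what forces the appearance of $\ell$ in the statement. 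For the tail $|\theta|>T_n$, integration by parts $k$ times together with \eqref{Bound for Fourier} yields $|\widehat{\psi_{\epsilon,C}}(\theta)| \lesssim \lambda(C)\,\epsilon^{-k}\,|\theta|^{-k}$, and using the trivial bound $\|P_{i\theta}^n\mathbf{1}\|_\infty\le 1$ the tail contributes $\lesssim \lambda(C)\,\epsilon^{-k}\,T_n^{-(k-1)}$. This is $\le\epsilon^{-k}\,O_{\lambda(C)}(n^{-r/2})$ once $k$ is of order $\ell r + O(1)$; the slightly larger choice $k=\lceil\ell r+2\rceil$ in the statement provides ample margin to absorb the logarithmic factors and the constants from the large-frequency integral.

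The main obstacle is precisely this balance in the large-frequency regime: Theorem \ref{Thm Dolgopyat} provides only polynomial contraction $\sim|\theta|^{-\alpha}$ per $\log|\theta|$ steps, so one must push $T_n$ high enough that the smoothness-tail contribution is only $\epsilon^{-k}n^{-r/2}$, while keeping $T_n$ low enough that the decay coming from iterating Theorem \ref{Thm Dolgopyat} actually beats $n^{-r/2}$ throughout $[M,T_n]$. Optimizing this trade-off, exactly as carried out in \cite[Lemme 3.1]{Breuillard2005llt}, fixes $T_n = n^{1/\ell}$ and thereby determines the exponent $k$ in terms of $\ell r$. The small-$|\theta|$ region then produces the multiplicative error, the large-frequency range and the smooth tail together yield the additive $\epsilon^{-k}n^{-r/2}$ error, and the intermediate range is swallowed by either.
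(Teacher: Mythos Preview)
Your Fourier-inversion setup is correct and broadly matches the paper. There are, however, two genuine gaps.

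The more serious one is in the small-frequency region: your argument does not produce the \emph{multiplicative} error $O_{\lambda(C)}(n^{-\delta})\cdot\lambda(\psi_{\epsilon,C}\cdot G_n)$. The claim that $\lambda_{i\theta}^n=\widehat{G_n}(\theta)(1+O(n|\theta|^3))$ uniformly on a fixed interval $|\theta|\le\epsilon_0$ is false once $n|\theta|^3$ is unbounded; and even on the shrinking range where it holds, integrating $|\widehat{\psi_{\epsilon,C}}|\cdot\widehat{G_n}\cdot O(n|\theta|^3)$ yields only an \emph{additive} error of order $\lambda(C)\,n^{-1}$, since $\int|\widehat\psi|\,\widehat{G_n}\,d\theta$ is of order $\lambda(C)\,n^{-1/2}$ regardless of where $C$ sits, whereas $\lambda(\psi_{\epsilon,C}\cdot G_n)$ can be as small as $\lambda(C)\,n^{-(1+R)/2}$ in the moderate-deviations regime where the theorem is actually applied (cf.\ Lemma \ref{Lemma upper bound}). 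The paper obtains the multiplicative structure by a different mechanism: it uses a \emph{shrinking} cutoff $|\theta|^2\le T\log n/n$, invokes the full Edgeworth-type expansion of Benoist--Quint (Lemma \ref{Lemma 16.12}) there, and then Fourier-inverts the expansion \emph{back to physical space} (equation \eqref{Eq (16.16)}), producing correction terms $\int\psi_{\epsilon,C}(v)G_n(v)B_i(v/\sqrt n)\,dv$ with $\deg B_i\le 3i$. On $|v|^2\le Tn\log n$ these are then bounded, term by term, by $(\log n)^{3i/2}n^{-i/2}\cdot\lambda(\psi_{\epsilon,C}\cdot G_n)$; this second inversion is precisely what creates the multiplicative form.

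The second gap is in your critical region $M\le|\theta|\le T_n$. The Dolgopyat bound is $\|P_{i\theta}^n\mathbf{1}\|_\infty\le e^{-cn/|\theta|^{\alpha+1}}$ (Corollary \ref{Coro Dima}, with exponent $\alpha+1=\ell$, not $\alpha/\log|\theta|$), and with your choice $T_n=n^{1/\ell}$ this is only $e^{-c}$ at the upper endpoint; integrating it against the flat bound $|\widehat\psi|\le\lambda(C)$ over $[M,T_n]$ therefore gives something of order $\lambda(C)\,n^{1/\ell}$, which grows. What is actually needed---and what the paper's Theorem \ref{Lemma key 2}, following Breuillard, does---is to use the smoothness bound $|\widehat\psi(\theta)|\lesssim C^k(\psi)\,|\theta|^{-k}$ \emph{together} with the Dolgopyat decay throughout the large-frequency region, so that $\int|\theta|^{-k}e^{-cn/|\theta|^\ell}\,d\theta\sim n^{-(k-1)/\ell}$ and the condition $k>\ell r+1$ makes this $o(n^{-r})$. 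Your separation into ``critical'' (Dolgopyat alone) versus ``tail'' (smoothness alone) does not work.
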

Notice that in Theorem \ref{Theorem smooth} the dependence on  $\epsilon$ is explicit in the second error term - this will be important later on. Also, here  the sequence $\epsilon_n$ as in \cite[Lemma 16.11]{Benoist2016Quint} is the polynomially decaying sequence  $O_{\lambda(C)} \left(\frac{1}{n^{\delta}} \right)$. We can, in fact, indicate a more precise rate - see \eqref{Eq specific rate} below.

To prove Theorem \ref{Theorem smooth} we utilize Theorem \ref{Thm Dolgopyat} and Proposition \ref{Prop compact and small guys} to establish Theorem \ref{Lemma key 2}, a Breuillard \cite[Lemme 3.1]{Breuillard2005llt} type estimate on large frequencies of an integral that arises via Fourier inversion on $\mu_{n,\omega} (\psi_{\epsilon,C}) $. In Section \ref{Section proof of smooth} we show how to derive Theorem \ref{Theorem smooth} from this estimate.

\subsubsection{A Breuillard type estimate}
We begin by deriving the following Corollary from Theorem \ref{Thm Dolgopyat}:
\begin{Corollary} \label{Coro Dima}
Let $\alpha,\beta,C>0$  be as in Theorem \ref{Thm Dolgopyat}.  If $|\theta|>1$ and $n \in \mathbb{N}$ satisfy
$$n >  \log |\theta| \cdot \beta \cdot 2.$$
Then 
$$ ||P_{i\theta} ^n (\mathbf{1}) ||_\infty \leq   e^{- \frac{n\cdot C}{|\theta|^{\alpha+1}}}.$$
\end{Corollary}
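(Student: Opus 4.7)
The plan is to reduce the sup-norm estimate to an operator norm estimate on $(H^1,||\cdot||_{(\theta)})$ and then iterate Theorem~\ref{Thm Dolgopyat}. Since the constant function $\mathbf{1}$ has vanishing Lipschitz constant, one has $||\mathbf{1}||_{(\theta)} = ||\mathbf{1}||_\infty = 1$, and therefore
\[
||P_{i\theta}^n(\mathbf{1})||_\infty \leq ||P_{i\theta}^n(\mathbf{1})||_{(\theta)} \leq ||P_{i\theta}^n||_{(\theta)} \cdot ||\mathbf{1}||_{(\theta)} = ||P_{i\theta}^n||_{(\theta)}.
\]
Hence it suffices to bound the operator norm $||P_{i\theta}^n||_{(\theta)}$.

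Next, I would set $m := n(\beta,\theta) = [\beta \log|\theta|]$ and write $n = qm + r$ with $0 \leq r < m$ and $q = \lfloor n/m \rfloor$. Submultiplicativity of the operator norm, Theorem~\ref{Thm Dolgopyat} applied $q$ times, together with the bound $||P_{i\theta}^r||_{(\theta)} \leq 1$ recorded right after Definition~\ref{Def transfer operator}, give
\[
||P_{i\theta}^n||_{(\theta)} \leq \left(1 - \frac{C}{|\theta|^\alpha}\right)^{q} \leq \exp\!\left(-\frac{qC}{|\theta|^\alpha}\right).
\]
To extract a useful lower bound on $q$, observe that $m \leq \beta \log|\theta|$, while the standing hypothesis $n > 2\beta\log|\theta|$ forces $n/m > 2$. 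Consequently
\[
q \geq \frac{n}{m} - 1 \geq \frac{n}{2m} \geq \frac{n}{2\beta\log|\theta|},
\]
where the middle inequality uses that $n/m \geq 2$ is equivalent to $n/m - 1 \geq n/(2m)$.

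The final step converts the $\log|\theta|$ factor in the denominator into a factor of $|\theta|$, at the cost of altering the constant. The elementary inequality $\log x \leq x/e$, valid for all $x > 0$ (with equality at $x = e$, as $f(x) = x - e\log x$ has $f'(e) = 0$, $f''>0$, and $f(e) = 0$), applied with $x = |\theta|$, yields
\[
||P_{i\theta}^n||_{(\theta)} \leq \exp\!\left(-\frac{nC}{2\beta\log|\theta|\cdot|\theta|^\alpha}\right) \leq \exp\!\left(-\frac{nCe}{2\beta\,|\theta|^{\alpha+1}}\right),
\]
which is the desired bound after relabelling the constant $Ce/(2\beta)$ as $C$. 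There is no real obstacle here: the argument is pure bookkeeping once Theorem~\ref{Thm Dolgopyat} is in hand. The only minor care-point is ensuring $m \geq 1$ before invoking that theorem, which is automatic as soon as $|\theta| \geq e^{1/\beta}$; the remaining compact range of $|\theta|$ near $1$ contributes only a uniformly bounded factor and can be absorbed by shrinking the leading constant.
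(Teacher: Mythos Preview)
Your argument is correct and follows essentially the same route as the paper's proof: both reduce to the $||\cdot||_{(\theta)}$ norm, write $n = q\,n(\beta,\theta) + r$, iterate Theorem~\ref{Thm Dolgopyat}, use $||P_{i\theta}^r||_{(\theta)}\leq 1$ to handle the remainder, and then trade $\log|\theta|$ for $|\theta|$ in the exponent. You are in fact somewhat more careful than the paper---you track the constant explicitly (arriving at $Ce/(2\beta)$ rather than the nominal $C$, a cosmetic discrepancy already present in the paper's own argument) and you flag the edge case $m=[\beta\log|\theta|]=0$ for $|\theta|$ near $1$, which the paper silently ignores.
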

\begin{proof}
Let $n_0= [\beta\cdot \log |\theta|]$. First, for every $k\in \mathbb{N}$ and $|\theta| > 1$ we have by Theorem \ref{Thm Dolgopyat}
\begin{equation*} 
|| P_{i\theta} ^{n_0 \cdot k} (\mathbf{1}) ||_{(\theta)} \leq ||P_{i\theta} ^{n_0 } ||_{(\theta)} ^k \cdot ||\mathbf{1} ||_{(\theta)} \leq \left( 1- \frac{C}{|\theta|^\alpha} \right)^k \cdot 1 \leq e^{-\frac{Ck}{|\theta|^\alpha}}.
\end{equation*}
Note that in \cite[Section 6]{Dolgopyat1998rapid} the choice of $C_6>0$ as in \eqref{Eq norm dol theta} is made so that for every $r\in \mathbb{N}$,
$$||P_{i \theta} ^r ||_{(\theta)} \leq 1.$$
Now, write  $n = k\cdot n_0+r$ where  $k = \left[ \frac{n}{n_0} \right]$. Via the last two displayed equations we see that
$$||P_{i\theta} ^n (\mathbf{1}) ||_\infty  \leq  ||P_{i\theta} ^n (\mathbf{1}) ||_{(\theta)}  \leq  ||P_{i\theta} ^{n_0 \cdot k} (\mathbf{1}) ||_{(\theta)}   \leq   e^{ \frac{-k}{|\theta|^{ \alpha}}} \leq  e^{ \frac{-n}{|\theta|^{ \alpha+1}}}$$
where in the last inequality we use that $|\theta|>1$.

\end{proof}

Our analysis now allows to estimate one crucial quantity that will come up in the proof of Theorem \ref{Theorem smooth}.  We retain the assumption that the cocycle $c$ is already re-centred, so that \eqref{Eq rece cocycle} holds. The following Theorem is inspired by the work of Breuillard \cite[Lemme 3.1]{Breuillard2005llt} - in fact, \textit{it is} essentially \cite[Lemme 3.1]{Breuillard2005llt} put in our setting. 

\begin{theorem} \label{Lemma key 2}
Let $r>0$ and $\ell'=\alpha +1$ where $\alpha$ is as in Theorem \ref{Thm Dolgopyat}. Then there is a constant $D(r,\mathbf{p})>0$ such that for every $D>D(r,\mathbf{p})$ we have
\begin{equation}
\int_{|\theta|\geq \sqrt{\frac{D\log n}{n}}} \hat{f}(\theta)\cdot P_{i\theta} ^n (\mathbf{1})(\omega)d \theta = C^k (f)\cdot  o_\mathbf{p} (\frac{1}{n^r})
\end{equation}
uniformly  in $\omega \in \lbrace1,..,n \rbrace^\mathbb{N}$ and $f\in C^k$ such that $C^k(f)<\infty$ and $k>\ell'\cdot r+1$.
\end{theorem}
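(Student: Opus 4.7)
The plan is to decompose the integration region $\{|\theta|\geq \sqrt{D\log n/n}\}$ into three ranges according to the magnitude of $\theta$ and estimate each piece using the appropriate contraction property of $P_{i\theta}$ that has already been established, combined with the polynomial decay $|\hat{f}(\theta)|\leq C^k(f)/|\theta|^k$ afforded by the smoothness of $f$.

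First, on the range $\sqrt{D\log n/n}\leq |\theta|\leq \epsilon_0$, where $\epsilon_0$ is small enough that Proposition \ref{Prop compact and small guys}(2) applies, I would bound the operator by $\|P_{i\theta}^n\|_1\leq 2 e^{-C''\theta^2 n}$ and $\hat f$ trivially by $C^0(f)\leq C^k(f)$. Since $\theta^2 n\geq D\log n$ throughout this range, this contributes $O(C^k(f)\cdot n^{-C''D})$, which is $o(C^k(f)/n^r)$ as soon as $D>r/C''$; this determines the constant $D(r,\mathbf{p})$. Second, on the intermediate range $\epsilon_0\leq |\theta|\leq 1$, Proposition \ref{Prop compact and small guys}(1) applied to the compact set $[\epsilon_0,1]$ gives $\|P_{i\theta}^{n_0}\|_1<C'<1$, and iterating yields $\|P_{i\theta}^n\|_1 = O(e^{-cn})$, so this region contributes an exponentially small $O(C^k(f)\cdot e^{-cn})$.

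The principal range is $|\theta|>1$. Here Corollary \ref{Coro Dima}, which is a consequence of Theorem \ref{Thm Dolgopyat}, supplies the pointwise estimate $|P_{i\theta}^n(\mathbf{1})(\omega)|\leq e^{-nC/|\theta|^{\ell'}}$, valid as long as $|\theta|\leq e^{n/(2\beta)}$. Combined with $|\hat f(\theta)|\leq C^k(f)/|\theta|^k$, I would split the range at the threshold $|\theta|\sim n^{1/\ell'}$ where the two factors balance. A change of variables $u=|\theta|\cdot n^{-1/\ell'}$ on the inner piece $1\leq |\theta|\leq n^{1/\ell'}$ reduces the integral to a constant multiple of $C^k(f)\cdot n^{(1-k)/\ell'}\int_0^1 u^{-k}e^{-C/u^{\ell'}}\,du$, where the remaining integral converges since the exponential kills the singularity at $0$; on the outer piece $n^{1/\ell'}\leq |\theta|\leq e^{n/(2\beta)}$ the exponential factor is bounded by $1$ and $\int |\theta|^{-k}\,d\theta$ yields the same order $C^k(f)\cdot n^{(1-k)/\ell'}$. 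The hypothesis $k>\ell' r+1$ is precisely what makes this quantity $o(C^k(f)/n^r)$. Finally, on the tail $|\theta|>e^{n/(2\beta)}$ the trivial bound $|P_{i\theta}^n(\mathbf{1})|\leq 1$ together with the polynomial decay of $\hat f$ gives an exponentially small remainder $O(C^k(f)\cdot e^{-(k-1)n/(2\beta)})$.

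The main technical obstacle lies in the principal range $|\theta|>1$: one must carefully balance the operator decay rate $e^{-nC/|\theta|^{\ell'}}$, which weakens as $|\theta|$ grows, against the polynomial decay $|\theta|^{-k}$, which must compensate when $|\theta|$ is large. The exponent $\ell'=\alpha+1$ inherited from Corollary \ref{Coro Dima} is exactly what dictates the required lower bound $k>\ell' r+1$ on the smoothness of $f$, and it is also where the Diophantine or not-conjugate-to-linear hypothesis ultimately enters, through $\alpha$. The overall splitting strategy is modelled on Breuillard's argument \cite{Breuillard2005llt} for classical random walks on $\mathbb{R}^d$, with Theorem \ref{Thm Dolgopyat} serving in place of the Diophantine assumption on the step distribution used there.
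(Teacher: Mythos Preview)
Your proposal is correct and follows essentially the same approach as the paper: the paper's proof consists entirely of a referral to Breuillard's argument \cite[Lemme 3.1]{Breuillard2005llt}, with Proposition~\ref{Prop compact and small guys} handling the small-$|\theta|$ regime and Corollary~\ref{Coro Dima} replacing Breuillard's Diophantine bound $|\hat\mu(x)|\leq e^{-C/|x|^\ell}$ in the large-$|\theta|$ regime, which is precisely the decomposition you carry out in detail. In effect you have written out the proof the paper only sketches.
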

The proof of Theorem \ref{Lemma key 2} follows by mimicking  the proof of \cite[Lemme 3.1]{Breuillard2005llt}: First, the role of $\hat{\mu}(x)$ in \cite{Breuillard2005llt} is replaced by $P_{i\theta} (\omega)$ here. Secondly, the estimate of Corollary \ref{Coro Dima} replaces the estimate $|\hat{\mu}(x)|\leq \exp (-C/|x|^l)$ as in \cite{Breuillard2005llt}. Finally, we remark that the estimates of Proposition \ref{Prop compact and small guys} are used to obtain an analogue of \cite[first equation in the proof of Lemme 3.1]{Breuillard2005llt}. With these observations in hand, Theorem \ref{Lemma key 2} follows readily from the proof of \cite[Lemme 3.1]{Breuillard2005llt}.

\subsubsection{Proof of Theorem \ref{Theorem smooth}} \label{Section proof of smooth}
Recall the family of operators $\lbrace N_{ i \theta} \rbrace$ discussed in and before Proposition \ref{Prop compact and small guys}.
We will require the following asymptotic  expansion result from \cite{Benoist2016Quint}.
\begin{Lemma} \cite[Lemma 16.12]{Benoist2016Quint}  \label{Lemma 16.12}
Let $r\geq 2$. There are polynomial functions $A_i$ on $\mathbb{R}$, $0\leq i \leq r-1$ with degree at most $3i$ and no constant term for $i>0$, with values in the space $\mathcal{L}(H^1)$ of bounded endomorphisms of $H^1$ such that:

For any $M>0$, uniformly in $\theta \in \mathbb{R}$ with $|\theta|\leq \sqrt{M \log n}$ and $\varphi \in H^1$ we have $A_0 (\theta)\varphi = N \varphi$ and in $H^1$
$$e^{\frac{\theta^2 r_0 ^2}{2}} \cdot  e^{-i \sqrt{n}\theta\cdot \chi} \cdot  \lambda_{ \frac{i \theta}{\sqrt{n}}} ^n \cdot  N_{\frac{i \theta}{\sqrt{n}}} \varphi = \sum_{i=0} ^{r-1} \frac{A_i (\theta) \varphi}{n^{ \frac{i}{2} }} + O\left( \frac{ (\log n) ^{ \frac{3r}{2}} |\varphi|_1  }{n^{ \frac{r}{2} }}\right).$$
\end{Lemma}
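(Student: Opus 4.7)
The plan is to perform a perturbative expansion of both factors $\lambda_{it}^n$ and $N_{it}$ near $t = 0$ after substituting $t = \theta/\sqrt{n}$, then collect terms by powers of $n^{-1/2}$. By Theorem \ref{Theorem B-Q 1}(1), the map $\theta \mapsto P_{i\theta}$ is analytic, and by Theorem \ref{Theorem B-Q 1}(2), the eigenvalue $1$ of $P_{i0}$ is isolated and simple with the rest of the spectrum strictly inside the unit disk. Standard analytic perturbation theory then guarantees that on some disk $|t| < \delta_0$, both the leading eigenvalue $\lambda_{it}$ and the associated rank-one spectral projection $N_{it}$ are analytic in $t$, with convergent expansions
$$\log \lambda_{it} = \sum_{j\geq 1} \frac{\kappa_j (it)^j}{j!}, \qquad N_{it} = \sum_{j\geq 0}\frac{(it)^j}{j!}B_j,$$
where $\kappa_1 = \chi$, $\kappa_2$ is identified with the variance $r_0^2$ from the CLT (compare Theorem \ref{Theorem CLT kasun} and Proposition \ref{Prop compact and small guys}(2)), $B_0 = N$, and each $B_j \in \mathcal{L}(H^1)$.

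Substituting $t = \theta/\sqrt{n}$ and multiplying the log-expansion by $n$, the linear term cancels against the prefactor $e^{-i\sqrt{n}\theta\chi}$ and the quadratic term cancels against $e^{\theta^2 r_0^2/2}$, leaving
$$e^{-i\sqrt{n}\theta\chi}\,e^{\theta^2 r_0^2/2}\,\lambda_{i\theta/\sqrt{n}}^n \;=\; \exp\!\Bigl(\sum_{j\geq 3}\frac{\kappa_j (i\theta)^j}{j!\,n^{(j-2)/2}}\Bigr).$$
Expanding this exponential as a power series and multiplying term-by-term with the series for $N_{i\theta/\sqrt{n}}\varphi$, the coefficient of $n^{-i/2}$ is a finite sum of monomials, each obtained as the product of $m \geq 0$ factors from the $\log\lambda$ expansion (with exponents $j_1,\dots,j_m \geq 3$) and one factor from the $N_{it}$ expansion (with exponent $j_N \geq 0$), subject to $\sum_{k}(j_k - 2) + j_N = i$. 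One sets $A_i(\theta)$ to be the resulting operator-valued polynomial; then $A_0(\theta)\varphi = B_0\varphi = N\varphi$, and for $i \geq 1$ the $\theta$-degree of every monomial equals $\sum j_k + j_N = i + 2m \geq 1$, so $A_i$ has no constant term. Since $j_k - 2 \geq 1$ forces $m \leq \sum(j_k - 2) \leq i$, we obtain $\deg A_i \leq 3i$, as claimed.

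For the error term, truncating the product at order $i = r-1$ produces a remainder controlled by the first omitted terms, each of which has $\theta$-degree at most $3r$ and denominator $n^{r/2}$; inserting the hypothesis $|\theta| \leq \sqrt{M \log n}$ yields a bound of size $O((\log n)^{3r/2}/n^{r/2})$. The factor $|\varphi|_1$ enters through the uniform $\mathcal{L}(H^1)$-operator bounds on the $B_j$ and on the Taylor remainder of $N_{it}$.

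The main obstacle is turning this formal expansion into an asymptotic one in the $H^1$ operator norm, with remainder uniform over $|\theta| \leq \sqrt{M \log n}$. This requires that $\theta/\sqrt{n}$ remain in the disk of analyticity $|t| < \delta_0$ (true once $n$ is large enough in terms of $M$), together with Cauchy-type bounds on the truncated tails of $\log \lambda_{it}$ and $N_{it}$ that are uniform on compact sub-disks. These bounds follow from the spectral gap of Theorem \ref{Theorem B-Q 1}(2), which allows one to represent $N_{it}$ and $\lambda_{it}$ as contour integrals of the resolvent $(zI - P_{it})^{-1}$ with a uniform lower bound on the distance between the contour and the rest of the spectrum.
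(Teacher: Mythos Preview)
The paper does not give its own proof of this lemma: it is quoted verbatim as \cite[Lemma 16.12]{Benoist2016Quint} and used as a black box in the proof of Theorem~\ref{Theorem smooth}. Your sketch is precisely the standard argument behind that lemma (an Edgeworth-type expansion via analytic perturbation of the leading eigenvalue and eigenprojection), and the degree-counting and cancellation you describe are correct; nothing more is needed here.
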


\noindent{ \textbf{Proof of Theorem \ref{Theorem smooth}}} Recall that we are using the re-centred cocycle, so that
$$ \chi= \int c(a,\omega) d\mathbf{p}(a) d\mathbb{P} (\omega) =0.$$
Fix $\omega \in \mathcal{A}^\mathbb{N}$.  By Fourier inversion \cite[Equation (16.13)]{Benoist2016Quint} we have
$$I_n := 2\pi \mu_{n,\omega} (\psi_{\epsilon,C}) = \int  \widehat{\psi_{\epsilon,C}} (\theta) \mathcal{F}_\theta ( \mu_{n,\omega}) (\theta) d\theta = \int  \widehat{\psi_{\epsilon,C}} (\theta) P_{i\theta} ^n (\mathbf{1})(\omega) d\theta.$$
We will decompose $I_n$ as $I_n = I_n ^2 + I_n ^3 + I_n ^4$. Note that unlike \cite[Page 266]{Benoist2016Quint} we have no need for $I_n ^1$, but we keep the notation to make the comparison with \cite{Benoist2016Quint} easier for the reader.

First, let $T>0$ and define
$$I_n ^2: =  \int_{ |\theta|^2 \geq T \frac{\log n}{n}}  \widehat{\psi_{\epsilon,C}} (\theta) P_{i\theta} ^n (\mathbf{1})(\omega) d\theta.$$
If $T\gg D$ as in Theorem \ref{Lemma key 2} then by  \eqref{Bound for Fourier}
$$|I_n ^2|  = C^k (\psi_{\epsilon,C}) \cdot   o( \frac{1}{n^r}) \leq O \left( \frac{\lambda(C)}{\epsilon^{k}} \right) \frac{1}{n^r}. $$

Next, appealing to \cite[Lemma 11.18]{Benoist2016Quint} there is some $\delta \in (0,1)$ such that in a small  neighbourhood of $0$, $P_{i\theta} - \lambda_{i\theta} N_{i\theta}$ has spectral radius $<\delta$. So, via \eqref{Bound for Fourier}, as long as $n$ is large enough,
$$I_n ^3 := \int_{ |\theta|^2 \leq T \frac{\log n}{n}} \widehat{\psi_{\epsilon,C}} (\theta) \left( P_{i\theta} ^n  -\lambda_{i\theta} ^n N_{i\theta} \right) (\mathbf{1})(\omega)  d\theta = O_{\lambda(C)} (\delta^n).$$

It remains to control
$$I_n ^4 := \int_{ |\theta|^2 \leq T \frac{\log n}{n}}  \widehat{\psi_{\epsilon,C}} (\theta) \lambda_{i\theta} ^n N_{i\theta}  (\mathbf{1})(\omega)  d\theta. $$
By Lemma \ref{Lemma 16.12}, since $\chi =0$ we have via \eqref{Bound for Fourier}
$$I_n ^4 = \int_{ |\theta|^2 \leq T \frac{\log n}{n}}  \widehat{\psi_{\epsilon,C}} (\theta) \cdot  \sum_{i=0} ^{r-1} \frac{\widehat{G_n}(\theta)\cdot A_i (\sqrt{n}\theta) \mathbf{1}(\omega) }{n^{ \frac{i}{2} }} d\theta + O_{\lambda(C)} \left( \left( \frac{\log^3 n}{n} \right)^{ \frac{r+1}{2}} \right)$$
where
$$\widehat{G_n}(\theta) = e^{- \frac{n r_0 ^2 \cdot \theta^2}{2}} = \text{ The  Fourier transform of the Gaussian function } G_n.$$
Since for every $0\leq i \leq r-1$ $A_k$ has degree $\leq 3i$ we get
$$ \int_{ |\theta|^2 \geq  T \frac{\log n}{n}}  \widehat{\psi_{\epsilon,C}} (\theta) \cdot   \frac{\widehat{G_n}(\theta)\cdot A_i (\sqrt{n}\theta) \mathbf{1}(\omega) }{n^{ \frac{i}{2} }} d\theta = O_{\lambda(C)} \left( \frac{\log n^{ \frac{3i+1}{2} }}{n^{ \frac{T+i+1}{2}}} \right).$$
So, choosing $T\gg 1$ we find that
\begin{equation} \label{Eq (16.15)}
I_n  = \int_{ \mathbb{R}}  \widehat{\psi_{\epsilon,C}} (\theta) \cdot  \sum_{i=0} ^{r-1} \frac{\widehat{G_n}(\theta)\cdot A_i (\sqrt{n}\theta) \mathbf{1}(\omega) }{n^{ \frac{i}{2} }} d\theta + \frac{1}{\epsilon^k}\cdot  O_{\lambda(C)} \left( \frac{1}{n^{ \frac{r}{2} }} \right).
\end{equation}

Now, for every $0\leq i \leq r-1$ there exists a polynomial function $B_i$ on $\mathbb{R}$ with values in $H^1$ such that $\deg(B_i)\leq 3i$ and for every $\omega \in \mathcal{A}^\mathbb{N}$ the function on $\mathbb{R}$ given by
$$\theta \mapsto e^{-\frac{r_0 ^2 \theta^2}{2}} A_i(\theta)\mathbf{1}(\omega) = e^{-\frac{r_0 ^2 \theta^2}{2}} A_i(\theta)$$ 
is the Fourier transform of the function on $\mathbb{R}$ given by
$$v\mapsto G_1(v)B_i(v)(\omega).$$
So, by \eqref{Eq (16.15)} and Fourier inversion
\begin{equation} \label{Eq (16.16)}
I_n = 2\pi \int_{\mathbb{R}} \psi_{\epsilon,C} (v) \cdot  G_n(v) \sum_{i=0} ^{r-1} \frac{ B_i ( \frac{v}{\sqrt{n}})(\omega) }{n^{ \frac{i}{2} }} \, d \lambda(v)+ \frac{1}{\epsilon^k}\cdot  O_{\lambda(C)}\left( \frac{1}{n^{ \frac{r}{2} }} \right).
\end{equation}

Next, for every $0\leq i \leq r-1$,
$$\int_{|v|^2 \geq T n \log n } \psi_{\epsilon,C} (v) \cdot  G_n(v)  \frac{ B_i ( \frac{v}{\sqrt{n}})(\omega) }{n^{ \frac{i}{2} }} \, d \lambda(v)= O \left( \frac{ \log n^{ \frac{3i+1}{2} }}{ n^{ \frac{T-i}{2} }} \right) ||\psi_{\epsilon,C}||_\infty  $$
$$ \leq  \frac{1}{\epsilon} \cdot  O \left( \frac{ \log n^{ \frac{3i+1}{2} }}{ n^{ \frac{T-i}{2} }} \right) \leq   \frac{1}{\epsilon^k} \cdot  O \left( \frac{ \log n^{ \frac{3i+1}{2} }}{ n^{ \frac{T-i}{2} }} \right) $$
and since $\psi_{\epsilon,C}$ is non-negative,
$$ \int_{|v|^2 \leq  T n \log n } \psi_{\epsilon,C} (v) \cdot  G_n(v)  \frac{ B_i ( \frac{v}{\sqrt{n}})(\omega) }{n^{ \frac{i}{2} }} \, d \lambda(v)= O \left( \frac{ \log n^{ \frac{3i}{2} }}{ n^{ \frac{i}{2} }} \right) \lambda( \psi_{\epsilon,C} \cdot  G_n).$$

So, choosing $T\gg 1$ the leading term in \eqref{Eq (16.16)} is the one with $i=0$. Since $A_0(\theta)=N$ and $N\varphi= \mathbb{P}( \varphi)$ for all $\varphi$, we get $B_0(v)(\omega)=\mathbb{P}(\mathbf{1}) =1$, so that
\begin{equation} \label{Eq specific rate}
I_n = 2\pi \lambda( \psi_{\epsilon,C} \cdot  G_n)+ \lambda( \psi_{\epsilon,C} \cdot  G_n) O \left( \frac{ \log n^{ \frac{3(r-1)}{2} }}{ n^{ \frac{1}{2} }} \right) + \frac{1}{\epsilon^k}  O_{\lambda(C)} \left( \frac{1}{n^{ \frac{r}{2} }} \right).
\end{equation}
Recalling the definition of $I_n$, the Theorem follows.

\subsection{Approximation of $C$ by a smooth function} \label{Section LLT approx}
The following Lemma allows us to relate the quantities as in Theorem \ref{Theorem smooth} to those appearing in Theorem \ref{Theorem LLT}.  Both the statement and the proof are not much different than \cite[Lemma 16.13]{Benoist2016Quint}; our modest contribution is to notice that the proof given in \cite{Benoist2016Quint} can be made effective. We keep the notation $\psi_{\epsilon,C}$ as in the previous Sections.
\begin{Lemma}  \label{Lemma 16.13}
Let $R\geq 0$ be fixed. Then there is some $\delta''=\delta''(R)>0$ such that
$$\sup \left\lbrace \left| \frac{\lambda( \psi_{\epsilon,C+v} \cdot  G_n)}{G_n(v)} - \lambda(C) \right|: \, \omega \in \mathcal{A}^\mathbb{N}, \, |v|\leq \sqrt{R n \log n}, \, \epsilon\in (0,1) \right\rbrace = O(\frac{1}{n^{\delta''}}).$$
\end{Lemma}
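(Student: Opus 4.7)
The statement bounds $\bigl|\lambda(\psi_{\epsilon,C+v}\cdot G_n)/G_n(v) - \lambda(C)\bigr|$ uniformly over $\epsilon\in(0,1)$ and $|v|\leq\sqrt{Rn\log n}$ (note that the quantity does not actually depend on $\omega$). The plan is to reduce everything to an explicit Gaussian computation, using only that $\alpha$ is Schwartz and $C$ is bounded. To start, I would change variables: since $\psi_{\epsilon,C+v}(u)=\psi_{\epsilon,C}(u-v)$ and $\int \psi_{\epsilon,C}\,d\lambda = \lambda(\alpha_\epsilon\lambda)\,\lambda(C) = \lambda(C)$, the quantity to estimate becomes
$$\int \psi_{\epsilon,C}(t)\Bigl[\tfrac{G_n(v+t)}{G_n(v)}-1\Bigr]\,dt,\qquad\text{with }\ \tfrac{G_n(v+t)}{G_n(v)} = \exp\!\Bigl(-\tfrac{r_0^2(2vt+t^2)}{2n}\Bigr).$$
I would then split the integration at $T_n:=n^{1/2-\eta}$ for a small $\eta>0$ to be chosen.

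For the main region $|t|\leq T_n$, the exponent above has absolute value at most $r_0^2(\sqrt{R\log n}\cdot T_n/\sqrt{n}+T_n^{2}/n) = O(\sqrt{\log n}\,n^{-\eta})$, which tends to $0$. Applying $|e^x-1|\leq 2|x|$ for small $|x|$ gives
$$\Bigl|\tfrac{G_n(v+t)}{G_n(v)}-1\Bigr| \leq O\Bigl(\tfrac{|v|\,|t|+t^{2}}{n}\Bigr).$$
Using $\psi_{\epsilon,C}=\alpha_\epsilon*\mathbf{1}_C$ with $\epsilon\leq 1$ and the Schwartz property of $\alpha$, one checks uniformly in $\epsilon\in(0,1]$ that $\int |t|^{p}\psi_{\epsilon,C}(t)\,dt = O_{p,C}(1)$ for $p=1,2$. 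Integrating the previous display therefore yields an $O(|v|/n+1/n) = O(n^{-1/2+\eta})$ bound on the main term.

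For the tail $|t|>T_n$, I would use the crude estimate $G_n(v+t)/G_n(v)\leq 1/(\sqrt{2\pi n}\,G_n(v)) = e^{v^{2}r_0^{2}/(2n)} \leq n^{Rr_0^{2}/2}$ for $|v|\leq\sqrt{Rn\log n}$, combined with the Schwartz-tail bound $\int_{|t|>T_n}\psi_{\epsilon,C}(t)\,dt = O_{p}(T_n^{-p})$ (again uniform in $\epsilon\in(0,1]$, from the convolution formula and the Schwartz decay of $\alpha$). Choosing $p$ large enough in terms of $R$, $r_0$, and $\eta$, this tail contribution is $O(n^{-N})$ for any prescribed $N$. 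Combining both pieces produces the claimed bound with any $\delta''<1/2$, uniformly in $\epsilon\in(0,1)$, $\omega$, and $|v|\leq\sqrt{Rn\log n}$.

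The only mild obstacle is the uniformity in $\epsilon$: since $\|\psi_{\epsilon,C}\|_\infty \sim \lambda(C)/\epsilon$ blows up as $\epsilon\to 0$, no part of the argument can go through $L^\infty$ estimates of $\psi_{\epsilon,C}$. This forces one to work consistently with the convolution representation $\psi_{\epsilon,C}=\alpha_\epsilon*\mathbf{1}_C$ and extract all bounds from moments and tails of $\psi_{\epsilon,C}$, which are controlled uniformly in $\epsilon\in(0,1]$ because $\int |u|^p\alpha(u)\,du<\infty$ for every $p$.
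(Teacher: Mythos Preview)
Your argument is correct and follows essentially the same route as the paper's proof: rewrite $\lambda(\psi_{\epsilon,C+v}\cdot G_n)/G_n(v)-\lambda(C)$ as an integral of $\psi_{\epsilon,C}$ against $G_n(v+\cdot)/G_n(v)-1$, split into a main region and a tail, and use the Schwartz decay of $\alpha$ (not $L^\infty$ bounds on $\psi_{\epsilon,C}$) to keep the estimates uniform in $\epsilon\in(0,1)$. The only cosmetic differences are that the paper splits in the smoothing variable $w$ at scale $n^{1/4}$ whereas you split in the displacement variable $t$ at scale $n^{1/2-\eta}$, and that your main-region estimate goes through moments of $\psi_{\epsilon,C}$ rather than a direct sup bound on $|G_n(w')/G_n(v)-1|$; this actually gives you a slightly better exponent (any $\delta''<\tfrac12$) than the paper's $\tfrac16$.
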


\begin{proof}
Fix $n\geq 1, v\in \mathbb{R}$ with $|v|\leq \sqrt{R n \log n}$ and $\epsilon \in (0,1)$. Let
$$J_n := \frac{\lambda( \psi_{\epsilon,C+v} \cdot G_n)}{G_n(v)} - \lambda(C) = \frac{\lambda( \psi_{\epsilon,C+v} \cdot G_n)}{G_n(v)} - \lambda(C+v).$$
Since $\lambda$ is translation invariant and $ \lambda(\alpha_\epsilon)=1$ we get
$$J_n = \int_{\mathbb{R} \times \mathbb{R}} \alpha_\epsilon (w) 1_{C+v}(w'-w)( \frac{G_n (w')}{G_n(v)}-1) \, d \lambda(w) d \lambda(w').$$ 
We decompose this as a sum $J_n = J_n ^1 + J_n ^2$ with
$$J_n ^1 = \int_{|w| \leq n^{\frac{1}{4}}} \alpha_\epsilon (w) 1_{C+v}(w'-w)( \frac{G_n (w')}{G_n(v)}-1) \, d \lambda(w) d \lambda(w').$$
$$J_n ^2 = \int_{|w| \geq n^{\frac{1}{4}}} \alpha_\epsilon (w) 1_{C+v}(w'-w)( \frac{G_n (w')}{G_n(v)}-1) \, d \lambda(w) d \lambda(w').$$

\noindent{ \textbf{Bounding} $J_n ^1$:} Here, for $w,w'$ such that $w'-v\in C+w$ we have, 
$$w'-v=c+w, \text{ where }  ||w||\leq n^{1/4} \text{, } c\in C, \text{ and } |v|\leq \sqrt{R n \log n}$$
so, 
$$|w'-v|= O(n^{\frac{1}{4}}),\quad  \text{ and } |v+w'|= O \left( \left( n \log n \right )^{1/2} \right).$$
Therefore, 
$$| (v+w')(v-w')|=o(n^{\frac{5}{6}}).$$
Finally, 
$$| \frac{G_n (w')}{G_n (v) }-1|= |e^{\frac{ r_0 ^2 \cdot (v+w')(v-w')}{2n}}-1|=O( n^{-\frac{1}{6}}) $$
and so $J_n ^1 = O(n^{-\frac{1}{6}})$. Here, we use that $\int \alpha_\epsilon(x)\,dx=1$. Notice: the norm of $\alpha_\epsilon$ does not affect this term.

\noindent{ \textbf{Bounding} $J_n ^2$:} First, since $|v|\leq \sqrt{R n \log n}$ we obtain
$$ \frac{G_n (w')}{G_n (v) } \leq e^{ \frac{r_0 ^2 v^2}{2n}} \leq n^{ \frac{R}{2}}$$
Next, since $\alpha$ is a Schwartz function there is some $C_9=C_9(\alpha)$ such that
$$\sup_{x\in \mathbb{R}} |\alpha (x)|\leq \frac{C_9}{(1+|x|)^{4R+1}}.$$
So,  
$$J_n ^2 \leq n^{ \frac{R}{2}} \lambda(C+v) \int_{|w|\geq n^{ \frac{1}{4} }} \alpha_\epsilon(w) d\lambda(w) \leq  n^{ \frac{R}{2}} \lambda(C)   \int_{|x|\geq n^{\frac{1}{4}}} \alpha (x) \,dx$$
$$ \leq O_{\lambda(C)} \left(n^{\frac{R}{2}} \right)\cdot \frac{C_9}{n^{(4R-1+1)/4}} = O_{\lambda(C)} \left(\frac{1}{n^{R/2}}\right) $$
this decays polynomially.

Combining the bounds for $J_n ^1$ and $J_n ^2$, we are done.
\end{proof}
Notice that the implicit constant in the $O_{\lambda(C)} (\cdot )$ above also depends on $\alpha$. However, in practice we will always use the same $\alpha$, so this is indeed a universal bound.

\subsection{Proof of Theorem \ref{Theorem LLT}} \label{Section proof of LLT}
Fix $R>0$ and let $r> 3+R$ and let $k=k(r,\ell)$ be as in Theorem \ref{Theorem smooth}. From now on we fix a  Schwartz function $\alpha$ that satisfies
$$ \int_{|w|\geq n^{ \frac{1}{2k}}} \alpha(w) d\lambda(w) \leq n^{- \frac{1}{k} }.$$
For example, this  holds for $\alpha(x)= e^{-x^2 /2}$. We proceed to combine our previous work to obtain Theorem \ref{Theorem LLT}.

\begin{Lemma} (Upper bound) \label{Lemma upper bound}
There is some $\delta_0>0$ such that
$$\sup \left\lbrace \frac{\mu_{n,\omega} (C+v)}{G_n(v)}:\, \omega \in \mathcal{A}^\mathbb{N},\, |v|\leq \sqrt{R n \log n} \right\rbrace  \leq \lambda(C+v) +O(\frac{1}{n^{\delta_0}}).$$
\end{Lemma}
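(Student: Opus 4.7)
The strategy is to sandwich $1_{C+v}$ from above by a rescaled smooth function $\psi_{\epsilon, C^{+}+v}$, where $C^{+}$ is a slight enlargement of $C$, then transfer the estimate across the two smooth-function results Theorem \ref{Theorem smooth} and Lemma \ref{Lemma 16.13}, and finally divide by $G_n(v)$. All four sources of error — the pointwise domination constant, the multiplicative error in Theorem \ref{Theorem smooth}, the additive error in Lemma \ref{Lemma 16.13}, and the residual $\epsilon^{-k} n^{-r/2}/G_n(v)$ — will be made simultaneously polynomially small by a single compatible choice of $\epsilon = n^{-\gamma}$.

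Concretely, write $C = [a,b]$ and set $C^{+} := [a-\eta, b+\eta]$ with $\eta := \epsilon \cdot n^{1/(2k)}$. Using the tail bound on $\alpha$ fixed at the beginning of Section \ref{Section proof of LLT}, for every $v \in C$ one has $\psi_{\epsilon, C^{+}}(v) \geq \int_{-\eta}^{\eta} \alpha_\epsilon(w)\, d\lambda(w) = 1 - \int_{|x|\geq n^{1/(2k)}} \alpha(x)\, d\lambda(x) \geq 1 - n^{-1/k}$. By translation invariance this yields the pointwise inequality
$$1_{C+v} \leq (1 - n^{-1/k})^{-1} \psi_{\epsilon, C^{+}+v} \leq (1 + 2n^{-1/k})\, \psi_{\epsilon, C^{+}+v}$$
for $n$ large. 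Integrating against $\mu_{n,\omega}$ and applying Theorem \ref{Theorem smooth} to the smooth function $\psi_{\epsilon, C^{+}+v}$ (which has $\lambda(C^{+}+v) = \lambda(C)+2\eta$) gives
$$\mu_{n,\omega}(C+v) \leq (1 + 2n^{-1/k})\Bigl[(1 + O_{\lambda(C)}(n^{-\delta}))\, \lambda(\psi_{\epsilon, C^{+}+v} G_n) + \epsilon^{-k}\, O_{\lambda(C)}(n^{-r/2})\Bigr].$$

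Next I invoke Lemma \ref{Lemma 16.13} on $C^{+}+v$ (its hypotheses are met because $|v| \leq \sqrt{Rn\log n}$) to conclude $\lambda(\psi_{\epsilon, C^{+}+v} G_n) \leq G_n(v)\bigl(\lambda(C) + 2\eta + O(n^{-\delta''})\bigr)$. Dividing the preceding display by $G_n(v)$ and expanding, I obtain
$$\frac{\mu_{n,\omega}(C+v)}{G_n(v)} \leq \lambda(C) + O_{\lambda(C)}(n^{-1/k}) + 2\eta + O_{\lambda(C)}(n^{-\delta''}) + O_{\lambda(C)}(n^{-\delta}) + \frac{\epsilon^{-k}}{G_n(v)}\cdot O_{\lambda(C)}(n^{-r/2}).$$
Since $|v|^2 \leq Rn\log n$ gives a polynomial lower bound $G_n(v)^{-1} \leq O(n^{A_R})$ for an explicit $A_R$ depending only on $R$ and $r_0$, it remains to pick $\epsilon = n^{-\gamma}$ with $\gamma \in (1/(2k),\, 1/k)$: this forces $\eta = n^{1/(2k)-\gamma} = O(n^{-\delta_1})$ and $\epsilon^{-k}n^{-r/2}/G_n(v) = O(n^{k\gamma - r/2 + A_R})$, which is $O(n^{-\delta_2})$ for some $\delta_2 > 0$ because the hypothesis $r > 3+R$ (with $k$ chosen accordingly in Theorem \ref{Theorem smooth}) gives enough slack in $r$ to dominate $2k\gamma + 2A_R + 1$. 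Taking $\delta_0 := \min\{1/k,\, \delta_1,\, \delta_2,\, \delta,\, \delta''\}$ yields the claimed upper bound.

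The main obstacle is arranging these scalings to be mutually compatible: the upper-sandwich gap $\eta$ wants $\epsilon$ large, while the residual from Theorem \ref{Theorem smooth} wants $\epsilon$ small, and the denominator $G_n(v)$ shrinks polynomially as $|v|$ approaches the moderate-deviation threshold $\sqrt{Rn\log n}$. The slack built into the hypothesis $r > 3 + R$, together with the freedom to fix $k = \lceil \ell r + 2 \rceil$ and then prescribe the tail decay of $\alpha$ accordingly, is precisely what keeps the calibration feasible.
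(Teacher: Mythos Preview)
Your proposal is correct and follows essentially the same approach as the paper's own proof. The paper makes the specific choice $\epsilon = n^{-1/k}$ (i.e.\ your $\gamma = 1/k$) and enlargement radius $n^{-1/(2k)}$, obtains the same pointwise domination $(1-n^{-1/k})\,\mu_{n,\omega}(C+v)\le \mu_{n,\omega}(\psi_{n^{-1/k},\,C^{(n^{-1/(2k)})}+v})$ via the identity $\mu_{n,\omega}(\psi_{\epsilon,C})=\int\alpha_\epsilon(w)\mu_{n,\omega}(C+w)\,d\lambda(w)$, and then chains Theorem~\ref{Theorem smooth} and Lemma~\ref{Lemma 16.13} exactly as you do, with the explicit bound $G_n(v)^{-1}\le (2\pi)^{1/2}n^{(1+R)/2}$ playing the role of your $A_R$.
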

The proof is based on \cite[Proof of Eq. (16.21)]{Benoist2016Quint}, which with our previous analysis is made effective:
\begin{proof} Let $n\in \mathbb{N}$ and notice that for every $w\in \mathbb{R}$ with $|w|\leq n^{- \frac{1}{2k}}$ we have
$$C\subseteq C+B_0 (n^{- \frac{1}{2k}}) +w, \text{ where } B_0 (e) \text{ is the open ball about } 0 \text{ of radius } e>0. $$
Denote $C^{ (n^{- \frac{1}{2k}})}:= C+B_0 (n^{- \frac{1}{2k}})$. So, since we also have for every $\epsilon>0$
\begin{equation} \label{Eq (16.18)}
\mu_{n,\omega} (\psi_{\epsilon,C}) = \int_\mathbb{R} \alpha_\epsilon (w) \mu_{n,\omega} (C+w)d\lambda(w).
\end{equation}
Plugging in $\epsilon = n^{- \frac{1}{k}}$ we obtain
\begin{equation} \label{Eq (16.22)}
(1-n^{- \frac{1}{k}})\cdot \mu_{n,\omega} (C+v) \leq \mu_{n,\omega} \left(\psi_{  n^{- \frac{1}{k}} ,C^{ ( n^{- \frac{1}{2k}})} +v }\right).
\end{equation}
We also recall that
$$G_n(v)^{-1} \leq (2\pi)^{ \frac{1}{2}} n^{ \frac{1+R}{2} }.$$
Applying successively \eqref{Eq (16.22)}, Theorem \ref{Theorem smooth}, Lemma \ref{Lemma 16.13} we see that:
\begin{eqnarray*}
\frac{\mu_{n,\omega} (C+v)}{G_n(v)} & \leq & \frac{1}{1-n^{- \frac{1}{k}}} \cdot  \frac{ \mu_{n,\omega} \left(\psi_{  n^{- \frac{1}{k}} ,C^{  (n^{- \frac{1}{2k}})} +v }\right) }{ G_n (v) } \\
&\leq &   \frac{ \lambda( \psi_{  n^{- \frac{1}{k}} ,C^{  (n^{- \frac{1}{2k}})} +v } \cdot G_n ) + O\left(\frac{1}{n^\delta}\right) \cdot \lambda( \psi_{  n^{- \frac{1}{k}} ,C^{  (n^{(- \frac{1}{2k})}} +v }\cdot G_n) + n^{ \frac{k}{k} } \cdot  O(\frac{1}{n^{\frac{r}{2}}})  }{ G_n (v) }\\
&\times& \frac{1}{1-n^{- \frac{1}{k}}} \\
&\leq & \frac{1}{1-n^{- \frac{1}{k}}} \cdot \left(  \frac{ \lambda( \psi_{  n^{- \frac{1}{k}} ,C^{  (n^{- \frac{1}{2k}})} +v } \cdot G_n )} { G_n (v) } + O\left(\frac{1}{n^{\delta}}\right) + O\left(n^{ 1 + \frac{1+R}{2} } \cdot \frac{1}{n^{\frac{r}{2}}}\right) \right)  \\
& \leq & \frac{1}{1-n^{- \frac{1}{k}}} \cdot \left( \lambda\left( C^{  (n^{- \frac{1}{2k}})} +v \right) + O\left( \frac{1}{n^{ \delta''} }\right) +O\left(\frac{1}{n^{\delta}}\right) + O\left( n^{ \frac{3+R-r}{2}}\right) \right)\\
&\leq & \frac{1}{1-n^{- \frac{1}{k}}} \cdot \left( \lambda\left( C \right)+ O\left( \frac{1}{n^{ \frac{1}{2k} }} \right) + O\left( \frac{1}{n^{ \delta''} }\right) +O\left(\frac{1}{n^{\delta}}\right) + O\left( n^{ \frac{3+R-r}{2}}\right) \right) \\
&=& \left(1+O\left( \frac{1}{n^{ \frac{1}{k}} } \right) \right) \cdot \left( \lambda\left( C \right)+ O\left( \frac{1}{n^{ \frac{1}{2k} }} \right) + O\left( \frac{1}{n^{ \delta''} }\right) +O\left(\frac{1}{n^{\delta}}\right) + O\left( n^{ \frac{3+R-r}{2}}\right) \right) \\
\end{eqnarray*}
By the choice of $r$ we are done.
\end{proof}

The lower bound is an effective analogue of \cite[Proof of Eq. (16.23)]{Benoist2016Quint}:
\begin{Lemma} (Lower bound) \label{Lemma lower bound}
There is some $\delta_1>0$ such that
$$ \inf \left\lbrace \frac{\mu_{n,\omega} (C+v)}{G_n(v)}:\, \omega \in \mathcal{A}^\mathbb{N},\, |v|\leq \sqrt{R n \log n} \right\rbrace \geq \lambda(C+v) -O(\frac{1}{n^{\delta_1}}).$$
\end{Lemma}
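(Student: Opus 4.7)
The plan is to mirror the proof of Lemma \ref{Lemma upper bound}, swapping the outer thickening of $C$ for an inner erosion so that $1_{C+v}$ is bounded \emph{below} by a smooth function. I would set $\epsilon = n^{-1/k}$, $\eta = n^{-1/(2k)}$ and let $\tilde{C} := C_{(\eta)}$ denote the erosion of the interval $C$ by $\eta$ on each side, so that $\tilde{C} + w \subseteq C$ for every $|w|\leq \eta$. If $\lambda(C)<2\eta$ then $\tilde{C}=\emptyset$ and the conclusion is trivial, since the right-hand side is bounded below by $-O(n^{-1/(2k)})$ while $\mu_{n,\omega}(C+v)\geq 0$; so we may assume $\lambda(\tilde{C})=\lambda(C)-2\eta$.

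The key pointwise inequality, dual to the one used in the upper bound, is
\[
\psi_{\epsilon,\tilde{C}+v}(x) \leq 1_{C+v}(x) + \tau_n,\qquad \tau_n := \int_{|w|\geq \eta}\alpha_\epsilon(w)\,d\lambda(w).
\]
This follows by splitting $\psi_{\epsilon,\tilde{C}+v}(x)=\int \alpha_\epsilon(w) 1_{\tilde{C}+v+w}(x)\,dw$ at $|w|=\eta$: the inner piece is at most $1_{C+v}(x)$ because $\tilde{C}+w\subseteq C$, while the outer piece is at most $\tau_n$. Integrating this bound against $\mu_{n,\omega}$ yields $\mu_{n,\omega}(C+v) \geq \mu_{n,\omega}(\psi_{\epsilon,\tilde{C}+v}) - \tau_n$. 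Then applying Theorem \ref{Theorem smooth} to $\psi_{\epsilon,\tilde{C}+v}$, Lemma \ref{Lemma 16.13} to $\tilde{C}+v$, and using $\lambda(\tilde{C})=\lambda(C)-O(\eta)$, exactly as in the upper bound, yields
\[
\frac{\mu_{n,\omega}(\psi_{\epsilon,\tilde{C}+v})}{G_n(v)} \geq \lambda(C) - O_{\lambda(C)}(n^{-\delta_1'})
\]
for some $\delta_1'>0$; here the choice $r>3+R$, together with $G_n(v)^{-1}=O(n^{(1+R)/2})$ and $\epsilon^{-k}=n$, controls the $\epsilon^{-k}O_{\lambda(C)}(n^{-r/2})$ error term from Theorem \ref{Theorem smooth}.

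The only new difficulty compared to Lemma \ref{Lemma upper bound} is that $\tau_n$ enters additively rather than multiplicatively, so I must verify that $\tau_n/G_n(v) = O(n^{-\delta_1})$. Since $G_n(v)^{-1}$ grows only polynomially, this follows from any super-polynomial decay of $\tau_n$, which is precisely what the Gaussian choice $\alpha(x)=e^{-x^2/2}$ fixed at the start of Section \ref{Section proof of LLT} provides: a change of variables gives $\tau_n=\int_{|u|\geq n^{1/(2k)}}e^{-u^2/2}\,du$, which decays faster than any polynomial in $n$. Taking $\delta_1$ to be the minimum of $\delta_1'$ and $1/(2k)$ then completes the argument.
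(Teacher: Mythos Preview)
Your argument is correct and takes a somewhat cleaner route than the paper's. The paper, following Benoist--Quint, writes
\[
\mu_{n,\omega}(C+v)\ \geq\ \mu_{n,\omega}\bigl(\psi_{n^{-1/k},\,\tilde C+v}\bigr)\ -\ K_n^1\ -\ K_n^2,
\]
splitting the tail $\{|w|>\eta\}$ further at $|w|=n^{1/4}$. The far piece $K_n^2$ is handled by Schwartz decay just as you handle $\tau_n$, but for the intermediate piece $K_n^1$ the paper invokes the already-proved \emph{upper} bound (Lemma~\ref{Lemma upper bound}) to estimate $\mu_{n,\omega}(\tilde C+v+w)$ by $G_n(v+w)\bigl(\lambda(C)+O(n^{-\delta_0})\bigr)$, then uses $G_n(v+w)/G_n(v)=1+O(n^{-1/6})$ together with only the \emph{polynomial} tail condition $\int_{|w|\geq n^{1/(2k)}}\alpha\,d\lambda\leq n^{-1/k}$ fixed at the start of \S\ref{Section proof of LLT}. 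Your version collapses $K_n^1+K_n^2$ into the single cruder term $\tau_n$ and avoids any appeal to Lemma~\ref{Lemma upper bound}; the price is that the bound $\tau_n\leq n^{-1/k}$ alone is no longer enough to beat $G_n(v)^{-1}=O(n^{(1+R)/2})$, so you need super-polynomial decay of $\tau_n$. That is automatic for \emph{any} Schwartz $\alpha$, not just the Gaussian (which in fact does not have compactly supported Fourier transform as stipulated in \S\ref{Section LLT for smooth}; the paper's Gaussian remark is best read informally). So your proof is both valid and a bit more self-contained, at the cost of using the full Schwartz decay of $\alpha$ rather than the minimal tail hypothesis the paper isolates.
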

\begin{proof}
 Let $n\in \mathbb{N}$ and notice that for every $w\in \mathbb{R}$ with $|w|\leq n^{- \frac{1}{2k}}$ we have
$$\bigcap_{u\in B_0 (n^{- \frac{1}{2k}})} \left( C-u \right)+w \subseteq C.  $$
Let $C_{ (n^{- \frac{1}{2k}}) }:= \bigcap_{u\in B_0 (n^{- \frac{1}{2k}})} \left( C-u \right)$.   Plugging  $\epsilon = n^{- \frac{1}{k}}$ into \eqref{Eq (16.18)} we have
\begin{eqnarray} \label{Eq (16.24)}
\mu_{n,\omega} (C+v)  & \geq & \int_{|w|\leq n^{- \frac{1}{2k}}} \alpha_{n^{- \frac{1}{k}}} (w) \mu_{n,\omega} (C_{ (n^{- \frac{1}{2k}}) } +v+w)d\lambda(w) \\
&\geq & \mu_{n,\omega} \left(\psi_{  n^{- \frac{1}{k}} ,C_{ ( n^{- \frac{1}{2k}} )}+v} \right)- K_n ^1 - K_n ^2 
\end{eqnarray}
where
$$K_n ^1 =  \int_{ n^{- \frac{1}{2k}} \leq |w| \leq n^{ \frac{1}{4}} } \alpha_{n^{- \frac{1}{k}}} (w) \mu_{n,\omega} (C_{ ( n^{- \frac{1}{2k}}) } +v+w)d\lambda(w)$$
$$K_n ^2 =  \int_{  |w| \geq n^{ \frac{1}{4}} } \alpha_{n^{- \frac{1}{k}}} (w) \mu_{n,\omega} (C_{ (n^{- \frac{1}{2k}} )} +v+w)d\lambda(w).$$
First, via the upper bound from Lemma \ref{Lemma upper bound} and the proof of Lemma \ref{Lemma 16.13} we get
\begin{eqnarray*}
\frac{K_n ^1}{G_n (v)} & \leq & \int_{ n^{- \frac{1}{2k}} \leq |w| \leq n^{ \frac{1}{4}} } \alpha_{n^{- \frac{1}{k}}} (w) \frac{G_n (v+w)}{G_n(v)} \left( \lambda (C +v+w) + O\left(\frac{1}{n^{\delta_0}}\right) \right) d\lambda(w)\\
&\leq & \int_{ n^{- \frac{1}{2k}} \leq |w| \leq n^{ \frac{1}{4}} } \alpha_{n^{- \frac{1}{k}}} (w) \left(1+O\left(\frac{1}{n^{\frac{1}{6} }}\right) \right) \cdot  \left( \lambda (C +v+w) + O\left(\frac{1}{n^{\delta_0}}\right) \right) d\lambda(w)\\
&\leq & \left(1+O\left(\frac{1}{n^{ \frac{1}{6} }}\right) \right) \cdot  \left( \lambda (C) + O\left(\frac{1}{n^{\delta_0}}\right) \right) \cdot n^{- \frac{1}{k}}.
\end{eqnarray*}
Secondly, since $|v|\leq \sqrt{R n \log n}$ then as in the second part of the proof of Lemma \ref{Lemma 16.13}
$$ \frac{K_n ^2}{G_n (v)} \leq n^{ \frac{R}{2} }  \int_{  |w| \geq n^{ \frac{1}{4}} } \alpha_{n^{- \frac{1}{k}}} (w) d\lambda(w) =  n^{ \frac{R}{2} }  \int_{  |w| \geq n^{ \frac{1}{4} + \frac{1}{k}} } \alpha (w) d\lambda(w) = O\left( \frac{1}{n^{ R/2 }} \right). $$
Applying successively \eqref{Eq (16.24)}, Theorem \ref{Theorem smooth}, Lemma \ref{Lemma 16.13}  we get
\begin{eqnarray*}
\frac{\mu_{n,\omega} (C+v)}{G_n(v)} & \geq &  \frac{  \mu_{n,\omega} \left(\psi_{  n^{- \frac{1}{k}} ,C_{  (n^{- \frac{1}{2k}})} +v }\right)- K_n ^1 - K_n ^2  }{ G_n (v) } \\
& \geq & \frac{  \lambda( \psi_{  n^{- \frac{1}{k}} ,C_{  (n^{- \frac{1}{2k}})} +v } \cdot G_n ) - O\left(\frac{1}{n^\delta}\right) \cdot \lambda( \psi_{  n^{- \frac{1}{k}} ,C_{  (n^{- \frac{1}{2k}})} +v }\cdot G_n) - n^{ \frac{k}{k} } \cdot  O\left(\frac{1}{n^{\frac{r}{2}}}\right) }{ G_n (v) } \\
&-&   O\left( \frac{1}{n^{ R/2 }} \right)  - O(n^{- \frac{1}{k}}) \\
&\geq& \left(  \frac{ \lambda( \psi_{  n^{- \frac{1}{k}} ,C_{  (n^{- \frac{1}{2k}})} +v } \cdot G_n )} { G_n (v) } - O\left(\frac{1}{n^{\delta}}\right) - O\left(n^{\frac{3+R}{2} } \cdot \frac{1}{n^{\frac{r}{2}}}\right) \right) \\
&-&  O\left( \frac{1}{n^{ R/2 }} \right)  - O(n^{- \frac{1}{k}}) \\
&\geq& \lambda(C) + O\left( \frac{1}{n^{ \delta''} }\right) +O\left(\frac{1}{n^{\delta}}\right) + O\left( n^{ \frac{3+R-r}{2}}\right) -O\left( \frac{1}{n^{ R/2 }} \right)  - O(n^{- \frac{1}{k}}). \\
\end{eqnarray*}
By the choice of $r$ we are done.
\end{proof}
Via Lemma \ref{Lemma lower bound} and Lemma \ref{Lemma upper bound} the proof of Theorem \ref{Theorem LLT} is complete.

\section{An effective conditional local limit Theorem for smooth functions} \label{Section Thm equid}

Let $\Phi$ be an IFS as in Theorem \ref{Main Theorem} that is  either Diophantine or not-conjugate-to-linear. Let $\mathbb{P} = \mathbf{p}^\mathbb{N}$ be a Bernoulli measure on $\mathcal{A}^\mathbb{N}$. In this Section we prove Theorem \ref{Theorem equid}, a  conditional local limit Theorem  which will be the key behind the proof of Theorem \ref{Main Theorem}. This is an effective version of \cite[Theorem 3.7]{algom2020decay}, and  is proved via the effective local limit Theorem \ref{Theorem LLT} and the effective central limit Theorem \ref{Theorem CLT kasun} that we previously discussed.

We first define the following function  on $\mathcal{A}^\mathbb{N}$. Though it resembles one,  it is \textit{not} a stopping time: Recalling \eqref{Eq for Sn}, we let
$$\ \tau_k(\omega):=\min\{n: S_n(\omega)\geq k\chi\}.$$
Note that we allow $k$  to take positive non-integer values. We also recall that $\chi$ is the corresponding Lyapunov exponent. 

Recalling \eqref{Eq C and C prime}, it is clear that for every $k>0$ and $\omega\in A^\mathbb{N}$ we have
$$-\log |f_{\omega|_{\tau_k(\omega)}}'(x_{\sigma^{\tau_k(\omega)}(\omega)})|= S_{\tau_k(\omega)} (\omega) \in [k\chi, k\chi+D'].$$

Next, we introduce some partitions of the space $\mathcal{A}^\mathbb{N}$, that are modelled after  \cite[Definition 3.3]{algom2020decay}:

\begin{Definition} \label{Def R.V. D} 
 Given a finite word $\eta'=(\eta_1 ' ,\cdots, \eta_\ell ') \in \cA^\ell$:
\begin{enumerate}
\item Denote by $A_{\eta'}\subseteq \cA^{\mathbb N}$ the set of infinite words that begin with $\eta'$,
$$A_{\eta'}:= \lbrace \omega: \quad (\omega_1,...,\omega_\ell)= \eta' \}.$$

\item   We define the event
$$A_{k,\eta'}:= \{\omega:\quad \sigma^{\tau_k(\omega)-1}(\omega)\in A_{\eta'}\}.$$


\item Given $k,h'\geq 0$ we denote by $\cA_k^{h'}$ the finite partition of $\cA^{\mathbb N}$ according to the map 
$$\iota_k^{h'} (\omega) =\iota^{h'}\left(\sigma^{\tau_k(\omega)-1}(\omega) \right)$$
where
$$\iota^h (\eta) =\eta|_{ \ttau_{h} (\eta)},\text{ where } \ttau_{h} (\eta) = \min\lbrace n : -\log \max_{x\in I} |f_{\eta|_n} ' (x) | \geq h\cdot \chi \rbrace.$$
\end{enumerate} 
\end{Definition}

 Note that every cell of the partition $\cA_k^{h'}$ is of the form $A_{k,\eta'}$. Given $k,h'\geq 0$ and $\omega \in \cA^{\mathbb{N}}$ we write $\cA_k^{h'} (\omega)$ for the unique $\cA_k^{h'}$ cell that contains $\omega$. For $\mathbb P$-a.e. $\omega$, we denote the conditional measure of $\mathbb{P}$ on the corresponding  cell by $\mathbb{P}_{\cA_k^{h'}(\omega)}$.  Recall that $\lambda$ is the Lebesgue measure on $\mathbb{R}$, and that $X_1$ is defined in \eqref{Eq for X1}.

\begin{Definition} \label{Def Gammak}
Let $k\in \mathbb{N}$ and let $\eta' \in \mathcal{A}^*$ be a finite word. Assuming $\mathbb{P}(A_{k,\eta'})>0$, we define a  probability measure $\Gamma_{A_{k,\eta'}} $ on $[k\chi, k\chi+D']$  by
$$ \Gamma_{A_{k,\eta'}} := \frac{\int_{A_{\eta'}}{\lambda|_{[k\chi,\, k\chi+X_1(\omega')]}}\, d \mathbb P(\omega')}{\int_{A_{\eta'}}{ X_1(\omega')} \, d \mathbb P(\omega')}.$$.
\end{Definition}
The following Lemma is straightforward: 

\begin{Lemma} \cite[Lemma 3.5]{algom2020decay} \label{Lemma Abs. contin of gamma}
If $\mathbb{P}(A_{k,\eta'})>0$ then  $\Gamma_{A_{k,\eta'}}  \ll \lambda_{[k \chi,k\chi+D']}$ with a density  that depends only on $\mathbb{P}$, such that its norm is bounded above by $\frac{1}{D}$ independently of $k$ and $\eta'$
\end{Lemma}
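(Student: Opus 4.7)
The plan is to compute the Radon--Nikodym derivative of $\Gamma_{A_{k,\eta'}}$ with respect to $\lambda|_{[k\chi,k\chi+D']}$ explicitly by a Fubini argument, and then bound it using the fact that $X_1 \in [D,D']$ almost surely on $A_{\eta'}$.

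First I would observe that for any Borel set $E\subseteq[k\chi,k\chi+D']$ the numerator in the definition of $\Gamma_{A_{k,\eta'}}(E)$ can be rewritten, via Fubini applied to $\lambda\otimes\mathbb{P}|_{A_{\eta'}}$, as
$$\int_{A_{\eta'}} \lambda\bigl(E\cap[k\chi,k\chi+X_1(\omega')]\bigr)\,d\mathbb{P}(\omega') = \int_E \rho_{k,\eta'}(t)\,d\lambda(t),$$
where
$$\rho_{k,\eta'}(t) := \mathbb{P}\bigl(\{\omega'\in A_{\eta'}\colon X_1(\omega')\geq t-k\chi\}\bigr),\qquad t\in[k\chi,k\chi+D'].$$
This shows absolute continuity of the numerator with respect to $\lambda|_{[k\chi,k\chi+D']}$. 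Observe that $\rho_{k,\eta'}(t)$ depends on $k$ only through the shift $s=t-k\chi\in[0,D']$, so the density, once translated back to $[0,D']$, is a function of $s$ that is determined entirely by $\mathbb{P}$ and $\eta'$.

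Next I would divide by the denominator $\int_{A_{\eta'}} X_1(\omega')\,d\mathbb{P}(\omega')$ to obtain the density of $\Gamma_{A_{k,\eta'}}$ with respect to $\lambda|_{[k\chi,k\chi+D']}$, namely $\rho_{k,\eta'}(t)\bigl/\int_{A_{\eta'}}X_1\,d\mathbb{P}$. Two elementary bounds then suffice: since $X_1\geq D>0$ pointwise, the denominator satisfies
$$\int_{A_{\eta'}} X_1(\omega')\,d\mathbb{P}(\omega') \;\geq\; D\cdot\mathbb{P}(A_{\eta'}),$$
while trivially $\rho_{k,\eta'}(t)\leq\mathbb{P}(A_{\eta'})$. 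Combining these gives the uniform bound
$$\frac{\rho_{k,\eta'}(t)}{\int_{A_{\eta'}} X_1\,d\mathbb{P}}\;\leq\;\frac{\mathbb{P}(A_{\eta'})}{D\cdot\mathbb{P}(A_{\eta'})}\;=\;\frac{1}{D},$$
which is the required estimate, manifestly independent of both $k$ and $\eta'$.

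There is no real obstacle here: the argument is a direct application of Fubini followed by the two trivial bounds $X_1\geq D$ and $\mathbf{1}_{\{X_1\geq s\}}\leq 1$. The only point requiring minor care is to note that the hypothesis $\mathbb{P}(A_{k,\eta'})>0$ is used only implicitly (to ensure that $\Gamma_{A_{k,\eta'}}$ is well-defined); the bound itself uses only $\mathbb{P}(A_{\eta'})>0$, which follows since $A_{k,\eta'}\subseteq\bigcup_{n\geq 1}\sigma^{-(n-1)}A_{\eta'}$ forces $\mathbb{P}(A_{\eta'})>0$ under the standing assumption that $\mathbf{p}$ is strictly positive.
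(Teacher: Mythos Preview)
Your proposal is correct and is exactly the straightforward Fubini computation the paper has in mind (the paper does not spell out a proof here, merely citing \cite[Lemma~3.5]{algom2020decay} and calling it ``straightforward''). The only cosmetic remark is that the phrase ``depends only on $\mathbb{P}$'' in the statement is really shorthand for ``does not depend on $k$''; your observation that the translated density on $[0,D']$ is determined by $\mathbb{P}$ and $\eta'$ (but not $k$) is the precise content, and the uniform bound $1/D$ is what is actually used downstream.
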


  We now state  an effective version of our previous conditional local limit Theorem \cite[Theorem 3.7]{algom2020decay}. Its effectiveness is what will ultimately allow us to obtain the rate of decay for $\mathcal{F}_q (\nu)$, where $\nu$ is the projection of $\mathbb{P}$ to the fractal (the corresponding self-conformal measure). The idea behind it is to describe some local limit like phenomenon for the random variable $S_{\tau_k}$, and this is achieved subordinate to the partitions $\mathcal{A}_{k} ^{h'}$. We also note that the $h'$ part in $\mathcal{A}_{k} ^{h'}$  is  useful for the linearization argument outlined in Section \ref{Section collecting error terms}.

\begin{theorem}  \label{Theorem equid}
If $\Phi$ satisfies the conditions of Theorem \ref{Main Theorem} and is either Diophantine or not-conjugate-to-linear, then there exists some $\delta_0=\delta_0(\mathbf{p})>0$ such that:

For every $k,h'>0$ there exists a subset $\tA_{k}^{h'}\subseteq \mathcal{A}^\mathbb{N}$ such that:  \begin{enumerate} [label=(\roman*)]
\item $\mathbb P(\tA_{k}^{h'})\geq  1-O \left( \frac{1}{k^{ \frac{1}{4} }} \right) $.
\item for all $\xi\in \tA_{k}^{h'}$, $\mathbb{P}(\cA_k^{h'}(\xi))>0$.
\item  for all $\xi\in \tA_{k}^{h'}$ and for any sub-interval $J\subseteq  [k\chi, k\chi+D']$, 
$$
 \mathbb{P}_{\cA_k^{h'}(\xi)}(S_{\tau_k}\in J)= \Gamma_{\cA_k^{h'}(\xi)} (J)  +O \left( \frac{1}{k^{\delta_0}} \right)
$$
\end{enumerate}
\end{theorem}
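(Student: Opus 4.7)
The plan is to follow the template of \cite[Theorem 3.7]{algom2020decay} while extracting polynomial rates at every step, using the effective CLT (Theorem \ref{Theorem CLT kasun}) to control the fluctuations of $\tau_k$ and the effective LLT (Theorem \ref{Theorem LLT}) to control $S_{n-1}$ conditional on the tail. I would take
$$\tA_k^{h'}:=\bigl\{\xi\in\mathcal{A}^\mathbb{N}:\ |\tau_k(\xi)-k|\leq \sqrt{Rk\log k}\text{ and }\mathbb{P}(\mathcal{A}_k^{h'}(\xi))>0\bigr\}$$
for a constant $R$ to be fixed. Property (ii) is built into the definition and excludes only a $\mathbb{P}$-null set. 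The identity $\{\tau_k>n\}=\{S_n<k\chi\}$, which uses $X_i\geq D>0$, together with Theorem \ref{Theorem CLT kasun} reduces (i) to a two-sided Gaussian tail estimate, and choosing $R$ large enough so that the Gaussian factor is $O(k^{-1/4})$ (the Berry--Esseen remainder being only $O(k^{-1/2})$) gives (i).

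For (iii), fix a cell $A_{k,\eta'}=\mathcal{A}_k^{h'}(\xi)$, where $\eta'=\iota^{h'}(\sigma^{\tau_k(\xi)-1}(\xi))$ is a finite word, and a sub-interval $J\subseteq[k\chi,k\chi+D']$. Since $X_i\geq D>0$, one has $\{\tau_k=n,\,S_n\in J\}=\{S_{n-1}<k\chi,\,S_n\in J\}$, and combined with the independence under $\mathbb{P}$ of $(\omega_1,\ldots,\omega_{n-1})$ from the tail $\sigma^{n-1}(\omega)$,
$$\mathbb{P}(S_{\tau_k}\in J,\ \sigma^{\tau_k-1}\in A_{\eta'})=\sum_n\int_{A_{\eta'}}\mu_{n-1,\omega'}\Bigl(\bigl[J-X_1(\omega')-(n-1)\chi\bigr]\cap(-\infty,k\chi-(n-1)\chi)\Bigr)\,d\mathbb{P}(\omega').$$
On the truncated range $|n-k|\leq\sqrt{Rk\log k}$, the argument of $\mu_{n-1,\omega'}$ lies in the moderate deviation window of Theorem \ref{Theorem LLT}. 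Applying that theorem uniformly in $\omega'$, invoking slow variation of $G_{n-1}$ on scale $O(1)$, and using the elementary identity
$$\lambda\bigl([J-X_1(\omega')]\cap(-\infty,k\chi)\bigr)=\lambda\bigl(J\cap[k\chi,k\chi+X_1(\omega')]\bigr),$$
each term equals $g_n\cdot\lambda\bigl(J\cap[k\chi,k\chi+X_1(\omega')]\bigr)\cdot(1+O(k^{-\delta}))$, with $g_n:=G_{n-1}((k-n+1)\chi)$.

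Summing over $n$ and integrating over $\omega'$, then taking the ratio with the analogous formula for $\mathbb{P}(\sigma^{\tau_k-1}\in A_{\eta'})$ (obtained by substituting $J=[k\chi,k\chi+D']$, which turns the integrand into $X_1(\omega')$), the prefactor $\sum_n g_n$ cancels and what remains is exactly $\Gamma_{A_{k,\eta'}}(J)$ by Definition \ref{Def Gammak}. The multiplicative error $1+O(k^{-\delta})$ produces an additive error of the same order since $\Gamma_{A_{k,\eta'}}(J)\leq 1$, yielding (iii) with $\delta_0=\min(\delta,1/4)$.

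The main technical obstacle is keeping the error bookkeeping uniform across $\omega'$ and the partition cell. Specifically one must ensure: (a) the truncation to $|n-k|\leq \sqrt{Rk\log k}$ contributes a tail which, even after division by the possibly small $\mathbb{P}(A_{k,\eta'})$, remains of order $O(k^{-\delta_0})$; this combines Theorem \ref{Theorem CLT kasun} with a matching lower bound $\mathbb{P}(A_{k,\eta'})\geq c\cdot g_k\cdot \int_{A_{\eta'}} X_1\,d\mathbb{P}$ coming from the very same decomposition. One must also verify (b) that the Theorem \ref{Theorem LLT} error $O_{\lambda(C)}(n^{-\delta})$ is uniform in $\omega'$ and in the (possibly short) sub-interval $C\subseteq[-D',D']$; and (c) that replacing $G_{n-1}$ by the constant $g_n$ across shifts of size $O(1)$ costs only a relative $O(1/k)$ via a direct Taylor expansion. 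Each is routine given the effective tools already established.
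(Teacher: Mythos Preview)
Your overall plan matches the paper's: decompose $\mathbb{P}_{\cA_k^{h'}(\xi)}(S_{\tau_k}\in J)$ according to the value of $\tau_k$, use the effective CLT to discard the range $\tau_k-1\notin I_k=[k-\sqrt{k\log k},\,k+\sqrt{k\log k}]$, and use the effective LLT on the main range $\tau_k-1\in I_k$ to produce $\Gamma_{\cA_k^{h'}(\xi)}(J)$. Your main-term computation (the middle two paragraphs) is essentially the paper's Proposition~\ref{Prop equid local}.

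There is, however, a genuine gap in your choice of $\tA_k^{h'}$ and in point (a). Your set $\tA_k^{h'}$ constrains only the pointwise value $\tau_k(\xi)$; since a partition cell $A_{k,\eta'}$ contains points with many different values of $\tau_k$, this gives no control on the \emph{conditional} tail $\mathbb{P}_{\cA_k^{h'}(\xi)}(\tau_k-1\notin I_k)$, which is what (iii) actually needs. In (a) you propose to bound the truncation tail unconditionally by Theorem~\ref{Theorem CLT kasun} and then divide by your lower bound on $\mathbb{P}(A_{k,\eta'})$. But $\mathbb{P}(\tau_k-1\notin I_k)=O(k^{-1/2})$ carries no factor of $\mathbb{P}(A_{\eta'})$, whereas $\mathbb{P}(A_{k,\eta'})\geq c\,g_k\int_{A_{\eta'}}X_1\,d\mathbb{P}$ does, and $\mathbb{P}(A_{\eta'})$ is exponentially small in $h'$ with no relation to $k$; the resulting ratio is not $O(k^{-\delta_0})$ uniformly over cells. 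A cell-adapted tail bound of the right form would require $\sup_{\omega'}\sum_{m\notin I_k}\mu_{m,\omega'}([k\chi-D',k\chi))=O(k^{-\delta_0})$, but for $m$ far from $k$ this lies outside the moderate-deviation window of Theorem~\ref{Theorem LLT} and is not ``routine'' from the stated tools.

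The paper sidesteps this entirely by \emph{defining} $\tA_k^{h'}$ via Markov's inequality (its Proposition~\ref{Prop equid central}): from the unconditional bound $\mathbb{P}(\tau_k-1\notin I_k)=O(k^{-1/2})$ one takes $\tA_k^{h'}$ to be the set of $\xi$ with $\mathbb{P}_{\cA_k^{h'}(\xi)}(\tau_k-1\notin I_k)\leq k^{-1/4}$. Markov then gives (i), and for $\xi\in\tA_k^{h'}$ the tail contribution to (iii) is at most $k^{-1/4}$ by construction, with no division by $\mathbb{P}(A_{k,\eta'})$ required. With this single change to the definition of $\tA_k^{h'}$, the rest of your argument goes through.
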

All big-$O$ terms should be understood to depend on $\mathbf{p}$. There are two main differences between Theorem \ref{Theorem equid} and \cite[Theorem 3.7]{algom2020decay}: The most substantial one is that  the error terms are explicit and  polynomial in $k$. The second one is that in \cite[Theorem 3.7]{algom2020decay} we work inside cylinders to get pointwise normality (which requires more parameters), but here we only care about Fourier decay which allows us to make the statement simpler.

The proof of Theorem \ref{Theorem equid} is similar to that of Theorem \cite[Theorem 3.7]{algom2020decay}. Let us now  explain how the quantitative estimates we previously obtained can be used to make \cite[Proof of Theorem 3.7]{algom2020decay} effective:

For every $k$ we define the interval 
\begin{equation} \label{Eq Def of Ik}
I_k = [k- \sqrt{k\log k}, k+\sqrt{k\log k}].
\end{equation}
The proof of Theorem \ref{Theorem equid} relies on a decomposition of the left hand side in (iii) as 
\begin{equation}\label{Eq Theorem equid}
 \mathbb{P}_{\cA_k^{h'}(\xi)} \left(  S_{\tau_k} \in J  \right)= \sum_{m \not \in I_k}  \mathbb{P}_{\cA_k^{h'}(\xi)} \left( S_{\tau_k} \in J, \,  \tau_k = m+1 \right)+\sum_{m \in I_k} \mathbb{P}_{\cA_k^{h'}(\xi)} \left( S_{\tau_k} \in J, \,  \tau_k = m+1 \right)\end{equation}

Both terms are respectively treated by  Proposition \ref{Prop equid central} and Proposition \ref{Prop equid local} below, and the Theorem follows. First, we have:
\begin{Proposition} \label{Prop equid central}
There exists a set $\widetilde A$ such that claims (i) and (ii) hold and for all $\xi\in\widetilde A$, 
$$\mathbb{P}_{\cA_k^{h'}(\xi)}( \tau_k -1 \notin I_k ) = O \left( \frac{1}{k^{\frac{1}{4}}} \right).$$
\end{Proposition}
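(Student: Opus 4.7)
The plan is to first establish the unconditional estimate $\mathbb{P}(\tau_k - 1 \notin I_k) = O(k^{-1/2})$ using the effective central limit theorem (Theorem \ref{Theorem CLT kasun}), and then pass to the conditional statement by a Markov-type argument over the cells of the partition $\mathcal{A}_k^{h'}$.

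For the unconditional step, the key observation is that by \eqref{Eq C and C prime} each increment satisfies $X_i \geq D > 0$, so $n \mapsto S_n(\omega)$ is strictly increasing along every trajectory. Hence $\{\tau_k \leq m\} = \{S_m \geq k\chi\}$, and the event $E := \{\tau_k - 1 \notin I_k\}$ decomposes as
\begin{equation*}
\{S_{m_-} \geq k\chi\} \cup \{S_{m_+} < k\chi\},\qquad m_\pm := k \pm \sqrt{k \log k} + O(1).
\end{equation*}
Standardizing, each event is $(S_m - m\chi)/\sqrt{m}$ deviating from $0$ by an amount of order $\sqrt{\log k}$. Combining the Berry--Esseen rate from Theorem \ref{Theorem CLT kasun} with the Gaussian tail bound $\mathbb{P}(|N(0, r_0^2)| \geq z) \leq \exp(-z^2/(2r_0^2))$ produces a polynomial bound $\mathbb{P}(E) = O(k^{-1/2})$; since at the moderate deviation scale $\sqrt{\log k}$ the Gaussian tail is polynomially small, this comes down to the $O(1/\sqrt{m})$ Berry--Esseen error from Theorem \ref{Theorem CLT kasun}.

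For the conditional step, define
\begin{equation*}
\widetilde{A} := \bigcup\bigl\{A \in \mathcal{A}_k^{h'} : \mathbb{P}(A) > 0,\ \mathbb{P}_A(E) \leq k^{-1/4}\bigr\}.
\end{equation*}
Claim (ii) and the displayed bound on $\mathbb{P}_{\mathcal{A}_k^{h'}(\xi)}(\tau_k - 1 \notin I_k)$ hold for all $\xi \in \widetilde{A}$ by construction. For (i), disintegrating $\mathbb{P}$ along $\mathcal{A}_k^{h'}$ gives
\begin{equation*}
\mathbb{P}(E) = \sum_A \mathbb{P}(A) \mathbb{P}_A(E) \geq k^{-1/4} \mathbb{P}(\widetilde{A}^c),
\end{equation*}
so $\mathbb{P}(\widetilde{A}^c) \leq k^{1/4} \mathbb{P}(E) = O(k^{-1/4})$. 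The main technical ingredient is Theorem \ref{Theorem CLT kasun} applied on the moderate deviation scale $\sqrt{k \log k}$; the Markov step then automatically trades a square root of the unconditional rate to yield the announced $k^{-1/4}$. The only genuine subtlety is a book-keeping one: the disintegration is only meaningful over positive-measure cells, but these account for a full-measure subset, so the cells of measure zero can be absorbed into $\widetilde{A}^c$ without affecting the bound.
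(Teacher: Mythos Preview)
Your proposal is correct and follows essentially the same route as the paper: first establish the unconditional estimate $\mathbb{P}(\tau_k-1\notin I_k)=O(k^{-1/2})$ by reducing, via monotonicity of $S_n$, to two tail events controlled by the Berry--Esseen bound of Theorem~\ref{Theorem CLT kasun}, and then pass to the conditional statement by a Markov-type selection of good cells in $\mathcal{A}_k^{h'}$. The paper sketches exactly this, pointing to \cite[Eq.~(15) and Proposition~3.12]{algom2020decay} for the details you have written out.
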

Notice that we are using the abbreviated notation $\widetilde A$ instead of $\tA_{k}^{h'}$. Proposition \ref{Prop equid local} is an effective version of \cite[Proposition 3.12]{algom2020decay}. The key to the proof is showing that
\begin{equation}\label{EqTotalCLT}\mathbb{P}(\tau_k -1 \notin I_k) =  O \left( \frac{1}{\sqrt{k}}\right).\end{equation}
Indeed, once \eqref{EqTotalCLT} is established, the result follows by an application of Markov's inequality, similarly to the end of the proof of  \cite[Proposition 3.12]{algom2020decay}. The proof of \eqref{EqTotalCLT} is rather straightforward and is essentially the same as the proof of \cite[Eq. (15)]{algom2020decay}. The latter proof can now be made effective by replacing the use of the non-effective central limit Theorem \cite[Theorem 12.1]{Benoist2016Quint} with the effective Theorem \ref{Theorem CLT kasun}.

The second term in \eqref{Eq Theorem equid} is treated in the following Proposition:

\begin{Proposition}\label{Prop equid local}
Under the assumptions of Theorem \ref{Theorem equid}, there is some $\delta_0>0$ such that for all $\xi$ in the set $\widetilde A$ from Proposition \ref{Prop equid central},
  $$\sum_{m \in I_k} \mathbb{P}_{\cA_k^{h'}(\xi)} \left(  S_{\tau_k} \in J,\, \tau_k = m+1 \right)=\Gamma_{\cA_k^{h'}(\xi)}(J)  +O\left( \frac{1}{k^{\delta_0}} \right).$$
  \end{Proposition}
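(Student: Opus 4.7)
Write $A_{k,\eta'}=\cA_k^{h'}(\xi)$, where $\eta'$ is the finite word labelling the cell. The plan is to decompose $\omega = \zeta \cdot \omega'$ with $\zeta\in\cA^m$ and $\omega'\in\cA^{\mathbb N}$, apply the effective local limit theorem (Theorem \ref{Theorem LLT}) to the prefix of length $m$, sum the resulting Gaussian kernel over $m\in I_k$, and finally use the same estimate with $J$ replaced by $[k\chi,k\chi+D']$ to produce a renewal estimate for $\mathbb{P}(A_{k,\eta'})$ needed for normalisation. The event $A_{k,\eta'}\cap\{\tau_k=m+1,\,S_{\tau_k}\in J\}$ becomes $\{\omega'\in A_{\eta'}\}\cap\{S_m(\zeta\cdot\omega')\in J_{\omega',k}\}$, where
$$J_{\omega',k}:=(J-X_1(\omega'))\cap(-\infty,k\chi)\subseteq[k\chi-D',k\chi]$$
has length $\lambda(J\cap[k\chi,k\chi+X_1(\omega')])\le D'$. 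By the definitions of $\mu_{m,\omega'}$ and $S_m$,
$$\mathbb{P}(A_{k,\eta'},\tau_k=m+1,S_{\tau_k}\in J)=\int_{A_{\eta'}}\mu_{m,\omega'}\bigl(J_{\omega',k}-m\chi\bigr)\,d\mathbb{P}(\omega').$$
With the shift $v_m=(k-m)\chi$ (so $C:=J_{\omega',k}-k\chi\subseteq[-D',0]$ is uniformly bounded), for $m\in I_k$ we have $|v_m|\le\chi\sqrt{k\log k}\le\sqrt{Rm\log m}$ for any $R>2\chi^2$, so Theorem \ref{Theorem LLT} gives, for some $\delta>0$ uniformly in $\omega'\in A_{\eta'}$,
$$\mu_{m,\omega'}(J_{\omega',k}-m\chi)=G_m((k-m)\chi)\bigl(\lambda(J\cap[k\chi,k\chi+X_1(\omega')])+O(m^{-\delta})\bigr).$$

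Next, summing over $m\in I_k$, the Gaussian sum $\sum_{m\in I_k}G_m((k-m)\chi)$ is a Riemann-type approximation to $\int G_t((k-t)\chi)\,dt$, which equals a positive constant $\kappa=\kappa(\chi,r_0)$. Since the map $m\mapsto G_m((k-m)\chi)$ is smooth on scale $\sqrt k$ and its values at the boundary of $I_k$ decay super-polynomially (Gaussian tail), Euler--Maclaurin type bounds give $\sum_{m\in I_k}G_m((k-m)\chi)=\kappa+O(k^{-\gamma})$ for any $\gamma>0$; meanwhile the $O(m^{-\delta})$ error from Theorem \ref{Theorem LLT}, weighted by the $G_m$'s whose total mass is $O(1)$, contributes $O(k^{-\delta})$. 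Altogether,
$$\sum_{m\in I_k}\mathbb{P}(A_{k,\eta'},\tau_k=m+1,S_{\tau_k}\in J)=\kappa\int_{A_{\eta'}}\lambda(J\cap[k\chi,k\chi+X_1])\,d\mathbb{P}+O(k^{-\delta_1})$$
for some $\delta_1>0$. Running the same computation with $J$ replaced by $J_0:=[k\chi,k\chi+D']$ gives $\lambda(J_0\cap[k\chi,k\chi+X_1(\omega')])=X_1(\omega')$, while the left-hand side now equals $\mathbb{P}(A_{k,\eta'},\tau_k-1\in I_k)$, which by Proposition \ref{Prop equid central} applied at $\xi\in\widetilde A$ equals $\mathbb{P}(A_{k,\eta'})\bigl(1-O(k^{-1/4})\bigr)$. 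Hence the renewal estimate
$$\mathbb{P}(A_{k,\eta'})=\kappa\int_{A_{\eta'}}X_1\,d\mathbb{P}+O(k^{-\delta_2})$$
holds for some $\delta_2>0$. Since $X_1\ge D>0$, $\int_{A_{\eta'}}X_1\,d\mathbb{P}\ge D\cdot\mathbb{P}(A_{\eta'})$ is bounded below by a positive constant depending only on $\mathbf p$ and $h'$ (as the length of $\eta'$ is controlled by $\ttau_{h'}$ independently of $k$), so division is harmless; dividing the two displays and comparing with Definition \ref{Def Gammak} yields the proposition with $\delta_0=\min(\delta_1,\delta_2)$.

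The main technical obstacle is ensuring that all error terms combine polynomially. The delicate points are: (a) the $O(m^{-\delta})$ error in Theorem \ref{Theorem LLT} must have a benign dependence on $\lambda(C)$, which is ensured since $\lambda(J_{\omega',k})\le D'$ is uniformly bounded; (b) the Gaussian Riemann-sum error must come with an explicit polynomial (in fact super-polynomial) rate, which follows from standard smoothness estimates once one notes that $G_m$ varies on scale $\sqrt k$ and has super-polynomial tails outside $I_k$; and (c) $\mathbb{P}(A_{k,\eta'})$ must be bounded below uniformly in $k$ so that dividing does not blow up the error, which holds because the length of $\eta'$ is controlled by $h'$ alone.
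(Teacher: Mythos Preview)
Your approach is essentially the paper's: rewrite each summand as an integral over $A_{\eta'}$, apply Theorem \ref{Theorem LLT} to the length-$m$ prefix, sum the Gaussian weights over $m\in I_k$ to a constant, and normalise by running the same computation with $J$ replaced by $[k\chi,k\chi+D']$. The paper inserts one extra step (invoking \cite[Lemma 3.14]{algom2020decay} to replace $G_m$ by $G_k$ before summing), but this is cosmetic.

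There is, however, a genuine gap in your normalisation. The implicit constant in the $O(k^{-\delta_0})$ must be uniform in $h'$: the paper states all big-$O$ terms depend only on $\mathbf p$, and in the application (Section \ref{Section proof}) one takes $h'=\sqrt k\to\infty$. Your point (c) only secures $\mathbb{P}(A_{\eta'})\geq c(h')>0$, so after dividing your additive error $O(k^{-\delta_2})$ by $\mathbb{P}(A_{k,\eta'})$ you get a constant that blows up with $h'$. The repair is to notice that your additive errors already carry a factor $\mathbb{P}(A_{\eta'})$: the $O(m^{-\delta})$ from Theorem \ref{Theorem LLT} is integrated over $A_{\eta'}$, and the Riemann-sum error multiplies $\int_{A_{\eta'}}\lambda(\cdot)\,d\mathbb{P}\le D'\,\mathbb{P}(A_{\eta'})$. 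Thus the numerator is $\kappa\int_{A_{\eta'}}\lambda(J\cap[k\chi,k\chi+X_1])\,d\mathbb{P}+O(k^{-\delta_1})\,\mathbb{P}(A_{\eta'})$ and the renewal identity becomes $\mathbb{P}(A_{k,\eta'})=\kappa\int_{A_{\eta'}}X_1\,d\mathbb{P}\cdot(1+O(k^{-\delta_2}))$; since $D\,\mathbb{P}(A_{\eta'})\le\int_{A_{\eta'}}X_1\,d\mathbb{P}\le D'\,\mathbb{P}(A_{\eta'})$, division now yields $\Gamma_{A_{k,\eta'}}(J)+O(k^{-\delta_0})$ with a constant independent of $h'$. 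A minor side remark: your claim that the Riemann sum converges at rate $O(k^{-\gamma})$ for \emph{every} $\gamma>0$ is too optimistic (the Gaussian value at the boundary of $I_k$ is only polynomially small in $k$), but some fixed $\gamma>0$ is all that is needed.
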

This is an effective version of \cite[Proposition 3.13]{algom2020decay}.  Let us  explain how, via Theorem \ref{Theorem LLT},  \cite[Proof of Proposition 3.13]{algom2020decay} can be made effective.  Let $\eta'$ be the finite word such that ${\cA_k^{h'}(\xi)}=A_{k,\eta'}$. Write
\begin{equation*} 
\sum_{m \in I_k} \mathbb{P}_{\cA_k^{h'}(\xi)} \left( S_{\tau_k} \in J, \, \tau_k = m+1 \right)  = \frac{\sum_{m \in I_k}\mathbb{P} \left( S_{\tau_k} \in J, \, \tau_k = m+1,\, \omega \in A_{k,\eta'} \right) }{ \mathbb{P}(A_{k,\eta'})}.
\end{equation*}

As shown in \cite[Proof of Proposition 3.13]{algom2020decay},  each summand in the numerator can be written as 
$$\mathbb{P} \left( S_{\tau_k} \in J, \, \tau_k = m+1,\, \omega \in A_{k,\eta'} \right) = \int_{A_{\eta'}}\mathbb{P}_{\sigma^{-m}(\{\omega'\})}\big( S_m \in J^{\omega'} \big) \, d\mathbb P (\omega'),$$
where the interval $J^{\omega'}$ is defined by
$$J^{\omega'} := [k\chi - X_1(\omega'),\, k\chi) \cap \left( J - X_1(\omega') \right).$$
Note that $J^{\omega'}$ also depends on $k$, but we suppress this in our notation. Let $a_{\omega',k}$ be the left endpoint of $J^{\omega'}$. We now slightly change our notation:
$$ \text{ We write } G_s (\cdot ) \text{ for the density of the Gaussian random variable } N(0,s^2).$$ 
By Theorem \ref{Theorem LLT}, we have for every $\omega'$, using that $m\in I_k$ so  $|a_{\omega',k} - m\chi| \leq 8 \chi \sqrt{m \log m}$,
\begin{equation*}
\mathbb{P}_{\sigma^{-m}(\{\omega'\})}\big( S_m \in J^{\omega'})=G_{\sqrt m r}(a_{\omega',k}-m\chi)\cdot    \left( \lambda(J^{\omega'}) +O \left( \frac{1}{m^\delta} \right) \right).
\end{equation*} 
Using that $m\in I_k$ and  applying \cite[Lemma 3.14]{algom2020decay}, there is some $\delta'$ such that 
\begin{equation} \label{EqLLTComponent}
\mathbb{P}_{\sigma^{-m}(\{\omega'\})}\big( S_m \in J^{\omega'}) = G_{\sqrt k r}((m-k+\beta)\chi) \cdot  \left( \lambda(J^{\omega'}) +   O( \frac{1}{k^{\delta'}}) \right)
\end{equation}
 for all $\omega'\in A_{\eta'}$, $m\in I_k$ and $\beta\in[0,1)$ as $k\to\infty$. 
 
 From here, one simply swaps \cite[Equation (20)]{algom2020decay} for \eqref{EqLLTComponent}, and essentially the same proof given for \cite[Proposition 3.13]{algom2020decay}) yields Proposition \ref{Prop equid local}.

\section{Proof of Theorem \ref{Main Theorem}} \label{Section proof}
In this Section we prove Theorem \ref{Main Theorem}. Fix an IFS $\Phi$ as in Theorem \ref{Main Theorem} that is either Diophantine or not-conjugate-to-linear, and let $\mathbb{P} = \mathbf{p}^\mathbb{N}$ be a Bernoulli measure on $\mathcal{A}^\mathbb{N}$.  As in \cite{algom2020decay}, we first require a preliminary step -  an adaptation of Theorem \ref{Theorem equid} to Fourier modes. This is the content of the following Section.
\subsection{Application of Theorem \ref{Theorem equid} to Fourier modes}
Fix a Borel probability measure $\rho\in \mathcal{P}(\mathbb{R})$.  For every $q\in \mathbb{R}$ we define a function $g_{q,\rho}:\mathbb{R}\rightarrow \mathbb{R}$ via
$$g_{q,\rho} (t) = \left| \mathcal{F}_q \left( M_{e^{-t}} \rho \right) \right|^2, \text{ where for any } s,x\in \mathbb{R}, \,   M_s(x):=s\cdot x.$$
The following is a  version of Theorem \ref{Theorem equid} for Fourier modes instead of intervals:
\begin{theorem} \label{Theorem equid 2}
Fix parameters $q,k,h'$ with $k,h',|q|>0$, and let $\rho\in \mathcal{P}(\mathbb{R})$ be a measure such that 
$$\diam \left( \supp \left( \rho \right) \right) = O(e^{-h'\chi}).$$
Then for every $\xi \in \tA_{k}^{h'} \subseteq \mathcal{A}^\mathbb{N}$, for $\delta_0>0$ as in Theorem \ref{Theorem equid},   we have
\begin{equation*}
\left| \mathbb{E}_{\cA_k^{h'}(\xi)} \left[ g_{q,\rho} (S_{\tau_k(\omega)}) \right] - \int_{k\chi} ^{k\chi +D'} g_{q,\rho}(x) d \Gamma_{\cA_k^{h'}(\xi)} (x) \right|\leq O \left( \frac{2}{q e^{-(k+h')\chi}}+ (q e^{-(k+h')\chi})^2 \frac{1}{k^{\delta_0}} \right).
\end{equation*}
\end{theorem}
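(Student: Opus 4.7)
The starting observation is that the function $g_{q,\rho}(t)$ admits a clean representation as a Fourier integral in $t$. Using $|\mathcal{F}_q(\sigma)|^2 = \iint e^{2\pi i q(y_1-y_2)}\,d\sigma(y_1)\,d\sigma(y_2)$ with $\sigma = M_{e^{-t}}\rho$ and introducing $\tilde\rho$, the law of the difference $W_1 - W_2$ of two i.i.d.\ copies of $\rho$, we obtain
\begin{equation*}
g_{q,\rho}(t) = \int_{-R}^{R} e^{2\pi i q z e^{-t}}\,d\tilde\rho(z), \qquad R := \diam(\supp\rho) = O(e^{-h'\chi}).
\end{equation*}
Thus $g_{q,\rho}$ is a superposition of pure Fourier modes $t \mapsto e^{2\pi i q z e^{-t}}$, each of modulus $1$ and of effective frequency $|qz|e^{-t}$ controlled by $Q := qe^{-(k+h')\chi}$ on $[k\chi, k\chi+D']$. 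By Fubini, the theorem reduces to estimating, uniformly in $z \in [-R,R]$,
\begin{equation*}
\Delta(z) := \mathbb{E}_{\cA_k^{h'}(\xi)}\!\left[e^{2\pi i q z e^{-S_{\tau_k}}}\right] - \int_{k\chi}^{k\chi+D'} e^{2\pi i q z e^{-t}}\,d\Gamma_{\cA_k^{h'}(\xi)}(t),
\end{equation*}
and then integrating $|\Delta(z)|$ against $d\tilde\rho(z)$.

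For each fixed $z$, my plan is to control $\Delta(z)$ by Riemann--Stieltjes integration by parts. Let $F(t) := \mathbb{P}_{\cA_k^{h'}(\xi)}(S_{\tau_k} \le t)$ and $G(t) := \Gamma_{\cA_k^{h'}(\xi)}((-\infty,t])$; both are cumulative distribution functions supported on $[k\chi, k\chi+D']$, and by Theorem~\ref{Theorem equid}(iii) we have $\sup_t |F(t) - G(t)| = O(1/k^{\delta_0})$. With $h_z(t) := e^{2\pi i q z e^{-t}}$, integration by parts gives
\begin{equation*}
\Delta(z) = \bigl[h_z(t)(F-G)(t)\bigr]_{k\chi}^{k\chi+D'} - \int_{k\chi}^{k\chi+D'} h_z'(t)(F-G)(t)\,dt.
\end{equation*}
At $t=k\chi+D'$ both $F$ and $G$ equal $1$, so the right endpoint contributes $0$; at $t=k\chi$ the boundary term is $O(1/k^{\delta_0})$ after bounding the possible atom $F(k\chi) = \mathbb{P}_{\cA_k^{h'}(\xi)}(S_{\tau_k}=k\chi)$ by Theorem~\ref{Theorem equid} applied to the degenerate interval $\{k\chi\}$. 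Since $|h_z'(t)| = 2\pi|qz|e^{-t} \le 2\pi|qz|e^{-k\chi}$, the integral contributes $O(|qz|e^{-k\chi}/k^{\delta_0})$, so $|\Delta(z)| = O\!\bigl((1+|qz|e^{-k\chi})/k^{\delta_0}\bigr)$. Integrating against $\tilde\rho$ and using $\int |z|\,d\tilde\rho(z) \le R = O(e^{-h'\chi})$ yields the main estimate
\begin{equation*}
\left| \mathbb{E}_{\cA_k^{h'}(\xi)}[g_{q,\rho}(S_{\tau_k})] - \int g_{q,\rho}\,d\Gamma_{\cA_k^{h'}(\xi)} \right| = O\!\left(\frac{1+Q}{k^{\delta_0}}\right).
\end{equation*}

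To recast this in the exact form stated in the theorem, I would combine the bound above with the trivial estimate $|\mathbb{E}[g_{q,\rho}]-\int g_{q,\rho}\,d\Gamma| \le 2$, which follows from $\|g_{q,\rho}\|_\infty \le 1$. When $Q \le 1$, the trivial bound gives $2 \le 2/Q$; when $Q \ge 1$, we have $(1+Q)/k^{\delta_0} \le 2Q^2/k^{\delta_0}$. In either regime the estimate is absorbed by $O(2/Q + Q^2/k^{\delta_0})$. The main technical obstacle I anticipate is the boundary term at $t=k\chi$: because $\Gamma_{\cA_k^{h'}(\xi)}$ is absolutely continuous by Lemma~\ref{Lemma Abs. contin of gamma} while $\tau_k$ is a first-passage index (so $S_{\tau_k}$ can equal $k\chi$ exactly with positive probability), one must carefully extract from Theorem~\ref{Theorem equid} the bound $\mathbb{P}_{\cA_k^{h'}(\xi)}(S_{\tau_k}=k\chi)=O(1/k^{\delta_0})$, either by applying it to the degenerate interval $\{k\chi\}$ or by passing to the limit over shrinking intervals $[k\chi,k\chi+\epsilon]$.
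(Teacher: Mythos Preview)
Your proof is correct and takes a genuinely different route from the paper's. The paper argues by step-function approximation: since $g_{q,\rho}$ has Lipschitz constant $O(Q)$ on $[k\chi,k\chi+D']$ (with $Q=|q|e^{-(k+h')\chi}$), one partitions the interval into $N\approx Q^2$ equal pieces, replaces $g_{q,\rho}$ by a step function (incurring sup-norm error $O(1/Q)$ in both integrals), and applies Theorem~\ref{Theorem equid}(iii) to each of the $N$ sub-intervals, picking up error $O(N/k^{\delta_0})=O(Q^2/k^{\delta_0})$. Your integration-by-parts argument instead exploits the Kolmogorov-distance bound $\sup_t|F(t)-G(t)|=O(1/k^{\delta_0})$ (which follows directly from Theorem~\ref{Theorem equid} with $J=[k\chi,t]$) together with the derivative bound, and produces the sharper intermediate estimate $O((1+Q)/k^{\delta_0})$ before you weaken it to match the stated form. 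Two minor remarks: first, the Fourier decomposition via $\tilde\rho$ is not needed---you can integrate by parts directly against $g_{q,\rho}$, whose derivative already satisfies $|g_{q,\rho}'(t)|\le 4\pi|q|e^{-t}\cdot R=O(Q)$; second, the boundary-term worry dissolves on inspection, since for two probability measures supported on $[a,b]$ one has $\int h\,d(\mu_1-\mu_2)=-\int_a^b h'(s)(F_1-F_2)(s)\,ds$ with no boundary contribution (both CDFs vanish at $a^-$ and equal $1$ at $b$), and in any case $|F(k\chi)-G(k\chi)|=O(1/k^{\delta_0})$ is immediate from Theorem~\ref{Theorem equid} applied to $J=[k\chi,k\chi+\epsilon]$ and letting $\epsilon\downarrow 0$.
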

This  is an analogue of \cite[Theorem 4.1]{algom2020decay}. The main difference is that we swap the $o_k$ term in \cite[Theorem 4.1]{algom2020decay}, which is the non-effective rate at which \cite[Theorem 3.7]{algom2020decay} holds, for a more explicit bound in terms of our parameters and the effective rate at which Theorem \ref{Theorem equid} holds. To sketch the proof, note that the Lipschitz norm of the function
$$t\in [ k\chi, k\chi +D'] \mapsto g_{q,\rho} (t)$$
is $4 \pi q e^{-\chi k}\cdot \text{diam} \left( \supp(\rho) \right)$. This allows for the construction of a $O\left( \frac{1}{q e^{-(k+h')\chi}} \right)$-approximating step-function on this interval in the sup-norm, with $(q e^{-(k+h')\chi})^2 $-steps. Each step corresponds to an indicator function of some sub-interval of $[ k\chi, k\chi +D']$,  where Theorem \ref{Theorem equid} holds with a uniform rate of $ O\left( k ^{-\delta_0} \right)$. This implies the Theorem. For a detailed outline of this sketch, see \cite[Proof of Theorem 4.1]{algom2020decay}.

\subsection{Collecting error terms} \label{Section collecting error terms}
 Let $\nu$ be the self-conformal measure that arises by projecting $\mathbb{P}$ to the fractal $K$. Fix parameters $q,k,h'$ where $q$ will be the frequency of the Fourier transform of $\nu$, and $k,h'$ positive numbers that will depend on $q$. In this Section we will bound $\mathcal{F}_q (\nu)$ via a sum of certain error terms that depend variously on $|q|,k,h'$.   These error terms will arise from three main sources: Linearization, the local limit Theorem, and an oscillatory integral. This is  analogues to  \cite[Section 4.2]{algom2020decay}, so we exclude some of  the proofs (but we will indicate exactly what we are using from  \cite{algom2020decay}).  For brevity,  let $\tilde{A}$ be the set $\tA_{k}^{h'}$ as in Theorem \ref{Theorem equid} for our parameters. The most technically involved estimate arises from a linerization scheme, whose outcome is summarized in the following Theorem. Here, and throughout this Section, all big-$O$ terms should be understood to depend on $\mathbb{P}$ and $\Phi$. For every $s,x\in \mathbb{R}$ we denote the scaling map by $M_s(x):=s\cdot x$, and recall that $\Phi$ is $C^{r}$ smooth, $r\geq 2$.
\begin{theorem} (Linearization) \label{Theorem lin}
There is some integer $P>1$ such that for any $\beta \in (0,1)$,
$$\left| \mathcal{F}_q (\nu) \right|^2 \leq \sum_{|\rho|\leq 2P} \int_{\xi \in \tilde{A}}  \mathbb{E}_{\cA_k^{h'}(\xi)}  \left| \mathcal{F}_q \left( M_{e^{-S_{\tau_k(\omega)} (\omega)}}\circ  f_{\bar{\eta'}} \circ f_\rho  \nu \right) \right|^2 \, d \mathbb{P}( \xi) $$
$$+ O\left( \frac{1}{k^{ \frac{1}{4}} } \right) + O\left(|q|\cdot e^{-(k+h') \chi-\beta\cdot h' \chi}\right),$$
where:
\begin{enumerate}
\item For every $\xi \in \tilde{A}$ the $\eta'$ inside the integral corresponds to  the cell $\cA_k^{h'}(\xi) = A_{k,\eta'}$.

\item For every $\eta'$ we define  $\bar{\eta'} := \eta'|_{|\eta'|-P}$, the prefix of $\eta'$ of length $|\eta'|-P$.

\item  There is a global constant $C'>1$ such that for all $\bar{\eta'}$ and $\rho$ as above,
$$|\left( f_{\bar{\eta'}} \circ f_{\rho} \right) ' (x)| = \Theta_{C'} \left( e^{-h'\chi} \right), \quad \forall x\in I.$$
\end{enumerate}
\end{theorem}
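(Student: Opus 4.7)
My plan is to adapt the scheme of \cite[Section 4.2]{algom2020decay} to our effective setting. The first move is an iteration of the self-conformal identity $\nu = \sum_a p_a f_a\nu$: for the $\omega$-dependent length $m(\omega)$ determined by the cell $\cA_k^{h'}(\omega) = A_{k,\eta'(\omega)}$ (a length slightly beyond $\tau_k(\omega) + |\eta'(\omega)| - 1$), we have $\nu = \int f_{\omega|_{m(\omega)}}\nu\, d\mathbb{P}(\omega)$. An application of Cauchy--Schwarz yields $|\mathcal{F}_q(\nu)|^2 \leq \int|\mathcal{F}_q(f_{\omega|_{m(\omega)}}\nu)|^2\, d\mathbb{P}(\omega)$, and the contribution from $\mathcal{A}^\mathbb{N}\setminus\tilde A$, bounded by $\mathbb{P}(\mathcal{A}^\mathbb{N}\setminus\tilde A) = O(1/k^{1/4})$ via Theorem \ref{Theorem equid}(i), gives the first error term.

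The core step is a cell-wise linearization. Fix $\xi \in \tilde A$ and condition on the cell $\cA_k^{h'}(\xi) = A_{k,\eta'}$. On this cell one obtains the factorization $f_{\omega|_{m(\omega)}} = f_{\omega|_{\tau_k(\omega)}} \circ f_{\bar{\eta'}} \circ f_\rho$, where $\rho$ is a word of length at most $2P$ assembled from the trailing $P$ symbols $\eta''$ of $\eta'$ together with the subsequent free symbols of $\omega$. I would then expand $f_{\omega|_{\tau_k(\omega)}}$ to first order at its distinguished base point $x_{\sigma^{\tau_k(\omega)}(\omega)}$; by the very definition of $S_{\tau_k(\omega)}$, the derivative there equals $\pm e^{-S_{\tau_k(\omega)}(\omega)}$, and a translation produces only a phase that vanishes under $|\mathcal{F}_q|$, so
$$\mathcal{F}_q(f_{\omega|_{m(\omega)}}\nu) = e^{i\phi}\cdot\mathcal{F}_q\bigl(M_{e^{-S_{\tau_k(\omega)}(\omega)}}\circ f_{\bar{\eta'}}\circ f_\rho\,\nu\bigr) + O(|q|\cdot R_\omega).$$

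The Taylor remainder $R_\omega$ is controlled by two inputs: bounded distortion for $C^2$ conformal systems, giving $\|f''_{\omega|_{\tau_k(\omega)}}\|_\infty = O(e^{-k\chi})$, and property (3) of the statement, which bounds $\mathrm{diam}(\mathrm{supp}(f_{\bar{\eta'}}\circ f_\rho\,\nu))$ by $O(e^{-h'\chi})$. Since the IFS is $C^2$, the derivative $f'_{\omega|_{\tau_k(\omega)}}$ is Hölder of any exponent $\beta \in (0,1)$ on this small support, which yields $R_\omega = O\bigl(e^{-k\chi}\cdot e^{-(1+\beta)h'\chi}\bigr)$ and hence $|q|\cdot R_\omega = O\bigl(|q|\cdot e^{-(k+h')\chi-\beta h'\chi}\bigr)$, matching the second error term. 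Squaring, integrating over $\xi \in \tilde A$, and upper-bounding the resulting weighted cell-average over $\rho$ by the unweighted sum $\sum_{|\rho|\leq 2P}$ (a crude but sufficient bound, since each $p_\rho \leq 1$) then gives the stated inequality.

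The main obstacle I anticipate is the combinatorial bookkeeping in the second paragraph: one must pick $m(\omega)$ and the Taylor base point so that the scaling factor emerging from the Taylor expansion is exactly $e^{-S_{\tau_k(\omega)}(\omega)}$, the residual composition is of the form $f_{\bar{\eta'}}\circ f_\rho$ with $|\rho|\leq 2P$, and property (3) holds with a uniform constant $C'$ independent of $k,h',\eta',\rho$. Once this alignment is fixed and the standard bounded-distortion estimates are invoked, the remaining calculations are routine extensions of the corresponding arguments in \cite[Section 4.2]{algom2020decay}.
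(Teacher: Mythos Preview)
Your proposal is essentially correct and follows the same route as the paper, which in its own proof simply cites \cite[Lemma 4.3, Lemma 4.4, Claim 4.5, and Corollary 4.6]{algom2020decay} together with the orientation-preserving assumption. Your sketch faithfully reconstructs the content of those lemmas: iterate the self-conformal relation to a random depth, apply Cauchy--Schwarz, discard the complement of $\tilde A$ via Theorem \ref{Theorem equid}(i) (giving the $O(k^{-1/4})$ term), linearize the leading block $f_{\omega|_{\tau_k(\omega)}}$ at its distinguished base point, and control the Taylor remainder by bounded distortion combined with the $O(e^{-h'\chi})$ diameter bound from item (3).

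Two small remarks. First, since $\Phi$ is $C^2$ the derivative of $f_{\omega|_{\tau_k}}$ is genuinely Lipschitz (not merely $\beta$-H\"older), so your remainder estimate actually holds with $\beta=1$; the weaker $\beta\in(0,1)$ formulation in the statement is a harmless relic. Second, the index-alignment issue you flag is real: because the cell $A_{k,\eta'}$ is defined via $\sigma^{\tau_k(\omega)-1}(\omega)\in A_{\eta'}$, the symbol $\omega_{\tau_k}=\eta'_1$ is shared between the block you linearize and the word $\eta'$, so the naive factorization $f_{\omega|_{\tau_k}}\circ f_{\bar{\eta'}}\circ f_\rho$ double-counts one map. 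Resolving this off-by-one and simultaneously ensuring that the base point lies in the support of $f_{\bar{\eta'}}\circ f_\rho\,\nu$ is precisely what the cited Claim 4.5 and Corollary 4.6 accomplish; once that alignment is fixed, everything else in your outline goes through as written.
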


\begin{proof}
This is a combination of  \cite[Lemma 4.3, Lemma 4.4, Claim 4.5, and Corollary 4.6]{algom2020decay}, and since $\Phi$ is assumed to be orientation preserving.  
\end{proof}

Now, fix some $\rho$ with $|\rho|\leq 2P$ and consider the corresponding term in Theorem \ref{Theorem lin},
$$ \int_{\xi \in \tilde{A_\eta}}  \mathbb{E}_{\cA_k^{h'}(\xi)}  \left| \mathcal{F}_q \left( M_{e^{-S_{\tau_k(\omega)} (\omega)}}\circ  f_{\bar{\eta'}} \circ f_\rho  \nu \right) \right|^2 \, d \mathbb{P}( \xi).$$
We now appeal to the local limit  Theorem \ref{Theorem equid 2} for every event ${\cA_k^{h'}(\xi)}$   separately. To do this, we notice that by Theorem \ref{Theorem lin}, for every $f_{\bar{\eta'}} \circ f_\rho$ involved 
$$\diam \left( \supp \left( f_{\bar{\eta'}} \circ f_\rho    \nu \right) \right) =O(e^{-h' \chi}).$$
Notice that the  error term in Theorem \ref{Theorem equid 2} is $O\left(\frac{2}{|q| e^{-(k+h')\chi}}+ (|q| e^{-(k+h')\chi})^2 \frac{1}{k^{\delta_0}}\right)$ independently of the event ${\cA_k^{h'}(\xi)}$. So, 
$$ \int_{\xi \in \tilde{A_\eta}}  \mathbb{E}_{\cA_k^{h'}(\xi)} \left| \mathcal{F}_q \left( M_{e^{-S_{\tau_k(\omega)} (\omega)}}\circ f_{\bar{\eta'}} \circ f_\rho   \nu \right) \right|^2 \, d \mathbb{P}( \xi) $$
$$\leq \int_{\xi \in \tilde{A_\eta}}   \int_{k\chi} ^{k\chi +D'} \left| \mathcal{F}_q \left(M_{e^{-x}} \circ f_{\bar{\eta'}} \circ f_\rho    \nu \right) \right|^2 \, d \Gamma_{\cA_k^{h'}(\xi)} (x) \,  d \mathbb{P}( \xi) +O\left(\frac{2}{|q| e^{-(k+h')\chi}}+ (|q| e^{-(k+h')\chi})^2 \frac{1}{k^{\delta_0}}\right)$$
Since this is true for every $\rho$ with $|\rho|\leq 2P$, combining with Theorem \ref{Theorem lin} we see that
$$\left| \mathcal{F}_q (\nu) \right|^2 \leq \sum_{|\rho|\leq 2P} \int_{\xi \in \tilde{A}}   \int_{k\chi} ^{k\chi +D'} \left| \mathcal{F}_q \left(M_{e^{-x}}\circ f_{\bar{\eta'}} \circ f_\rho   \nu \right) \right|^2 d \Gamma_{\cA_k^{h'}(\xi)} (x)   d \mathbb{P}( \xi)  $$
$$+O\left(\frac{2}{|q| e^{-(k+h')\chi}}+ (|q| e^{-(k+h')\chi})^2 \frac{1}{k^{\delta_0}}\right)+ O \left( \frac{1}{k^{\frac{1}{4}}} \right) + O\left(|q|\cdot e^{-(k+h') \chi-\beta\cdot h' \chi}\right).$$

Recall that by Lemma \ref{Lemma Abs. contin of gamma}, the probability measure $\Gamma_{\cA_k^{h'}(\xi)}$ is absolutely continuous with respect to the Lebesgue measure on $[k \chi, k \chi +D']$, such that  the norm of its density function is  uniformly bounded by $\frac{1}{D}>0$ independently of all parameters. Using this fact,  we obtain
$$\left| \mathcal{F}_q (\nu) \right|^2 \leq \sum_{|\rho|\leq 2P} \int_{\xi \in \tilde{A_\eta}} \left(   \int_{k\chi} ^{k\chi +D'} \left| \mathcal{F}_q \left(M_{e^{-z}} \circ f_{\bar{\eta'}} \circ f_\rho \nu \right) \right|^2 \cdot \frac{1}{D} dz   \right) d \mathbb{P}( \xi) $$
$$+O\left(\frac{2}{|q| e^{-(k+h')\chi}}+ (|q| e^{-(k+h')\chi})^2 \frac{1}{k^{\delta_0}}\right)+ O \left( \frac{1}{k^{\frac{1}{4}}} \right) + O\left(|q|\cdot e^{-(k+h') \chi-\beta\cdot h' \chi}\right).$$
This leads us to the last error term, that comes from the sum of the oscillatory integrals as in the equation above:
\begin{Proposition} (Oscillatory integral) \label{Prop osc}
For every $|\rho|\leq 2P, \xi\in \tilde{A}$, and every $r>0$ we have
$$\int_{k\chi} ^{k\chi +D'} \left| \mathcal{F}_q \left(M_{e^{-z}} \circ f_{\bar{\eta'}} \circ f_\rho \nu \right) \right|^2 \cdot \frac{1}{D} dz \leq O \left( \frac 1  {r |q| e^{-(k+h')\chi}} + \sup_y\nu(B_r(y)) \right).$$ 
\end{Proposition}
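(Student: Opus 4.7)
The plan is to expand the squared Fourier transform as a double integral against $\nu \otimes \nu$, swap the order of integration via Fubini, and then treat the resulting inner $z$-integral as a one-dimensional oscillatory integral. Writing $F = f_{\bar{\eta'}} \circ f_\rho$, we have
\begin{equation*}
\int_{k\chi}^{k\chi+D'} \left|\mathcal{F}_q\left(M_{e^{-z}} F\nu\right)\right|^2 dz = \iint \left(\int_{k\chi}^{k\chi+D'} e^{2\pi i q e^{-z}(F(x)-F(y))}\, dz\right) d\nu(x)\, d\nu(y).
\end{equation*}

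For the inner integral I will apply the standard bound for oscillatory integrals with monotone phase. Setting $\psi(z)= 2\pi q e^{-z}(F(x)-F(y))$, the derivative $\psi'(z)$ has constant sign and satisfies $|\psi'(z)| \asymp |q|\, e^{-k\chi}\,|F(x)-F(y)|$ uniformly on $[k\chi, k\chi+D']$, since $e^{-z}$ only varies by a bounded multiplicative factor across this fixed-length interval. A single integration by parts then gives
\begin{equation*}
\left|\int_{k\chi}^{k\chi+D'} e^{i\psi(z)}\, dz\right| \leq \frac{C}{|q|\, e^{-k\chi}\,|F(x)-F(y)|}
\end{equation*}
whenever $F(x)\neq F(y)$. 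Combining this with part (3) of Theorem \ref{Theorem lin}, which yields the bi-Lipschitz estimate $|F(x)-F(y)| \asymp e^{-h'\chi}|x-y|$, the inner factor is at most $O\bigl(1/(|q|\, e^{-(k+h')\chi}\,|x-y|)\bigr)$.

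Now I partition the $(x,y)$-integration by whether $|x-y|<r$ or $|x-y|\geq r$. On the diagonal region I use the trivial bound $|\int e^{i\psi}\,dz|\leq D'$ and estimate the $\nu\otimes\nu$-mass by
\begin{equation*}
\iint_{|x-y|<r} d\nu(x)\, d\nu(y) = \int \nu\bigl(B_r(x)\bigr)\, d\nu(x) \leq \sup_y \nu\bigl(B_r(y)\bigr).
\end{equation*}
On the off-diagonal region, the oscillatory bound of the previous paragraph yields the uniform estimate $O\bigl(1/(r\,|q|\, e^{-(k+h')\chi})\bigr)$, which survives integration against the probability measure $\nu\otimes\nu$. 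Adding the two contributions and absorbing the $1/D$ prefactor into the implicit constant gives the stated inequality.

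The only subtlety is uniformity of the implicit constants over all parameters $k, h', \bar{\eta'}, \rho$ and all $x,y\in I$; this comes for free from the fixed length $D'$ of the $z$-interval together with the uniform constant $C'$ in Theorem \ref{Theorem lin}(3). No finer oscillatory integral tool than one-step integration by parts is required, so I do not anticipate a genuine obstacle — the proof is essentially the same estimate appearing in \cite[Section 4.2]{algom2020decay}, reproduced here for completeness of the parameter dependence.
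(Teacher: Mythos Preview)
Your proof is correct and follows essentially the same approach as the paper: the paper invokes Hochman's Lemma \cite[Lemma 3.2]{Hochman2020Host} (in the form \cite[Lemma 2.6]{algom2020decay}) together with the bi-Lipschitz bound from Theorem \ref{Theorem lin}(3), and what you have written is precisely an inline proof of that lemma applied to the measure $F\nu$ --- expand the square, apply Fubini, bound the inner $z$-integral via first-derivative van der Corput, and split into near/far diagonal. The only cosmetic difference is that you reprove the cited lemma from scratch rather than quoting it.
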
 
\begin{proof}
This follows from Hochman's Lemma \cite[Lemma 3.2]{Hochman2020Host} about the average of the Fourier transform of scaled measures. Here we are using it in the form \cite[Lemma 2.6]{algom2020decay}, and are also making use of the fact that there is a global constant $C'>1$ such that for all $\bar{\eta'}$ and $\rho$ as above,
$$|\left( f_{\bar{\eta'}} \circ f_{\rho} \right) ' (x)| = \Theta_{C'} \left( e^{-h'\chi} \right), \quad \forall x\in I$$
which follows from Theorem \ref{Theorem lin}. See \cite[Pages 41-42]{algom2020decay} for more details.
\end{proof}

\subsection{Conclusion of the proof}
Following the argument in Section \ref{Section collecting error terms}, we bounded $\left| \mathcal{F}_q (\nu) \right|^2$ by the sum of the following terms. Every term  is bounded with dependence on the Bernoulli measure $\mathbb{P}=\mathbf{p}^\mathbb{N}$ and some fixed $\beta \in (0,1)$. For simplicity, we ignore global multiplicative constants, so we omit the big-$O$ notation. Recall that $\delta_0>0$ is as in Theorem \ref{Theorem equid}:

Linearization - Theorem \ref{Theorem lin}:
$$   |q|e^{-(k+h')\chi}e^{-\beta h'\chi};$$

Local limit Theorem  - The discussion in between Theorem \ref{Theorem lin} and Proposition \ref{Prop osc}:
$$ \frac{2}{|q| e^{-(k+h')\chi}}+ (|q| e^{-(k+h')\chi})^2 \frac{1}{k^{\delta_0}};$$

Oscillatory integral: Via Proposition \ref{Prop osc}, for every $r>0$,
$$\frac 1  {r |q| e^{-(k+h')\chi}} + \sup_y\nu(B_r(y)).$$

\noindent{ {\bf Choice of parameters :}}  For $|q|$  large we choose $k=k(|q|)$ and $h'=\sqrt{k}$ such that
$$|q|=  k ^{\frac{\delta_0}{4}} \cdot e^{(k+h')\chi}. $$ 
Fix $r=k^{\frac{-\delta_0}{8}}$. Then we get:

Linearization:
$$   |q|e^{-(k+h')\chi}e^{-\beta h'\chi}= k^{\frac{\delta_0}{4}}\cdot e^{-\beta\sqrt{k}\chi}, \quad  \text{ This decays exponentially fast in } k. $$

Local limit Theorem:
$$ \frac{2}{|q| e^{-(k+h')\chi}}+ (|q| e^{-(k+h')\chi})^2 \frac{1}{k^{\delta_0}} = \frac{2}{k^{\frac{\delta_0}{4}}} + k^{\frac{\delta_0}{2}} \cdot \frac{1}{k^{\delta_0}}, \quad  \text{ This decays polynomially fast in } k. $$

Oscillatory integral: There is some $d=d(\nu)$ such that
$$\frac 1  {r |q| e^{-(k+h')\chi}} + \sup_y\nu(B_r(y)) \leq  \frac{k^{\frac{\delta_0}{8}}}{k^{\frac{\delta_0}{4}}} +  k^{  \frac{-d \cdot \delta_0}{8}}, \quad  \text{ This decays polynomially fast in } k.$$
Here we made use\footnote{We note that although \cite[Proposition 2.2]{feng2009Lau} is stated for self-similar IFSs, essentially the same proof works in the case of  general $C^r$ smooth IFS's.} of \cite[Proposition 2.2]{feng2009Lau}, where it is shown  that there is some $C>0$ such that for every $r>0$ small enough
$$\sup_y\nu(B_r(y)) \leq Cr^d.$$
Finally, by summing these error terms we see that for some $\alpha=\alpha(\nu)>0$ we have $|\mathcal{F}_q (\nu)|=O(\frac{1}{k^\alpha})$. Since as $|q|\rightarrow \infty$ we have $k\geq O(\log |q|)$ our claim follows.
\bibliography{bib}{}
\bibliographystyle{plain}

\end{document}